\let\ol=\overline
\let\t=\tilde
\newcommand{\Aut}{{\rm Aut}}
\newcommand{\Sub}{{\rm Sub}}
\newcommand{\N}{{\mathbb N}}
\newcommand{\Z}{{\mathbb Z}}
\newcommand{\R}{{\mathbb R}}
\newcommand{\Cm}{{\mathbb C}}
\newcommand{\Q}{{\mathbb Q}}
\newcommand{\TT}{\mathbb{T}}
\newcommand{\NN}{\mathcal{N}}
\newcommand{\Id}{{\rm Id}}
\newcommand{\Homeo}{{\rm Homeo}}
\newcommand{\db}{\mathbf{d}}
\newcommand{\Gc}{{\mathcal G}}
\newcommand{\du}{{\rm d}\,}
\newcommand{\inn}{{\rm inn}}
\newcommand{\Ad}{{\rm Ad}}
\newcommand{\GL}{{\rm GL}}
\newcommand{\NC}{{\rm (NC)}}
\newcommand{\rad}{{\rm rad}}
\newcommand{\nil}{{\rm nil}}
\newcommand{\lcm}{{\rm lcm}}
\let\t=\tilde
\let\ol=\overline
\let\Ga=\Gamma
\let\Lam=\Lambda
\let\ga=\gamma
\begin{document}

\title{Dynamics of Actions of Automorphisms of Discrete Groups 
$G$ on Sub$_G$ and Applications to Lattices in Lie Groups
}
\date{}
\author{Rajdip Palit, Manoj B.\ Prajapati  and  Riddhi Shah}

\maketitle

\theoremstyle{plain}
\newtheorem{theorem}{Theorem}[section]
\newtheorem{corollary}[theorem]{Corollary}
\newtheorem{lemma}[theorem]{Lemma}
\newtheorem{proposition}[theorem]{Proposition}
\newtheorem{conjecture}[theorem]{Conjecture}
\newtheorem{hypothesis}[theorem]{Hypothesis}
\newtheorem{condition}[theorem]{Condition}
\newtheorem{fact}[theorem]{Fact}
\newtheorem{problem}[theorem]{Problem}

\theoremstyle{definition}
\newtheorem{definition}[theorem]{Definition}
\newtheorem{notation}[theorem]{Notation}

\theoremstyle{remark}
\newtheorem{remark} [theorem]{Remark}
\newtheorem{remarks}[theorem]{Remarks}
\newtheorem{example}[theorem]{Example}
\newtheorem{examples}[theorem]{Examples}


\begin{abstract}
For a discrete group $G$ and the compact space $\Sub_G$ of (closed) subgroups of $G$ endowed 
with the Chabauty topology, we study the dynamics of actions of automorphisms of $G$ on $\Sub_G$ in terms of distality and 
expansivity. We also study the structure and properties of lattices $\Ga$ in a connected Lie group. In particular, we 
show that the unique maximal solvable normal subgroup of $\Ga$ is polycyclic and the corresponding quotient of $\Ga$ is either 
finite or admits a cofinite subgroup which is a lattice in a connected semisimple Lie group with certain properties. 
We also show that $\Sub^c_\Ga$, the set of cyclic subgroups of $\Ga$, is closed in $\Sub_\Ga$. 
We prove that an infinite discrete group $\Ga$ which is either polycyclic or a lattice in a connected Lie group, does not 
admit any automorphism which acts expansively on $\Sub^c_\Ga$, while only the finite order automorphisms of $\Ga$ act 
distally on $\Sub^c_\Ga$. For an automorphism $T$ of a connected Lie group $G$ and a $T$-invariant lattice $\Ga$ in $G$, 
we compare the behaviour of the actions of $T$ on $\Sub_G$ and $\Sub_\Ga$ in terms of distality. We put certain conditions on 
the structure of the Lie group $G$ under which we show that $T$ acts distally on $\Sub_G$ if and only if it acts distally on $\Sub_\Ga$. 
We construct counter examples to show that this does not hold in general if these conditions on the Lie group are relaxed.  
\end{abstract}
 
  \bigskip
 \noindent{\bf Key words:} Distal and expansive actions of automorphisms, space of closed subgroups, 
 Chabauty topology, polycyclic groups, lattices in connected Lie groups 
 
 \bigskip
 \noindent 2020 Mathematics Subject Classification: Primary 37B05; Secondary 22E40
 
 \section{Introduction}

A homeomorphism $T$ of a (Hausdorff) topological space $X$ is said to be {\it distal} if for every pair of elements 
$x,y\in X$ with $x\ne y$, the closure of $\{(T^n(x), T^n(y))\mid n\in\Z\}$ in $X\times X$ does not intersect the diagonal 
$\{(a,a)\mid a\in X\}$.  If $X$ is compact and metrizable with a metric $d$, then $T$ is distal if and only if given $x,y\in X$ with $x\ne y$, 
$\inf\{d(T^n(x),T^n(y))\mid n\in\Z\}>0$. In case $X$ is a topological group and $T$ is an automorphism, then $T$ is 
distal if and only if the identity of $X$ does not belong to the closure of the $T$-orbit of any nontrivial element in $X$. 
Distal maps on compact spaces were introduced by David Hilbert to study the dynamics of non-ergodic maps. For 
distal actions on compact spaces and on locally compact groups, 
we refer the reader to Abels \cite{Ab78, Ab81}, Ellis \cite{El58}, Furstenberg \cite{Fu63}, Moore \cite{Mo68}, Raja and 
Shah \cite{RS10, RS19}, Shah \cite{Sh10} and the references cited therein.  

For a metrizable topological space $X$ with a metric $d$, a homeomorphism $T$ on $X$ is said to be {\it expansive} if 
there exists $\delta>0$ such that the following holds: If $x,y\in X$ with $x\neq y$, then $d(T^n(x),T^n(y))>\delta$ for some
$n\in\Z$. It is well-known that the expansivity of a homeomorphism on any compact space is independent 
of the metric (cf.\ \cite{Wa82}). Let $G$ be a locally compact Hausdorff group with the identity $e$. An automorphism $T$ of $G$ 
is said to be {\em expansive} if $\cap_{n\in\Z}T^n(U)=\{e\}$ for some neighbourhood $U$ of $e$. 
If $T$ is expansive on $G$, then $G$ is metrizable and the above definition is equivalent to the one given in terms 
of the left invariant metric $d$ on $G$. The notion of expansivity was introduced by Utz \cite{Ut50} and studied 
by many in different contexts (see Bryant \cite{Br60}, Schmidt \cite{Sc95}, Choudhuri and Raja \cite{CR20}, Gl\"ockner and Raja 
\cite{GlR17}, Shah \cite{Sh19} and the references cited therein). It is known that on any compact metric space, the class of 
distal homeomorphisms and that of expansive homeomorphisms are disjoint from each other
\cite{Br60}.
 
Here, we study the dynamics of the actions of automorphisms of certain discrete 
groups $G$ on the compact space of subgroups of $G$ in terms of distality and expansivity. Let $G$ be a locally compact (Hausdorff) 
topological group and let $\Sub_G$ denote the set of all closed subgroups of $G$ equipped with the Chabauty topology \cite{Ch50}. 
Then $\Sub_G$ is compact and Hausdorff. It is metrizable if $G$ is so (see \cite{Ge18} and \S 1 of Ch.\ E in \cite{BP92} for more details). 
Let $\Aut(G)$ denote the group of all automorphisms of $G$. There is a natural action of $\Aut(G)$ on $\Sub_G$; namely, 
$(T,H)\mapsto T(H)$, $T\in\Aut(G)$, $H\in\Sub_G$. For each $T\in\Aut(G)$, the map $H\mapsto T(H)$ defines a homeomorphism
 of $\Sub_G$ (cf.\ \cite{HK14}, Proposition 2.1), and the corresponding map from $\Aut(G)\to\Homeo(\Sub_G)$ is a group 
homomorphism. 

 We say that for a locally compact (metrizable) group $G$, $T\in\Aut(G)$ acts distally (resp.\ expansively) on $\Sub_G$ if 
 the homeomorphism of $\Sub_G$ corresponding to $T$ is distal (resp.\ expansive). The distality of such an action was first studied by 
 Shah and Yadav \cite{SY19} for connected Lie groups and, by Palit and Shah \cite{PaS20} for lattices in certain types of Lie groups. 
 It was shown in \cite{SY19} that if $G$ is such that $G^0$, the connected component of the identity $e$ in $G$, has no nontrivial 
 compact connected central subgroup 
 and if $T$ acts distally on $\Sub_G$, then $T$ is distal. The expansivity of such an action was first studied by Prajapati and 
 Shah \cite{PrS20} for locally compact groups. It was also shown that if $T$ acts expansively on $\Sub_G$, then $T$ is expansive and, 
 $G$ is totally disconnected and it is either finite or noncompact.
 
 Let $\Sub^a_G$ (resp.\ $\Sub^c_G$) denote the space of all closed abelian (resp.\ discrete cyclic) subgroups of $G$. 
 They are invariant under the action of $\Aut(G)$, $\Sub^a_G$ is always closed in $\Sub_G$, while $\Sub^c_G$ is closed for 
 many discrete groups $G$ \cite{PaS20}. In particular, this also holds for any discrete polycyclic group $G$. Here we will show that 
 $\Sub^c_G$ is closed for any lattice $G$ in a connected Lie group. We study distality and expansivity of the actions of 
 automorphisms of a discrete group $G$ on $\Sub^c_G$ when $G$ is discrete and polycyclic or a lattice in a connected Lie group. 
 
 For many groups $G$, the (compact) spaces $\Sub_G$, $\Sub^a_G$ and the closure of $\Sub^c_G$ 
have been identified (see e.g.\ Baik and Clavier \cite{BC13,BC16}, Bridson, de la Harpe and Kleptsyn \cite{BHK09} and 
Pourezza and Hubbard \cite{PH79}). For the 3-dimensional Heisenberg group $\mathbb{H}$, the structure of the space of 
lattices in $\mathbb{H}$ and the action of $\Aut(\mathbb{H})$ on certain subspaces of $\Sub_{\mathbb{H}}$ have 
also been studied in \cite{BHK09}. For a connected Lie group $G$, the space of $G$-invariant measures on $\Sub_G$ and the 
subspace of lattices have also been studied extensively (see Abert et al \cite{Sev17}, Gelander \cite{Ge15} and Gelander and 
Levit \cite{GeL18}). Since the homeomorphisms of $\Sub_G$ arising from the action of $\Aut(G)$ form a 
large subclass of $\Homeo(\Sub_G)$, it is important to study the dynamics of such homeomorphisms of $\Sub_G$.

For a discrete group $\Ga$, which is either polycyclic or a lattice in a connected Lie group, we show that an automorphism of $\Ga$ acts 
 distally on $\Sub^c_\Ga$ if and only if some power of it is the identity map (more generally, see Theorems \ref{poly-d} and  
 \ref{lattice-distal}).
 
  For a lattice $\Ga$ in a connected Lie group $G$, the set $\Sub^c_\Ga$ is usually much smaller than $\Sub^a_G$. In fact, 
  since $\Ga$ is finitely generated, and hence countable, $\Sub^c_\Ga$ is also countable, while $\Sub^a_G$ is not if $G$ is noncompact. 
For $T\in\Aut(G)$ which keeps $\Ga$ invariant, we compare the behaviour of distality of the $T$-actions on $\Sub^c_\Ga$ and  
$\Sub_G$. This was studied in \cite{PaS20} for the cases when $G$ is simply connected nilpotent, simply connected solvable or 
semisimple. We generalise these results of \cite{PaS20} to lattices in connected Lie groups (more generally, see Theorem~\ref{lattice-distal}). 
We also construct counter examples to show that Theorem~\ref{lattice-distal} is the best possible result in this direction. The theorem, in 
particular, shows that for a certain class of connected Lie groups $G$, if an automorphism $T$ of $G$ keeps a lattice $\Ga$ invariant, then 
the distality of the action on $\Sub^c_\Ga$ implies that some power of $T$ is the identity map. 

Some results about distal actions are proven for automorphisms belonging to the class $\NC$ which was introduced in \cite{SY19}. 
For a locally compact metrizable group $G$, 
an automorphism $T\in\Aut(G)$ is said to belong to $\NC$ if for every nontrivial closed cyclic subgroup $A$ of $G$, $T^{n_k}(A)\not\to\{e\}$ 
in $\Sub_G$ for any sequence $\{n_k\}\subset\Z$. The class $\NC$ of automorphisms is studied in details in \cite{SY19} for connected
Lie groups, and in \cite{PaS20} for lattices in certain connected Lie groups. The class $\NC$ is larger than the set of those which 
act distally on $\Sub^a_G$ or the closure of $\Sub^c_G$ as illustrated by Example 3.11 of \cite{PaS20} and Example~\ref{ex1}. 
However, for many groups $G$, it turns out to be the same as the set of those which act distally on $\Sub_G$; see Theorem 4.1 of 
\cite{SY19} and Corollary 3.9 and Theorem 3.16 of \cite{PaS20}, see also Theorem~\ref{lattice-distal}. 

Expansive actions of automorphisms of locally compact (metrizable) group $G$ on $\Sub_G$ are studied in detail in \cite{PrS20}. 
If $G$ is infinite and either compact or connected, then it does not admit automorphisms which act expansively on $\Sub_G$. 
The $p$-adic field $\Q_p$, $p$ a prime, admits automorphisms which act expansively on $\Sub_{\Q_p}$, while the same 
does not hold if $\Q_p$ is replaced by $\Q_p^n$, $n\geq 2$ (Proposition 4.1, \cite{PrS20}). 
Theorem 3.1 of \cite{PrS20} shows that a nontrivial connected Lie group $G$ does not admit any automorphism which acts 
expansively on $\Sub^a_G$. Therefore the question arises whether an automorphism of $G$, which keeps a lattice $\Ga$ invariant, 
could act expansively on $\Sub^a_\Ga$. We show that this also does not happen. More generally, we show that a discrete infinite 
group $\Ga$, which is either polycyclic or a lattice in a connected Lie group, does not admit any automorphism which acts expansively on 
$\Sub^c_\Ga$ (see Theorems \ref{poly-e} and \ref{main-expa}). In particular, Theorem~\ref{main-expa} generalises Theorem 3.1 of 
\cite{PrS20} for the class of automorphisms of a connected Lie group which keep a lattice invariant. 

We get some results on the structure of lattices in connected Lie groups which are useful for the proofs of 
the main results about distal and expansive actions. It is known that any closed subgroup $H$ of a connected Lie group admits a 
unique maximal solvable normal subgroup (say) $H_\rad$ (cf.\ \cite{Ra72}, Corollary 8.6). We show for a lattice 
$\Ga$ in a connected Lie group $G$ that $\Ga_\rad$ is polycyclic and, $\Ga/\Ga_\rad$ is either finite or it admits a subgroup of finite 
index which is a lattice in a connected semisimple Lie group without compact factors and with finite center. Moreover, if $\Ga_\rad$ is finite, 
then $G$ is either compact and abelian or it is an almost direct product of a compact central subgroup and a connected semisimple 
Lie group with finite center; in particular $G$ is compact and abelian or it is reductive (see Proposition~\ref{lattice-rad}).  We also prove an 
elementary but crucial lemma about the structure of $\Sub_G$ for a class of discrete group $G$ with the property that the set of roots 
of $g$ in $G$ is finite for every $g\in G$; the lemma also shows that such a $G$ does not admit any automorphism that acts expansively 
on $\Sub^c_G$ (which is closed) unless it is finite (see Lemma~\ref{nbd}). 

In section 2, we state some basic results and properties of distal and expansive actions. We also describe the topology of $\Sub_G$ 
for locally compact groups $G$. In section 3, we prove some results about the structure of lattices in Lie groups. For the action of an 
automorphism of a discrete group $G$ on $\Sub_G$, where $G$ is either polycyclic or a lattice in a connected Lie group,  
we explore the distality of this action in section 4, while section 5 deals with the study of the expansivity of this action. 

We will assume that all our topological spaces are locally compact, Hausdorff and metrizable. If $G$ is a discrete group or a Lie group,   
then $G$ is metrizable and so is $\Sub_G$. For a topological group $G$ with the 
identity $e$, and a subgroup $H\subset G$, let $H^0$ denote the connected component of the identity $e$ in $H$, $[H, H]$ denote the
commutator subgroup of $H$, $Z(H)$ denote the center of $H$ and let $Z_G(H)$ denote the centraliser of $H$ in $G$. 
Here, $H^0$ and $Z(H)$ are closed characteristic subgroups of $H$, and $[H,H]$ is also characteristic in $H$. 
Also, $Z_G(H)$ is a closed subgroup of $G$ and, it is characteristic in $G$ if $H$ is so. 
In particular, $H^0$, $Z(H)$ and $Z_G(H)$ are normal in $G$, if $H$ is so. For $x\in G$, let $\inn(x)$ denote the 
inner automorphism of $G$ by $x$, i.e.\ $\inn(x)(g)=xgx^{-1}$, $g\in G$, and let $G_x$ denote the cyclic group generated 
by $x\in G$. In case $G_x$ is closed (discrete) in $G$, then $G_x\in\Sub^c_G$. An element $x\in G$ is said to be a torsion element 
if $x^n=e$ for some $n\in\N$. A group $G$ is said to be {\it torsion-free} if it does not have any nontrivial torsion element. 
For any $x\in G$, by convention, $x^0=e$, the identity of the group $G$. Similarly, $T^0=\Id$, 
the identity map, for any bijective map $T$ of a space $X$. For $x\in G$, let $R_x$ denote the set of roots of $x$
in $G$, i.e\ $R_x=\{y\in G\mid y^n=x\mbox{ for some } n\in\N\}$. Note that if $G=\Z^d$, or more generally, if $G$ is a finitely 
generated nilpotent group, then $R_x$ is finite for every $x\in G$ (cf.\ \cite{He77}, Example 3.1.12, Theorems 3.1.13 and 3.1.17).

For a connected Lie group $G$, let $\Gc$ denote the Lie algebra of $G$ and let $\exp:\Gc\to G$ be the exponential map. 
For any $T\in\Aut(G)$, there exists a unique Lie algebra automorphism $\du T:\Gc\to \Gc$ which satisfies 
$\exp(\du T(v))=T(\exp(v))$, $v\in\Gc$. Recall that $\Ad:G\to\GL(\Gc)$, the adjoint representation of $G$ on $\Gc$, is defined as 
$\Ad(g)=\du(\inn(g))$, $g\in G$, and $\Ad(G)$ is a connected Lie subgroup of $\GL(\Gc)$. The radical (resp.\ nilradical) of $G$
is the maximal connected solvable (resp.\ nilpotent) normal subgroup of $G$ and, $G$ is said to be semisimple if its radical is trivial.
A connected Lie group $G$ is said to be reductive if its Lie algebra $\Gc$ is reductive; equivalently, $\Ad(G)$ is semisimple. 
Note that $G$ is reductive if and only if its radical is central in $G$; equivalently, if 
$G$ is an (almost direct) product of a connected semisimple Lie group and $Z(G)$. A connected Lie 
group is said to be linear if it is isomorphic to a subgroup of $\GL(n,\R)$ for some $n\in\N$. We will use certain results 
about the structure of linear groups, Lie groups and Lie algebras which are standard and can be found in any basic 
textbook on Lie groups (see e.g.\ \cite{Ho66, Ho81, Va84}). 

\section{Preliminaries}

 For a (metrizable) topological space $X$, let $\Homeo(X)$ denote the space of all homeomorphisms of $X$. We first state some known 
properties of distal and expansive actions for a compact space $X$. Let $T\in\Homeo(X)$. Then $T^n$ is distal (resp.\ expansive) for some 
$n\in\Z\setminus\{0\}$ if and only if $T^n$ is so for all $n\in\Z\setminus\{0\}$. If $Y\subset X$ is a (nonempty) $T$-invariant 
subspace and if $T$ is distal (resp.\ expansive), then $T|_Y$ is so. If $S\in\Homeo(X)$, then $T$ is distal (resp.\ expansive) 
if and only if $STS^{-1}$ is so. An expansive homeomorphism of $X$ has only finitely many fixed points, and hence, the set of 
its periodic points is countable. If a topological space is discrete, then any homeomorphism is distal as well as 
expansive. The identity map of a space is distal by definition, but it need not be expansive (for example, if the metric space is 
compact and infinite).

Given a locally compact (metrizable) group $G$, the Chabauty topology on $\Sub_G$ was introduced by Chabauty \cite{Ch50}. 
A sub-basis of the Chabauty topology on $\Sub_G$ is given by the sets of the following form 
$O_{K}=\{A\in\Sub_G\mid A\cap K=\emptyset\}$ and $O_{U}=\{A\in\mathrm{ Sub}_G\mid  A\cap U\neq\emptyset\}$, 
where $K$ (resp.\ $U$) is a compact (resp.\ an open) subset of $G$. 

Any closed subgroup of $\R$ is either a discrete group generated by a real number or the whole group $\R$, and 
$\Sub_{\R}$ is homeomorphic to $[0,\infty]$ with a compact topology. 
Any closed subgroup of $\Z$ is of the form $n\Z$ for some $n\in\N\cup\{0\}$, 
and $\Sub_{\Z}$ is homeomorphic to $\{\frac{1}{n}\mid n\in\N\}\cup\{0\}$.
The space $\Sub_{\R^2}$ is homeomorphic to $\mathbb{S}^4$ \cite{PH79}.
Note that the space $\Sub_{\R^n}$ is simply connected for all $n\in\N$ (cf.\ \cite{Kl09}, Theorem 1.3). 

We first state a criterion for convergence of sequences in $\Sub_G$ (cf.\ \cite{BP92}).

For a locally compact first countable (metrizable) group $G$, a sequence $\{H_n\}_{n\in\N}$ in $\Sub_G$ converges to $H$ in 
$\Sub_G$ if and only if the following conditions hold: 
\begin{enumerate}
\item[{(I)}] For any $h\in H$, there exists a sequence $\{h_n\}$ with $h_n\in H_n$, $n\in\N$, such that $h_n\to h$.
\item[{(II)}] For any unbounded sequence $\{n_k\}\subset\N$, if $\{h_{n_k}\}_{k\in\N}$ is such that $h_{n_k}\in H_{n_k}$, $k\in\N$, 
and $h_{n_k}\to h$, then $h\in H$.
\end{enumerate}

In case $G$ is discrete, we know from Lemma 3.2 of \cite{PaS20} that $H_n\to H$ in $\Sub_G$ if and only if 
$H=\cup_{n=1}^\infty\cap_{k=n}^\infty H_k$. In particular, (for such a group $G$) if $H_n\to H$, then $h\in H$ if and only if 
$h\in H_n$ for all large $n$. We will use these criteria for convergence for discrete groups frequently. Also, for a discrete group 
$G$, if each $H_n$ is cyclic and $H_n\to H$, then $H$ is an increasing union of cyclic groups, in particular, one can replace $H_n$ 
by $H'_n=\cap_{k=n}^\infty H_k$ and assume that $H_n\subset H_{n+1}$. 

It is easy to see that $\Sub^a_G$, the set of all closed abelian subgroups of $G$, is closed in $\Sub_G$, but the same need not be 
true for $\Sub^c_G$, the set of discrete cyclic subgroups; e.g.\ $G=\R$. Even if $G$ is discrete, $\Sub^c_G$
need not be closed, e.g.\ $G$ is the group consisting of all roots of unity in the unit circle, endowed with the discrete topology, and
for a prime $p$, the groups $H_n$ of $p^n$th roots of unity are cyclic, $n\in\N$, but $H_n\to\cup_{n\in\N}H_n$, which is not cyclic. 
From now on, for a group $G$, when we say that $\Sub^c_G$ is closed, we mean that $\Sub^c_G$ is closed in $\Sub_G$.
In \cite{PaS20}, for a discrete group $G$, various conditions for $\Sub^c_G$ to be closed are discussed.  
 We now state and prove an elementary lemma which gives one more useful condition involving quotient groups.

\begin{lemma} \label{cyc-sg} Let $G$ be a discrete group and let $H$ be any normal subgroup of $G$. If 
$\Sub^c_H$ and $\Sub^c_{G/H}$ are closed, then $\Sub^c_G$ is closed.  In particular, if $H$ has finite index in $G$ and 
$\Sub^c_H$ is closed, then $\Sub^c_G$ is closed.
\end{lemma}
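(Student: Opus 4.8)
The plan is to show that whenever a sequence of cyclic subgroups converges in $\Sub_G$, its limit is again cyclic. So let $\{C_n\}\subset\Sub^c_G$ with $C_n\to L$ in $\Sub_G$. By the convergence criterion for discrete groups quoted above, I may replace $C_n$ by $C'_n=\cap_{k\ge n}C_k$; each $C'_n$ is a subgroup of the cyclic group $C_n$, hence cyclic, the $C'_n$ increase, and they still converge to $L=\cup_n C'_n$. Thus I assume from the outset that $C_1\subseteq C_2\subseteq\cdots$ with $L=\cup_n C_n$. The crucial structural observation is that $L$ is then \emph{locally cyclic}: any finite subset of $L$ lies in some $C_m$, so the subgroup it generates sits inside a cyclic group and is therefore cyclic. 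In particular $L$ is abelian, and every finitely generated subgroup of $L$ is cyclic.

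Next I would push this chain through the two maps attached to $H$. Restricting to $H$: since each $C_n$ is cyclic, $C_n\cap H$ is cyclic, these intersections increase, and $\cup_n(C_n\cap H)=L\cap H$, so $C_n\cap H\to L\cap H$ in $\Sub_H$; as $\Sub^c_H$ is closed, $L\cap H$ is cyclic. Projecting to the quotient: let $\pi\colon G\to G/H$ be the canonical map. Then each $\pi(C_n)$ is cyclic, the images increase, and $\cup_n\pi(C_n)=\pi(L)$, so $\pi(C_n)\to\pi(L)$ in $\Sub_{G/H}$; since $\Sub^c_{G/H}$ is closed, $\pi(L)=LH/H\cong L/(L\cap H)$ is cyclic. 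All three convergence statements are immediate from the discrete criterion $H_n\to H \iff H=\cup_m\cap_{k\ge m}H_k$, which is purely set-theoretic and hence insensitive to whether the ambient group is $G$, $H$ or $G/H$.

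It remains to assemble these two facts. Writing $L\cap H=\langle b\rangle$ and choosing $a\in L$ whose image generates the cyclic group $L/(L\cap H)$, every element of $L$ differs from a power of $a$ by an element of $\langle b\rangle$, so $L=\langle a,b\rangle$ is finitely generated. Combined with the local cyclicity established above, $L$ is a finitely generated locally cyclic group and is therefore cyclic, as required. I regard this last assembly step as the real point of the argument: an abstract extension of a cyclic group by a cyclic group need not be cyclic (for instance $\Z^2$), so it is essential that $L$ carries the extra rigidity of being locally cyclic, which alone upgrades ``finitely generated'' to ``cyclic.''

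For the final ``in particular'' clause I would argue directly, without assuming $H$ normal. With $L=\cup_n C_n$ an increasing union of cyclic groups as above, $C_n\cap H$ is cyclic and increases to $L\cap H$ in $\Sub_H$, so $L\cap H$ is cyclic because $\Sub^c_H$ is closed; moreover $[L:L\cap H]\le[G:H]<\infty$. Hence $L$ has a cyclic subgroup of finite index, is in particular finitely generated, and so by local cyclicity is cyclic. Alternatively this case follows from the main statement applied to the normal core $N=\cap_{g\in G}gHg^{-1}$, which is normal of finite index in $G$ and contained in $H$: the quotient $G/N$ is finite, so $\Sub^c_{G/N}$ is trivially closed, and $\Sub^c_N$ is closed because, by the ambient-insensitivity of the discrete convergence criterion, closedness of $\Sub^c_H$ passes to the subgroup $N$.
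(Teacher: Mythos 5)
Your proposal is correct, and its skeleton coincides with the paper's: reduce to an increasing chain $L=\cup_n C_n$ of cyclic subgroups via $C'_n=\cap_{k\ge n}C_k$, then feed the chain through the intersection with $H$ and the projection to $G/H$, using closedness of $\Sub^c_H$ and $\Sub^c_{G/H}$ to conclude that $L\cap H$ and $\pi(L)\cong L/(L\cap H)$ are cyclic. Where you diverge is the reassembly. The paper exploits a stabilization phenomenon: since $\pi(L)=\cup_n\pi(C_n)$ is cyclic, its generator lies in some $\pi(C_n)$, whence $\pi(L)=\pi(C_n)$ and so $L=C_n(L\cap H)$ for all large $n$; similarly $L\cap H=C_n\cap H$ eventually, giving $L=C_n$ outright. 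You instead observe once and for all that $L$ is locally cyclic, deduce from the two cyclic pieces that $L=\langle a,b\rangle$ is finitely generated, and let local cyclicity upgrade finite generation to cyclicity. Both assemblies are valid; yours isolates the abstract mechanism (a cyclic-by-cyclic extension need not be cyclic, but a finitely generated locally cyclic group is), while the paper's yields the slightly stronger conclusion that the chain stabilizes, $L=C_n$ for large $n$. Your treatment of the ``in particular'' clause also differs: your direct argument via $[L:L\cap H]\le[G:H]$ needs no normality of $H$, a modest strengthening, whereas the paper simply notes that $\Sub_{G/H}$ is finite and quotes the first part (normality being a standing hypothesis of the lemma anyway); your normal-core alternative, including the observation that closedness of $\Sub^c_H$ passes to subgroups of $H$, is likewise sound.
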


\begin{proof} If $G$ is finite, then $\Sub_G$ is finite and discrete and hence $\Sub^c_G$ is closed. 
Suppose $G$ is not finite. If $H=\{e\}$ or $H=G$, then the assertions are obvious. 

Now suppose $H$ is a proper subgroup of $G$. Let $\psi: G\to G/H$ be the natural projection. 
Suppose $\Sub^c_H$ and $\Sub^c_{G/H}$ are closed. Let $x_n\in G$, $n\in\N$, be such that $G_{x_n}\to L$ for some 
$L\subset G$.  By Lemma 3.2 of \cite{PaS20}, $L=\cup_n G_n$, 
where $G_n=\cap_{k=n}^\infty G_{x_k}$. In particular, $L$ is an increasing union of cyclic groups $G_n$. Then 
$\psi(L)=\cup_n\psi(G_n)$, an increasing union of cyclic groups, and hence $\psi(G_n)\to \psi(L)$ in $\Sub_{G/H}$. 
As $\Sub^c_{G/H}$ is closed, $\psi(L)$ is cyclic, and hence $\psi(L)=\psi(G_n)$ for all large $n$. 
Therefore, $LH=G_nH$, and hence $L=G_n(L\cap H)$ for all large $n$. Since $L\cap H=\cup_n(G_n\cap H)$, 
$G_n\cap H\to L\cap H$ in 
$\Sub_H$. As each $G_n\cap H$ is cyclic and $\Sub^c_H$ is closed, we have that $L\cap H$ is cyclic, and hence 
that $G_n\cap H=L\cap H$ for all large $n$.
Therefore, we get that $L=G_n$ for all large $n$, and $L$ is cyclic. This implies that $\Sub^c_G$ is closed. 

If $G/H$ is finite, then so is $\Sub_{G/H}$. Therefore, the second statement follows easily from the first.  
\end{proof}

For a locally compact group $G$, recall that there is a natural group action of $\Aut(G)$, the group of automorphisms of $G$, on 
$\Sub_G$ defined as follows:
$$\Aut(G)\times\Sub_G\to\Sub_G;\ (T,H)\mapsto T(H), T\in\Aut(G), H\in\Sub_G.$$
The map $H\mapsto T(H)$ is a homeomorphism of $\Sub_G$ for each $T\in\Aut(G)$ (cf.\ \cite{HK14}, Proposition 2.1), 
and the corresponding map from $\Aut(G)$ to $\Homeo(\Sub_G)$ is a homomorphism. 

If $T$ is an automorphism of $G$ and $H$ is a closed normal 
$T$-invariant subgroup, then $T$ acts distally (resp.\ expansively) on $\Sub_G$ implies that $T$ acts distally (resp.\ expansively) 
on both $\Sub_H$ and $\Sub_{G/H}$ (see Lemma 3.1 in \cite{SY19} and Lemma 2.3 in \cite{PrS20}). However, 
the converse is not true as illustrated by Example 3.2 in \cite{SY19} and Example 4.1 in \cite{PrS20}. In fact, Example 3.2 in \cite{SY19} 
shows that for $G=\R^2$ and a subgroup $H=\R$, $T|_H=\Id$ and $T$ acts trivially on $G/H$, but $T$ does not act distally on 
$\Sub_G$. However, the following elementary but useful lemma shows that in such a case $T$ acts trivially on $G/Z(H)$, and 
in particular that $T=\Id$ if $Z(H)=\{e\}$. We include a short proof for the sake of completeness. 

\begin{lemma} \label{zgh}
Let $G$ be a locally compact group and let $H$ be a closed normal subgroup. Let $T\in\Aut(G)$ be such that $T|_H=\Id$. Then
$T$ acts trivially on $G/Z_G(H)$. In particular, if $T$ acts trivially on $G/H$, then $T$ acts trivially on $G/Z(H)$. 
\end{lemma}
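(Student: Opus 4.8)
The plan is to reduce the statement to a single commutator identity. To speak of $T$ acting on $G/Z_G(H)$, I first need $Z_G(H)$ to be $T$-invariant, so I would begin by checking this directly: for $g\in Z_G(H)$ and $h\in H$, writing $h=T(h)$ (as $T|_H=\Id$) gives $T(g)hT(g)^{-1}=T(g)T(h)T(g)^{-1}=T(ghg^{-1})=T(h)=h$, so $T(g)\in Z_G(H)$. Hence $T$ descends to an automorphism of $G/Z_G(H)$, and proving that this induced map is trivial amounts to showing $g^{-1}T(g)\in Z_G(H)$ for every $g\in G$.

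The core computation is then the following. Fix $g\in G$ and $h\in H$. Since $H$ is normal, $ghg^{-1}\in H$, and because $T$ fixes $H$ pointwise we have both $T(h)=h$ and $T(ghg^{-1})=ghg^{-1}$. Expanding the latter, $T(ghg^{-1})=T(g)T(h)T(g)^{-1}=T(g)hT(g)^{-1}$, whence $T(g)hT(g)^{-1}=ghg^{-1}$. Rearranging gives $(g^{-1}T(g))\,h\,(g^{-1}T(g))^{-1}=h$, and as $h\in H$ was arbitrary this says exactly that $g^{-1}T(g)$ centralises $H$, i.e.\ $g^{-1}T(g)\in Z_G(H)$. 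Thus $T(g)Z_G(H)=gZ_G(H)$ for all $g$, which is the first assertion.

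For the ``in particular'' statement I would use $Z(H)=H\cap Z_G(H)$ together with the extra hypothesis. Note that $Z(H)$ is fixed pointwise by $T$ (it lies in $H$), so $T$ induces a map on $G/Z(H)$. If $T$ acts trivially on $G/H$, then $g^{-1}T(g)\in H$ for every $g\in G$; combined with $g^{-1}T(g)\in Z_G(H)$ from the first part, we obtain $g^{-1}T(g)\in H\cap Z_G(H)=Z(H)$, so $T$ acts trivially on $G/Z(H)$. There is no real obstacle in this argument: the whole content is the elementary observation that $T$ fixing $H$ pointwise forces $g^{-1}T(g)$ to conjugate $H$ in the same way $g$ does, and the only point demanding a line of care is the verification that $Z_G(H)$ and $Z(H)$ are $T$-invariant so that the quotient maps are well defined.
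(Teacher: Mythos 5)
Your proposal is correct and follows essentially the same argument as the paper: the identity $T(ghg^{-1})=T(g)hT(g)^{-1}=ghg^{-1}$ shows $g^{-1}T(g)\in Z_G(H)$, and the ``in particular'' part follows from $H\cap Z_G(H)=Z(H)$. Your extra verification that $Z_G(H)$ and $Z(H)$ are $T$-invariant is a reasonable point of care (the paper leaves it implicit, and it also follows automatically once $g^{-1}T(g)\in Z_G(H)$ is known), but it does not change the substance of the argument.
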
 

\begin{proof} Let $g\in G$ and $h\in H$. As $H$ is normal and $T|_H=\Id$, we have that $ghg^{-1}=T(ghg^{-1})=T(g)hT(g^{-1})$. 
Therefore, $g^{-1}T(g)$, centralises every $h\in H$. This implies that $T(g)\in gZ_G(H)$. Moreover, if $T$ acts trivially on $G/H$, then
$T(g)\in g(H\cap Z_G(H))=gZ(H)$, $g\in G$, i.e.\ $T$ acts trivially on $G/Z(H)$.
 \end{proof}

\section{Some properties of lattices in connected Lie groups}

In this section we discuss the structure of lattices in connected Lie groups and prove some useful results. 
Recall that a discrete subgroup $\Gamma$ of a locally compact group $G$ is a lattice in $G$ if $G/\Ga$ carries a 
finite $G$-invariant measure. It is shown by Mostow that any lattice in a connected solvable Lie group $G$ is 
co-compact in $G$ (cf.\ \cite{Ra72}). If $G$ is a simply connected nilpotent Lie group which admits a lattice, then any automorphism 
of the lattice extends to a unique automorphism of $G$ (see Theorem 2.11 and Corollary 1 following it in \cite{Ra72}). 
In general, a lattice need not be co-compact and an automorphism of a lattice need not extend (uniquely) to an automorphism 
of the ambient group. If a connected Lie group admits a lattice then it is unimodular. However, there are connected unimodular Lie 
groups which do not admit a lattice (see e.g.\ Theorem 2.12 and Remark 2.14 in \cite{Ra72}). Any lattice in a connected Lie group
is finitely generated. We now define polycyclic groups, many of which arise as lattices in solvable Lie groups. 

A group $G$ is {\it polycyclic} if it admits a sequence $G=G_{0}\supset G_{1}\cdots\supset G_{k}=\{e\}$ of subgroups such that 
each $G_{i+1}$ is normal in $G_{i}$ and $G_{i}/G_{i+1}$ is cyclic, $0\leq i\leq k-1$. Moreover, if $G_{i}/G_{i+1}$ is infinite, 
$0\leq i\leq k-1$, then $G$ is said to be {\it strongly polycyclic}.

Polycyclic groups are finitely generated and solvable. Every infinite polycyclic group admits a subgroup of finite index which is 
strongly polycyclic. Also, every subgroup of a polycyclic (resp.\ strongly polycyclic) group is polycyclic (resp.\ strongly polycyclic) 
and hence it is finitely 
generated. It is easy to see that a group is polycyclic if and only if it is solvable and every subgroup of it is finitely generated. In particular,
a finitely generated nilpotent group is polycyclic since all its subgroups are finitely generated. 
 It follows from Corollary 3.9 of \cite{Ra72} that any discrete subgroup of a connected solvable Lie group is polycyclic. 
 The following lemma extends this to any discrete solvable subgroup of a connected Lie group. The lemma is known for subgroups 
 of connected solvable Lie groups (cf.\ \cite{Ra72}). It may also be known in general, but we give a proof for the sake of 
 completeness. 

\begin{lemma} \label{solvable-Lie} Let $H$ be a closed solvable subgroup of a connected Lie group. Then 
the following hold:
\begin{enumerate}
\item[{$(1)$}] $H$ is compactly generated,
\item[{$(2)$}] $H/H^0$ is polycyclic and 
\item[{$(3)$}] $H$ admits a normal subgroup $L$ of finite index such that $[L,L]$ is nilpotent.
\end{enumerate}
\end{lemma}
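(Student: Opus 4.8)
The plan is to dispose of the connected part of $H$ by standard Lie theory and to reduce everything else to control of the component group $H/H^0$, which is the crux. Write $G$ for the ambient connected Lie group. Since $H$ is a closed subgroup of $G$, Cartan's closed subgroup theorem makes $H$ a Lie group, so its identity component $H^0$ is a connected solvable Lie group that is open, closed and characteristic in $H$; in particular $H/H^0$ is a countable discrete solvable group. Two standard facts about $H^0$ are used repeatedly: being connected it is compactly generated, and its commutator subgroup $[H^0,H^0]$ is nilpotent (the derived ideal $[\mathfrak h,\mathfrak h]$ of the solvable Lie algebra $\mathfrak h$ of $H^0$ is nilpotent). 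These already give the ``connected half'' of statements $(1)$ and $(3)$.

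The main work is to prove $(2)$, that $H/H^0$ is polycyclic, and this is where I expect the difficulty. The known case, Corollary 3.9 of \cite{Ra72}, is that a discrete solvable subgroup of a connected \emph{solvable} Lie group is polycyclic, so the goal is to reduce to it. I would pass to the adjoint representation $\Ad_G\colon G\to\GL(\Gc)$, whose kernel is the centre $Z(G)$, and consider $\Ad_G(H)$, a solvable subgroup of $\GL(\Gc)$. Its real Zariski closure $A$ is a solvable real algebraic group, hence has only finitely many connected components; thus $A^0$ is a connected solvable Lie group and $\Ad_G(H)\cap A^0$ has finite index in $\Ad_G(H)$. The plan is to show that, modulo the central kernel $H\cap Z(G)=\ker(\Ad_G|_H)$, the group $H/H^0$ is a finite extension of a discrete subgroup of the connected solvable Lie group $A^0$, and then to invoke Corollary 3.9 of \cite{Ra72}. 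Equivalently, one may use the purely group-theoretic criterion that a solvable group all of whose abelian subgroups are finitely generated is polycyclic, and verify this criterion using the finite dimensionality of $G$.

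The hard part is exactly this control of $H/H^0$, and discreteness is essential to it: the metabelian group $\langle a,t\mid tat^{-1}=a^2\rangle$ has non-finitely-generated abelian derived subgroup $\cong\Z[1/2]$ and is not polycyclic, and correspondingly it cannot occur as a discrete subgroup of a Lie group. The genuine obstacles are therefore $(i)$ ruling out such non-finitely-generated abelian sections using only the finite-dimensional geometry, and $(ii)$ disposing of the centre $Z(G)$, a closed abelian subgroup that must be handled separately, and of the possibly disconnected ambient quotient; here the finiteness of the number of components of the Zariski closure $A$ is the key input. Once $(2)$ is in hand, $(1)$ is immediate: $H^0$ is compactly generated and $H/H^0$, being polycyclic, is finitely generated, so $H$ is generated by the union of a compact generating set of $H^0$ with finitely many coset representatives of generators of $H/H^0$.

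Finally, for $(3)$ the plan is cleaner and independent of the reduction above. I would apply Mal'cev's theorem on solvable linear groups to $\Ad_G(H)\subseteq\GL(\Gc)$: it has a normal subgroup of finite index whose image is triangularisable over $\Cm$. Let $L$ be the full preimage in $H$ of that subgroup, a normal subgroup of finite index with $\Ad_G(L)$ triangularisable. Then $\Ad_G([L,L])=[\Ad_G(L),\Ad_G(L)]$ lies in the group of unitriangular matrices and is therefore nilpotent. Since $\ker(\Ad_G|_H)=H\cap Z(G)$ is central in $G$, it is central in $[L,L]$; thus $[L,L]$ is a central extension of the nilpotent group $\Ad_G([L,L])$, and a central extension of a nilpotent group is nilpotent. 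Hence $[L,L]$ is nilpotent, which gives $(3)$.
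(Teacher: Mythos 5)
Your proof of $(3)$ is correct and is essentially the paper's own argument in light disguise: the paper also passes to a linear quotient with kernel $Z(G)$, finds a normal subgroup $L$ of finite index whose commutator subgroup has nilpotent image (via the Zariski closure and Raghunathan's Proposition 3.11 rather than Mal'cev triangularizability), and then uses exactly your observation that a central extension of a nilpotent group is nilpotent. Your variant is fine, and for $(3)$ it even avoids needing the linear image to be closed. The derivation $(2)\implies(1)$ you give is also valid as far as it goes.

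The genuine gap is $(2)$, and since your $(1)$ rests on $(2)$, both are unproved. For $(2)$ you offer only a plan, and you yourself name the two obstacles --- realizing $H/H^0$ inside a connected solvable Lie group, and handling $Z(G)$ --- without resolving either. As stated the plan would fail: $H/H^0$ is a \emph{quotient} of $H$, not a subgroup of anything, and $\Ad_G(H)$ need not be closed in $\GL(\Gc)$, so neither $H/H^0$ nor any finite-index subgroup of it is exhibited as a discrete subgroup of $A^0$; Corollary 3.9 of Raghunathan therefore has nothing to act on. The paper goes the opposite way: it proves compact generation $(1)$ \emph{first}, by noting that $L=H\cap(\t H)^0$ is a \emph{closed subgroup} of the connected solvable Lie group $(\t H)^0$ (identity component of the Zariski closure), so Proposition 3.8 of Raghunathan applies directly to $L$ and gives $L/L^0$ finitely generated; and for non-linear $G$ it supplies precisely the missing ingredients your sketch waves at --- a representation with kernel $Z(G)$ and \emph{closed} image (Dani--Margulis), Moskowitz's theorem that closed subgroups of the compactly generated abelian group $Z(G)$ are compactly generated, and a chain of reductions through $\ol{HZ}$, the abelian case, and the nilpotent case via Dani's maximal-compact-subgroup lemma, needed because $\pi(H)$ need not be closed and compact generation does not lift along $\pi$ for free. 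Once $(1)$ is known for \emph{all} closed solvable subgroups of $G$, $(2)$ is immediate: every subgroup of $H/H^0$ is a quotient of a subgroup of $H$ containing the open subgroup $H^0$, hence of a closed solvable subgroup of $G$, hence is finitely generated, and a solvable group all of whose subgroups are finitely generated is polycyclic. So the group-theoretic criterion you mention at the end is indeed the right tool, but invoking it requires $(1)$ in full strength, which is exactly what your proposal leaves unproved.
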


\begin{proof} Let $G$ be a connected Lie group containing $H$. Note that $(1)\implies (2)$ as we have that if $H$ is compactly generated,
then $H/H^0$ is a discrete finitely generated solvable group and, every subgroup of it is also finitely generated since it is a 
quotient of a closed solvable subgroup of $G$. Therefore, $H/H^0$ is polycyclic. Now we prove (1) and  (3). 

 Suppose $G$ is a closed linear Lie group, i.e.\  $G$ is a closed subgroup of $\GL(n,\R)$ for some $n\in\N$.
Let $\t H$ be the Zariski closure of $H$ in $\GL(n,\R)$. Then $\t H$ is solvable and it has finitely many connected components. Hence 
$L=H\cap (\t H)^0$ is a normal subgroup of finite index in $H$ and it is contained in $(\t H)^0$. By Proposition 3.8 of \cite{Ra72}, 
$L/L^0$ is finitely generated, and hence $L$ is compactly generated. This implies that $H$ is compactly generated as $H/L$ is finite. 
Moreover, $[L,L]\subset [(\t H)^0, (\t H)^0]$ which is nilpotent (cf.\  \cite{Ra72}, Proposition 3.11). That is, (1) and (3) hold in this case.  

Let $G$ be any connected Lie group. It is well-known that the center of $G$ is compactly generated, and so are its closed subgroups 
(cf.\ \cite{Mos67}, Theorems 2.4 and 2.6). If $H$ is central in $G$, then $H$ is compactly generated and $(1)$ holds. Now suppose 
$H$ is not central in $G$. Let $Z=:Z(G)$, the center of $G$, and let $\pi: G\to G/Z$ be the natural projection. Then $\pi:G\to \GL(V)$, 
where $V$ is a real vector space isomorphic to the Lie algebra of $G$. 
Given such a representation, there exists a representation $\pi':G\to \GL(V')$ over a suitable vector space $V'$, such that $\ker\pi' 
=\ker\pi=Z$ and $\pi'(G)$ is closed in $\GL(V')$ (see \cite{DM93}, the second paragraph of the proof of Proposition 2.1). 
Therefore, without loss of any generality we may assume that $\pi(G)$ is closed. Then $\ol{\pi(H)}$ is a closed 
solvable subgroup of a closed connected linear group $\pi(G)$. As shown above, $\ol{\pi(H)}$ is compactly generated and 
it has a normal subgroup $L'$ of finite index such that $[L',L']$ is nilpotent. Let 
$L=\pi^{-1}(L')\cap H$. Then $L$ is a normal subgroup of finite index in $H$ and $[L, L]$ is nilpotent, 
since $[\pi(L),\pi(L)]$ is so and $\ker\pi=Z$ is central in $G$. Therefore,
(3) holds. 

To prove (1), it is enough to show that $L$ as above is compactly generated.  Therefore, 
we may replace $H$ by $L$ and assume that $[H,H]$ is nilpotent. 
Let $H'=\ol{HZ}$. Then $H'$ is compactly generated, since $\pi(H')$ and $\ker\pi$ are so. If $H$ is abelian, then so is $H'$, and hence
$H$ itself is compactly generated (cf.\ \cite{Mos67}, Theorem 2.6). Let $H_1=\ol{[H,H]}$ and $H'_1=\ol{[H',H']}$. Then $H'_1=H_1$, and hence 
$H/H_1$ is a closed subgroup of $H'/H_1$ and the later is abelian and compactly generated. Therefore, $H/H_1$ is compactly generated. 
It is enough to show that $H_1$ is compactly generated. Therefore, we may assume that $H$ is nilpotent. Here, $H$ is a closed 
subgroup of $H'=\ol{HZ}$ which is compactly generated and nilpotent. Now it is enough to prove that every subgroup of $H'$ is 
compactly generated. By Lemma 3.1 of \cite{Da10}, $H'$ has a unique maximal compact subgroup (say) $K$. 
Then $H'/K$ is a compactly generated torsion-free nilpotent Lie group and it embeds in a simply connected nilpotent Lie group as a 
closed co-compact subgroup (cf.\ \cite{Ra72}). Therefore, $HK/K$, and hence $H$ is compactly generated. That is, (1) holds. 
\end{proof}

Any closed subgroup $H$ of a connected Lie group admits a unique maximal solvable normal subgroup (cf.\ \cite{Ra72}, Corollary 8.6) 
and we denote it by $H_\rad$. Note that $H_\rad$ is closed and characteristic in $H$. In particular, a lattice $\Ga$ in a connected Lie group 
$G$ admits a unique maximal solvable normal subgroup $\Gamma_\rad$. Moreover, $\Ga_\rad$ is polycyclic by Lemma \ref{solvable-Lie}. 
The following proposition about certain properties of lattices in connected Lie groups will be useful. If $G$ 
is a connected solvable Lie group, then the statements $(b)$ and $(c)$ of the proposition are easy to show. The 
statement $(d)$ below may be known. 

\begin{proposition} \label{lattice-rad} Let $\Gamma$ be a lattice in a connected Lie group $G$. Then 
the following statements hold:
\begin{enumerate}
\item[{$(a)$}] The unique maximal solvable normal subgroup $\Gamma_\rad$ of $\Ga$ is polycyclic. 
\item[{$(b)$}] There exists a unique maximal nilpotent normal subgroup $\Ga_\nil$ in $\Ga$. If $\Ga_\nil$ is finite, then $\Ga_\rad$
is finite. 
\item[{$(c)$}] If $\Gamma_\rad$ is finite, then the following hold: The radical of $G$ is compact and central in $G$. The group $G$ is 
either compact and abelian, or $G$ is reductive and it is an almost direct product of a compact group and a semisimple group with finite 
center.  Moreover, $G$ admits a finite central subgroup $F$ such that $G/F$ is linear. 
\item[{$(d)$}] If $G$ is semisimple and has no compact factors, then $\Gamma_\rad=Z(\Ga)\subset Z(G)$ and it is a subgroup of
finite index in $Z(G)$.  
\end{enumerate}
\end{proposition}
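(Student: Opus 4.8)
The plan is to prove the four statements in the order $(a)$, $(b)$, $(d)$, $(c)$, since the semisimple case $(d)$ is used in the proof of $(c)$.

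Statement $(a)$ is immediate: as $\Ga$ is discrete, $\Ga_\rad$ is a closed solvable subgroup of $G$ with $(\Ga_\rad)^0=\{e\}$, so Lemma~\ref{solvable-Lie}$(2)$ applied to $\Ga_\rad$ gives that $\Ga_\rad=\Ga_\rad/(\Ga_\rad)^0$ is polycyclic. For the existence part of $(b)$ I would observe that any nilpotent normal subgroup of $\Ga$ is solvable and hence lies in $\Ga_\rad$, so it is a nilpotent normal subgroup of the polycyclic group $\Ga_\rad$. As polycyclic groups satisfy the maximal condition and, by Fitting's theorem, the product of two nilpotent normal subgroups is nilpotent, $\Ga_\rad$ possesses a unique maximal nilpotent normal subgroup $\Ga_\nil$ (its Fitting subgroup); being characteristic in $\Ga_\rad$ it is characteristic in $\Ga$ and contains every nilpotent normal subgroup of $\Ga$. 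For the implication in $(b)$ I argue contrapositively. If $\Ga_\rad$ is infinite, pick a torsion-free normal subgroup $S_0$ of finite index in $\Ga_\rad$ (polycyclic groups are virtually torsion-free; replace a torsion-free finite-index subgroup by its normal core). Then $S_0$ is infinite and the last nontrivial term of its derived series is a nontrivial torsion-free finitely generated abelian, hence free abelian and infinite, subgroup; it is characteristic in $S_0$ and so normal in $\Ga_\rad$, and being abelian it lies in $\Ga_\nil$. Thus $\Ga_\nil$ is infinite, so finiteness of $\Ga_\nil$ forces $\Ga_\rad$ to be finite.

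For $(d)$, suppose $G$ is semisimple without compact factors; then $Z(G)$ is discrete and $\Ad(G)$ is a linear, centre-free semisimple group without compact factors, in which $\Ad(\Ga)$ is Zariski dense by the Borel density theorem (cf.\ \cite{Ra72}). Now $Z(\Ga)$ is abelian and normal, so $Z(\Ga)\subset\Ga_\rad$. Conversely, $\Ad(\Ga_\rad)$ is a solvable normal subgroup of $\Ad(\Ga)$, so its Zariski closure is solvable and is normalised by $\Ad(G)$; the identity component is then a connected solvable normal subgroup of the semisimple group $\Ad(G)$ and hence trivial, so $\Ad(\Ga_\rad)$ is finite. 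Being a finite normal subgroup of the Zariski-dense group $\Ad(\Ga)$, it is centralised by $\Ad(G)$ and therefore central; as $\Ad(G)$ is centre-free, $\Ad(\Ga_\rad)=\{e\}$, i.e.\ $\Ga_\rad\subset Z(G)$. Together with the chain $Z(\Ga)\subset\Ga_\rad\subset\Ga\cap Z(G)\subset Z(\Ga)$ this yields $\Ga_\rad=Z(\Ga)=\Ga\cap Z(G)$. Finally, were this subgroup of infinite index in the discrete central group $Z(G)$, a resulting noncompact central direction would force $G/\Ga$ to have infinite volume, contrary to $\Ga$ being a lattice; hence it has finite index in $Z(G)$.

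The core of the proposition is $(c)$; let $R$ and $N$ denote the radical and nilradical of $G$. Since $\Ga\cap R$ is solvable and normal in $\Ga$, it lies in $\Ga_\rad$ and is therefore finite. \emph{The main obstacle is to pass from this to the compactness of $R$.} One cannot argue that $\Ga\cap R$ is a lattice in $R$, for this fails when $G/R$ has compact factors (a ``skew'' copy of $\Z$ inside $\R\times SU(2)$ meets the radical $\R$ trivially), and it is exactly the hypothesis $\Ga_\rad$ finite that rules out such examples. I would instead work with the amenable radical $M$ of $G$, the maximal connected normal amenable subgroup; it is characteristic, contains $R$, and $G/M$ is semisimple without compact factors. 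The key structural input, which I regard as the crux of the whole argument, is that $\Ga\cap M$ is a lattice in $M$ (this is where the absence of compact factors in $G/M$, via Borel density and the Mostow--Auslander structure theory of lattices, cf.\ \cite{Ra72}, is essential). Granting it, $\Ga\cap M$ is a lattice in a connected amenable Lie group and is therefore virtually polycyclic; it is normal in $\Ga$, so if it were infinite its solvable radical would be an infinite solvable normal subgroup of $\Ga$, contradicting the finiteness of $\Ga_\rad$. Hence $\Ga\cap M$ is finite, so $M$, having a finite lattice, is compact, and in particular $R\subset M$ is compact.

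Once $R$ is known to be compact the remaining assertions of $(c)$ are structural. A compact connected solvable Lie group is a torus, and a connected compact normal subgroup of the connected group $G$ is centralised by $G$ (the automorphism group of a torus is discrete), so $R$ is a central torus; by the characterisation recalled in the introduction, centrality of $R$ means $G$ is reductive and $R=Z(G)^0$. Write $G=R\cdot[G,G]$ as an almost direct product with $[G,G]$ semisimple, and set $M=R\cdot S_c$, where $S_c$ is the product of the compact simple factors of $[G,G]$; then $M$ is compact, $\Ga\cap M$ is finite, and $\Ga$ projects to a lattice $\ol\Ga$ in the semisimple group $G/M$ without compact factors. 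If $Z(G/M)$ were infinite, then by $(d)$ the group $\ol\Ga_\rad$ would be infinite, and its preimage $P$ in $\Ga$—finite-by-solvable and normal—would contain the infinite solvable normal subgroup $Z_P(\Ga\cap M)=P\cap Z_\Ga(\Ga\cap M)$ of $\Ga$, contradicting the finiteness of $\Ga_\rad$; hence $Z(G/M)$, and therefore $Z([G,G])$, is finite. Consequently either $[G,G]=\{e\}$, in which case $G=R$ is a torus and, as $\Ga_\rad=\Ga$ is then finite, $G$ is compact and abelian; or $G$ is reductive and the almost direct product of the compact group $R$ and the semisimple group $[G,G]$ of finite centre. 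For the last assertion, $F:=Z([G,G])=Z(G)\cap[G,G]$ is finite and central, and the homomorphism $g\mapsto(\Ad(g),\,g[G,G])$ has kernel $F$ and maps $G/F$ into $\Ad(G)\times(G/[G,G])$, a product of the linear adjoint group and the torus $G/[G,G]$; hence $G/F$ is linear.
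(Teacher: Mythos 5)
Parts $(a)$ and $(b)$ are correct and essentially the paper's own argument (the paper quotes Raghunathan for the unique maximal nilpotent normal subgroup of a polycyclic group where you invoke the maximal condition plus Fitting's theorem; your derived-series argument for the second half of $(b)$ is a harmless variant). Part $(d)$ is also the paper's proof, transported from $\psi(G)=G/Z(G)$ to $\Ad(G)$. The one soft spot there is the final claim that $Z(\Ga)$ has finite index in $Z(G)$: your ``noncompact central direction forces infinite volume'' heuristic is not a proof, since $Z(G)$ is discrete and an infinite-index intermediate subgroup by itself contradicts nothing (think of $\Z\subset\Z+\sqrt{2}\,\Z\subset\R$). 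The actual content, which the paper cites (Raghunathan, Theorem 5.17), is that $Z(G)\Ga$ is \emph{discrete}; finite index then follows because a discrete subgroup containing a lattice is a lattice containing it with finite index. Similarly, in the first half of $(c)$ your route via the amenable radical $M$ is genuinely different from the paper's (which quotients by the maximal compact connected normal subgroup $K$ and applies Corollary 8.28 of Raghunathan there, in a two-case analysis), but your two key inputs --- that $\Ga\cap M$ is a lattice in $M$, and that lattices in connected amenable Lie groups are virtually polycyclic --- are cited rather than proved; the first is exactly the statement the paper's case analysis establishes by hand, and the second needs the Tits alternative together with Lemma~\ref{solvable-Lie}. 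Granting these, your argument up to compactness and centrality of $R$ is sound.

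The genuine gap is in the second half of $(c)$, at the step ``hence $Z(G/M)$, and therefore $Z([G,G])$, is finite.'' Since $[G,G]$ maps onto $G/M$ with kernel $[G,G]\cap M$, finiteness of $Z(G/M)$ only bounds $Z([G,G])$ modulo $Z([G,G])\cap M$, and nothing you have proved controls that intersection. The hidden assumption is that the Levi subgroup $[G,G]$ is closed in $G$ and meets the compact group $M$ in a finite set; but $[G,G]$ is a priori only an integral (immersed) subgroup, whose center is discrete in its \emph{intrinsic} topology and can wind densely inside the central torus $R$. Concretely, take $G=(\widetilde{\mathrm{SL}_2(\R)}\times\TT)/D$ with $D=\{(z^n,e^{in\alpha})\mid n\in\Z\}$, where $z$ generates $Z(\widetilde{\mathrm{SL}_2(\R)})$ and $\alpha$ is an irrational multiple of $2\pi$. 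Here $R=M$ is a central torus and $G/M\cong\mathrm{PSL}_2(\R)$ has trivial center, yet $[G,G]$ is the non-closed image of $\widetilde{\mathrm{SL}_2(\R)}$ and $Z([G,G])\cong\Z$ is infinite and dense in $R$. Moreover, lifting a free noncocompact lattice $\Lam\subset\mathrm{PSL}_2(\R)$ through a homomorphic section into $\widetilde{\mathrm{SL}_2(\R)}$ produces a lattice $\Ga\cong\Lam$ in this $G$ with $\Ga_\rad=\{e\}$. So your step genuinely fails, and with it the conclusions built on it: this $G$ is not an almost direct product of a compact group and a semisimple group with finite center, and no $G/F$ with $F$ finite central is linear (linearity would force $[G,G]$ to be closed).

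You should also know that this is not a defect of your route alone: the paper's own proof negotiates exactly this point by invoking Ragozin's theorem to conclude that $K\cap S'$ is compact, but Ragozin gives closedness of $K\cap S'$ in the intrinsic topology of the possibly non-closed Levi subgroup $S'$, not closedness in $G$; in the example above $K\cap S'\cong\Z$ is not compact. In other words, the finiteness-of-center and linearity assertions in $(c)$ cannot be reached either along your lines or the paper's without an additional hypothesis ruling out non-closed Levi subgroups, so the discrepancy you should flag is a defect in the statement and in both proofs, not something a local repair of your write-up can fix.
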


\begin{proof} 
\noindent{$(a):$} This follows from Lemma~\ref{solvable-Lie}\,(2). \\

\noindent{$(b):$} Any nilpotent normal subgroup of $\Ga$ is contained in $\Ga_\rad$. As $\Ga_\rad$ is polycyclic, 
by Corollary 2 to Lemma 4.7 in \cite{Ra72}, $\Ga_\rad$ has a 
unique maximal nilpotent normal subgroup, which is also the unique maximal nilpotent normal subgroup of $\Ga$. 

Now suppose $\Ga_\nil$ is finite. We show that $\Ga_\rad$ is also finite. If possible suppose $\Ga_\rad$ is infinite. 
Since $\Ga_\rad$ is polycyclic, it has a normal subgroup $\Ga'$ of finite index which is strongly polycyclic and 
$[\Ga',\Ga']$ is nilpotent (see Lemma 4.6 and Corollary 4.11 of \cite{Ra72}). Since $[\Ga',\Ga']\subset \Ga_\nil$ which is finite, and the 
former is torsion-free, we get that $[\Ga',\Ga']$ is trivial and hence $\Ga'$ is abelian. Therefore, $\Ga'\subset\Ga_\nil$ is finite. 
Since $\Ga'$ is torsion-free, it is trivial. This implies that $\Ga_\rad$ is finite. \\

\noindent{$(c):$} Now suppose $\Ga_\rad$ is finite. We first show that the radical $R$ of $G$ is compact. 
Suppose $G$ is solvable. Then $G=R$ and 
$\Ga=\Ga_\rad$, and it is finite. By Theorem 3.1 of \cite{Ra72}, $G$ is compact. Now $G$ is abelian (cf.\  \cite{Iw49}, Lemma 2.2). 
In this case, $G$ itself is linear (see Theorem 3.2 of ChXVIII in \cite{Ho66}). Now suppose $G$ is not solvable. 
Let $G=SR$ be a Levi decomposition, where $S$ is a Levi subgroup of $G$. Note that $S$ is a connected semisimple 
Lie subgroup. 

Let $K$ be the maximal compact normal subgroup of $G$. Suppose $K^0$ is trivial. Then we show that $S$ does not admit a 
nontrivial compact factor which centralises $R$. If possible, suppose $S$ has a nontrivial compact factor (say) $C$ which 
centralises $R$. As $C$ is normal in $S$ and it centralises $R$, it is normal in $G$. 
This implies that $C\subset K^0$, which leads to a contradiction. Therefore, $S$ does 
not have any compact factor centralising the radical $R$. By Corollary 8.28 of \cite{Ra72}, $\Ga\cap R$ is a lattice in $R$ and 
it is normal in $\Ga$. Therefore, $\Ga\cap R\subset \Ga_\rad$, and hence it is finite. This implies that $R$ is compact since 
$\Ga\cap R$ is co-compact in $R$ (cf.\  \cite{Ra72}, Theorem 3.1). 

Suppose $K^0$ is nontrivial. As $K$ is compact and normal in $G$, $\Gamma\cap K$ is a finite normal subgroup of $\Ga$. Let 
$\Delta:=Z_\Ga(\Ga\cap K)$. Then $\Delta$ is a normal subgroup of finite index in $\Ga$, and hence it is a 
lattice in $G$. Moreover, $\Delta\cap K$ is finite and central in $\Delta$. From $(a)$, we get that $\Delta$ admits a unique 
maximal solvable normal subgroup $\Delta_\rad$, which is characteristic in $\Delta$, and hence normal in $\Ga$. Therefore, 
$\Delta_\rad\subset \Ga_\rad$ and hence it is finite. Let $\pi:G\to G/K$ be the natural projection. 
Then $R':=\pi(R)$ is the radical of $\pi(G)$. Since $\pi(G)$ has no nontrivial compact normal subgroup,
$\pi(\Delta)\cap R'$ is a lattice in $R'$ and it is normal in $\pi(\Delta)$. As $\ker\pi\cap\Delta$ is central in $\Delta$, we get 
that $\Delta\cap \pi^{-1}(R')=\Delta\cap KR$ is a solvable normal subgroup of $\Delta$, and hence it is finite. Now 
$\pi(\Delta)\cap R'$ is finite, and being a lattice in $R'$, it is cocompact in $R'$. Therefore, $R'$ is compact. Since $\pi(G)$ 
has no nontrivial compact normal subgroups, $R'=\pi(R)$ is trivial, and hence $R\subset K$ and it is compact. 

Now $R$ is compact, and by Lemma 2.2 of \cite{Iw49}, $R$ is abelian. As $R$ is normal in $G$, 
and the latter is connected, by Theorem 4 of \cite{Iw49}, $R$ is central in $G$. 
For the Lie algebra $\Gc$ of $G$ and $\Ad:G\to\GL(\Gc)$, we have that 
$\Ad(G)=\Ad(S)$ is semisimple and $G$ is reductive. Also, $G$ is an almost direct product of 
$S$ and $R$, and $S\cap R$ is central in $S$ as well as in $R$, as the latter is abelian. Therefore, $S\cap R$ is 
central in $G$. Let $S=K'S'$, where $K'$ 
is a product of all compact factors of $S$ and $S'$ is a connected semisimple Lie group without compact factors. Then $K'R\subset K^0$.
Since $K\cap S'$ is normal in $S'$, it is closed in $S'$ (\cite{Rag72}). Therefore, $K\cap S'$ is compact, and hence it is finite and 
central in $S'$ (as $S'$ has no compact factors). In particular, $K'R=K^0$ and $G=KS'=K^0S'$, an almost direct product. 

Now we show that the center of $\pi(S')$ is finite, where $\pi:G\to G/K$ as above. This would also imply that $Z(S')$ is finite 
and also that $Z(S)$ is finite. We know that $\Delta\cap K\subset Z(\Delta)$ and 
$\pi(\Delta)$ is a lattice in $\pi(G)=\pi(S')$. Therefore, $\pi(\Delta)_\rad=\pi(\Delta_\rad)\subset \pi(\Ga_\rad)$ is finite, and hence 
$Z(\pi(\Delta))$ is finite. Since $\pi(S')$ is a semisimple group without compact factors, 
by Corollary 5.18 of \cite{Ra72}, $Z(\pi(\Delta))\subset Z(\pi(S'))$. Moreover, by Theorem 5.17 of \cite{Ra72}, $Z(\pi(S'))\pi(\Delta)$ is
discrete, and hence $Z(\pi(\Delta))$ is a subgroup of finite index in $Z(\pi(S'))$. This implies that $Z(\pi(S'))$ is finite. 
As $\ker\pi\cap S'=K\cap S'$ is a normal subgroup of $S'$, we get that it is closed in $S'$ \cite{Rag72}. 
Therefore, it is a compact normal subgroup of $S'$ and, since $S'$ has no compact factors, it is finite, 
This, together with the fact that $\pi(Z(S'))\subset Z(\pi(S'))$ is finite, imply that $Z(S')$ is finite. Therefore, $Z(S)$ is finite. 
For $\Ad:G\to\GL(\Gc)$ as above. $\Ad(S)$ is closed in $\GL(\Gc)$ and it is isomorphic to $S/(S\cap Z(G))$. 
Let $F=S\cap Z(G)$. Then $F$ is finite and $S/F$, being isomorphic to $\Ad(S)$, is linear. Now the radical of 
$G/F$ is compact and abelian, and the Levi subgroup of $G/F$ is linear. Hence by Theorems 3.2 and 4.2 of ChXVIII in \cite{Ho66}, 
$G/F$ is linear. \\

\noindent{$(d):$} Let $G$ be a semisimple group without compact factors. As observed above, $Z(\Ga)\subset Z(G)$ and it
is a subgroup of finite index in $Z(G)$ (cf.\ \cite{Ra72}, Theorem 5.17 and Corollary 5.18). Let $\psi: G\to G/Z(G)$ be the 
natural projection, where $Z(G)$, the center of $G$, is discrete. Then for some $n\in\N$, $\psi(G)$ is a closed subgroup of $\GL(n,\R)$ 
which is almost algebraic (i.e.\ $\psi(G)$ is a subgroup of finite index in an algebraic subgroup of 
$\GL(n,\R)$), and $\psi(G)$ has trivial center. By Theorem 5.17 of \cite{Ra72}, $Z(G)\Ga$ is closed, and hence 
$\psi(\Ga)$ is a lattice in $\psi(G)$. 
Let $L$ be the Zariski closure of $\psi(\Ga_\rad)$ in $\GL(n,\R)$. Then $L$ has finitely many connected 
components and, $L^0\subset \psi(G)$ as $\psi(G)$ is almost algebraic. Moreover, $\psi(\Ga)$ normalises $L$, 
and hence it normalises $L^0$. As $G$ has no compact factors, $\psi(\Gamma)$ is Zariski dense in $\psi(G)$.  
Hence, the preceding assertion implies that $\psi(G)$ normalises both $L$ and $L^0$. Now $L^0$ is a connected  
solvable normal subgroup of $\psi(G)$ and the latter is semisimple. Therefore, $L^0$ is trivial, and hence $L$ as well as 
$L\cap\psi(G)$ are finite. Since $L\cap\psi(G)$ is also normal in $\psi(G)$, it is central in $\psi(G)$. Therefore
$L\cap\psi(G)$ is trivial, hence we get that $\Ga_\rad\subset Z(G)$, and that $\Ga_\rad=Z(\Ga)$ is a subgroup of finite 
index in $Z(G)$. 
\end{proof}

The following proposition about certain aspects of the structure of lattices in connected Lie groups will be very useful in 
proving the main results about expansivity and distality. 

\begin{proposition} \label {quo-poly}
Let $\Ga$ be a lattice in a connected Lie group $G$ and let $\Ga_\rad$ be the unique maximal solvable normal subgroup of $\Ga$. 
Then the following hold:
\begin{enumerate}
\item[{$(1)$}] Either $\Ga/\Ga_\rad$ is finite or $\Ga$ admits a normal subgroup $\Lambda$ of finite index such that 
$\Ga_\rad\subset\Lam$ and $\Lam/\Gamma_\rad$ is a lattice in a connected semisimple Lie group $G'$ with finite center and 
without compact factors, where $G'$ is a quotient group of $G$.  
\item[{$(2)$}] $\Ga/\Ga_\rad$ admits a subgroup $\Ga'$ of finite index which is torsion-free and it has the property that the set 
$R_g$ of roots of $g$ in $\Ga'$ is finite for all $g\in \Ga'$. 
\end{enumerate}
\end{proposition}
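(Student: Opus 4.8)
The plan is to prove the two parts in sequence, using Proposition~\ref{lattice-rad} and the projection theorems for lattices (Raghunathan's Corollaries 8.27--8.28 and Theorem 5.17/Corollary 5.18, which are already invoked in the proof of Proposition~\ref{lattice-rad}) as the main external inputs, and then to deduce $(2)$ from $(1)$ together with Selberg's lemma.

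For $(1)$, I would first dispose of the case when $\Ga/\Ga_\rad$ is finite (in particular when $G$ is solvable, since then the discrete $\Ga$ is solvable, so $\Ga=\Ga_\rad$ is polycyclic by Lemma~\ref{solvable-Lie}); hence assume $\Ga/\Ga_\rad$ is infinite. The idea is to produce a quotient homomorphism $q\colon G\to G'$ onto a connected semisimple Lie group $G'$ without compact factors and with finite center, such that the image of a suitable finite-index subgroup of $\Ga$ is a lattice in $G'$ with kernel exactly $\Ga_\rad$. To construct $G'$, let $R$ be the radical of $G$ and let $G=SR$ be a Levi decomposition with $S=K'S'$, where $K'$ is the product of the compact factors of $S$ and $S'$ has no compact factors. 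Using $[R,S]\subset R$ one checks that $N:=RK'$ is a closed normal subgroup, so $G/N\cong S/(S\cap RK')$ is connected semisimple without compact factors; I would then quotient further by the relevant part of the (discrete) center to reach a group $G'$ with finite center, folding that central subgroup into $N$ to obtain the final kernel $N'$ and the projection $q\colon G\to G'=G/N'$.

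The substantive work is to show that $q(\Ga)$ is, up to finite index, a lattice in $G'$ and that the kernel matches $\Ga_\rad$. By the cited projection results, after passing to a normal finite-index subgroup $\Lam\supseteq\Ga_\rad$ of $\Ga$ one obtains that $q(\Lam)$ is a lattice in $G'$ and $\Lam\cap N'$ is a lattice in $N'$. The equality $\Lam\cap N'=\Ga_\rad$ is the delicate point: since $N'$ is amenable (solvable times compact-semisimple times central) rather than solvable, $\Lam\cap N'$ is only virtually solvable a priori. Here I would use that $\Lam\cap K'$ is finite (discrete meets compact) and, exactly as in the proof of Proposition~\ref{lattice-rad}$(c)$, replace $\Lam$ by the centraliser of this finite normal subgroup, and then invoke Proposition~\ref{lattice-rad}$(d)$ applied to the lattice $q(\Lam)$ in $G'$ (so that its maximal solvable normal subgroup is its finite center $\subset Z(G')$) to pin down that the kernel of $q|_\Lam$ is precisely $\Ga_\rad$. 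This yields $\Lam/\Ga_\rad\cong q(\Lam)$, a lattice in $G'$, giving $(1)$. I expect the simultaneous control of the compact factors $K'$ (which forces the passage to $\Lam$) and of the center (which forces the extra central quotient) to be the main obstacle.

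For $(2)$, if $\Ga/\Ga_\rad$ is finite the trivial subgroup works, so assume it is infinite and let $\Lam/\Ga_\rad$ be the finite-index lattice in $G'$ furnished by $(1)$. Since $G'$ has finite center, the adjoint group $G'/Z(G')=\Ad(G')\subset\GL(\Gc')$ is linear, and $\Lam/\Ga_\rad$ maps to it with finite (central) kernel; its image is finitely generated and linear, so by Selberg's lemma it has a torsion-free subgroup of finite index, and using the residual finiteness of the lattice to avoid the finitely many nontrivial elements of the finite central torsion one obtains a torsion-free subgroup $\Ga'$ of finite index in $\Ga/\Ga_\rad$. Being torsion-free, $\Ga'$ meets the finite kernel of the adjoint map trivially, so $\Ga'$ embeds as a discrete torsion-free subgroup of $\GL(\Gc')$. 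It remains to prove that $R_g$ is finite for every $g\in\Ga'$: for $g=e$ torsion-freeness gives $R_g=\{e\}$, while for $g\ne e$ every root $y$ of $g$ commutes with $g=y^{m}$ and hence lies in $Z_{\Ga'}(g)=\Ga'\cap Z_{G'}(g)$; I would then argue that an infinite family of such roots must accumulate in the Lie group $Z_{G'}(g)$ (its accumulation points being elements whose eigenvalues in the linear realisation are all roots of unity), contradicting the discreteness and torsion-freeness of $\Ga'$. Carrying out this last finiteness argument carefully is the technical crux of $(2)$, the subtlety being that distinct roots of $g$ need not commute with one another, so the accumulation must be analysed inside $Z_{G'}(g)$ rather than in a single abelian subgroup.
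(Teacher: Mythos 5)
Your strategy for $(1)$ is essentially the paper's: kill the radical, the compact Levi factors and a suitable central subgroup, then identify $\Lam/\Ga_\rad$ with the projected lattice, using Raghunathan's projection theorems, the centraliser trick from Proposition~\ref{lattice-rad}\,$(c)$, and Proposition~\ref{lattice-rad}\,$(d)$. But the step you compress into ``by the cited projection results'' is exactly where the content lies, and as stated it does not go through. Corollary 8.28 of \cite{Ra72} requires the absence of compact factors (centralising the radical), which fails precisely when $K'\ne\{e\}$; in that situation $\Ga R/R$ need not even be discrete in $G/R$ (take $G={\rm SO}(3)\times\R\times{\rm SL}_2(\R)$ and $\Ga$ generated by $(\rho,1,e)$, $\rho$ an irrational rotation, together with a lattice of ${\rm SL}_2(\R)$), so you can neither factor $q$ through $G/R$ nor quote 8.28 for the one-shot quotient $G\to G/N'$. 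Moreover, your centraliser trick is applied to $\Lam\cap K'$, but $K'$, the compact part of a Levi subgroup, is \emph{not} normal in $G$ (only $RK'$ is), so $\Lam\cap K'$ need not be normal in $\Lam$ and the argument of Proposition~\ref{lattice-rad}\,$(c)$ does not launch. The paper's Steps 3--5 exist to repair exactly these two points: it first quotients by the maximal compact connected \emph{normal} subgroup $K$ (always lattice-safe), then applies 8.28 inside $G/K$, where its hypothesis is verified, then quotients the semisimple $G/KR$ by its compact factors, which \emph{are} normal there, and only then removes the central part $\pi_2(\Lam_\rad)$ using Theorem 5.17 of \cite{Ra72}. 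So for $(1)$ your construction of $G'$ and your kernel identification are the right ones, but the claim that $q(\Lam)$ is a lattice is asserted rather than proved, and it cannot be obtained from the results you cite without the staged quotients.

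For $(2)$ the gap is more substantive. The paper obtains root-finiteness by combining Selberg's lemma with Lemma 3.13 of \cite{PaS20}; you replace that citation by a direct argument whose central claim --- that an infinite family of roots of $g$ in $\Ga'$ ``must accumulate'' in $Z_{G'}(g)$ --- is unjustified. If $y^m=g$, only the eigenvalue \emph{moduli} of $y$ are controlled (they tend to $1$ as $m\to\infty$); the roots themselves can escape to infinity inside the noncompact centraliser $Z_{G'}(g)$, for instance roots whose semisimple parts are finite-order elliptic elements conjugated by an unbounded sequence in $Z_{G'}(g)$. Discreteness and torsion-freeness of $\Ga'$ alone do not exclude this, so no contradiction is reached. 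What kills this mechanism is passing to a \emph{net} subgroup of finite index (Corollary 17.7 of \cite{Bo19}, used for a similar purpose in the proof of Theorem~\ref{lattice-distal}): netness forces the finite-order semisimple parts of roots to be trivial, after which the roots lie along a one-parameter picture and genuinely accumulate at the identity. As written, the crux of your $(2)$ is missing; either quote Lemma 3.13 of \cite{PaS20} as the paper does, or carry out the net-subgroup argument.
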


\begin{proof} {\bf $(1):$} {\bf Step 1:} Suppose $G$ is a connected semisimple Lie group without compact factors. By 
Proposition~\ref{lattice-rad}\,$(d)$, $\Ga_\rad=Z(\Ga)\subset Z(G)$ and it is a subgroup of finite index in $Z(G)$. 
Then $G/\Ga_\rad$ is a connected semisimple Lie group without compact
factors. Moreover, it has finite center, since $Z(G/\Ga_\rad)=Z(G)/\Ga_\rad$ (the latter statement follows from the fact that the 
center of any connected semisimple Lie group is discrete). Let $G'=G/\Ga_\rad$. Then $\Ga/\Ga_\rad$ is a lattice in $G'$. \\

\noindent{\bf Step 2:} We now note a useful general statement: For any closed normal subgroup 
$H$ of $G$ and a natural projection $\rho:G\to G/H$, if  $\rho(\Ga)$ is a lattice in $\rho(G)$ and $\Ga\cap H$ is solvable, then 
$\rho(\Ga_\rad)=\rho(\Ga)_\rad$. One way inclusion $\rho(\Ga_\rad)\subset\rho(\Ga)_\rad$ is obvious. The equality follows
from the facts that $\Ga\cap H$, the kernel of $\rho|_\Ga$, is solvable and normal, which in turn implies that the inverse image of 
any solvable normal subgroup of $G/H$ under $\rho|_\Ga$ is solvable and normal in $\Ga$. \\

\noindent{\bf Step 3:} Let $G$ be any connected Lie group. If $G$ is solvable, then so is $\Ga$, and hence $\Ga=\Ga_\rad$ and (1) holds. 
Let $K$ be the unique maximal compact connected normal subgroup of $G$ 
and let $\Ga_1=Z_\Ga(\Ga\cap K)$. Then $\Ga_1$ is a normal subgroup of finite index in $\Ga$ and, 
$\Ga_1\cap K$ is central in $\Ga_1$. Hence $\Ga_1\cap K$ is normal in $\Ga$ and it is contained in $\Ga_\rad$. Let 
$\Ga_2 =\Ga_1\Ga_\rad$ and let $F=\Ga_2\cap K$. We claim that $F$ is solvable. 

Let $\chi:\Ga_2\to\Ga_2/\Ga_1$ be the natural projection.
Then $\chi(\Ga_2)=\chi(\Ga_\rad)$ is solvable. Since $\ker\chi\cap F=\Ga_1\cap K$ is abelian, it follows that $F$ is solvable. 
Replacing $\Ga_1$ by $\Ga_2$ we have that $\Ga_\rad\subset\Ga_1$ and $\Ga_1\cap K$ is a finite solvable group 
which is normal in $\Ga$, and contained in $\Ga_\rad$. Moreover, $(\Ga_1)_\rad$, being characteristic 
in $\Ga_1$, is normal in $\Ga$. Therefore, $(\Ga_1)_\rad=\Ga_\rad$. 

Let $\pi:G\to G/K$ be the natural projection. Since $\ker\pi\cap \Ga_1=K\cap \Ga_1$ is solvable, it follows from Step 2 
that $\pi(\Ga_\rad)=\pi(\Ga)_\rad$. We also have that $(\Ga_1\cap K)_\rad=\Ga_\rad\cap K$. 
Observe that $\pi(R)$ is the radical of $G/K$. Since $G/K$ has no nontrivial compact normal subgroup, it follows from 
Corollary 8.28 of \cite{Ra72} that $\pi(\Ga_1)\cap \pi(R)$ is a lattice in $\pi(R)$. Since $\pi(\Ga_1)\cap\pi(R)$ is solvable and 
normal in $\pi(\Ga)$, we get that $\pi(\Ga_1)\cap\pi(R)\subset \pi(\Ga_\rad)$ and 
$\pi^{-1}(\pi(\Ga_1)\cap\pi(R))\cap\Ga_1\subset \Ga_\rad$. \\

\noindent{\bf Step 4:} Let $\pi_1:G\to G/KR$ be the natural projection, where $R$ is the radical of $G$. Since $G/K$ 
has no nontrivial compact normal subgroup, arguing as in the proof of Proposition~\ref{lattice-rad}\,$(c)$, we get from 
Corollary 8.28 of \cite{Ra72} that $\pi_1(\Ga_1)$ is a lattice in $G/KR$ which is semisimple. 
Let $K'$ be the product of all compact (simple) factors of $G/KR$ (we choose $K'$ to be trivial if $G/KR$ has no compact factors). 
Then $K'$ is normal in $\pi_1(G)$. Moreover, $\pi_1(\Ga)\cap K'$ is finite. Arguing as in Step 3, we get that $\pi_1(\Ga)$, and 
hence $\Ga$ has a normal subgroup 
$\Lam_1$ of finite index such that $\pi_1(\Lam_1)\cap K'$ is central in $\pi_1(\Lam_1)$. Let $\Lam=\Lam_1\cap\Ga_1$. Then $\Lam$ is
normal in $\Ga$ as well as in $\Ga_1$. As $\pi_1(\Ga_\rad)\subset \pi_1(\Ga)_\rad$, arguing as in Step 3, we get that 
$\pi_1(\Lam\Ga_\rad)\cap K'$ is a solvable normal subgroup of $\pi_1(\Ga)$. 

We know from the latter part of Step 3 that $\pi^{-1}(\pi(\Ga_1)\cap \pi(R))\cap\Ga_1\subset\Ga_\rad$, i.e. 
$(\Ga_1\cap RK)K\cap \Ga_1=
(\Ga_1\cap KR)(K\cap \Ga_1)\subset\Ga_\rad$. Therefore, $\Ga_1\cap KR\subset \Ga_\rad$. This implies in particular that 
$\ker\pi_1\cap\Lam\Ga_\rad\subset KR\cap \Ga_1\subset\Ga_\rad$ and $\ker\pi_1\cap\Lam\Ga_\rad$ is a solvable normal subgroup 
in $\Ga$ as well as in $\Ga_1$. 
Replacing $\Lam$ by $\Lam\Ga_\rad$ and arguing as in Step 3, we get that $\Lam\cap K$ (resp.\ $\pi_1(\Lam)\cap K'$) is a solvable 
normal subgroup of $\Ga$ as well as of $\Ga\cap K$ (resp.\ $\pi_1(\Ga)$ as well as of $\pi_1(\Ga)\cap K'$). \\

\noindent{\bf Step 5:} Note that $\Lam$ is a lattice in $G$, it is normal in $\Ga$ and $\Ga_\rad\subset\Lam$, and hence 
$\Ga_\rad=\Lam_\rad$. 
As observed above, $\Lam\cap\ker\pi_1=\Lam\cap KR$ is solvable, hence it follows from Step 2 that $\pi_1(\Lam_\rad)=\pi_1(\Lam)_\rad$. 
Also, $\pi_1(\Lam)\cap K'$, being solvable and normal in $\pi_1(\Ga)$, is contained in $\pi_1(\Lam_\rad)$. 
Therefore, by Step 2, we have that $\pi_1^{-1}(K')\cap\Lam\subset\Lam_\rad$. 
Let $L=\pi_1^{-1}(K')$. Then $L$ is a closed normal subgroup of $G$, $KR\subset L$, 
$L/R$ is compact. Moreover, either $G=L$ or $G/L$ is a connected semisimple Lie group without compact factors. We also have that
$L\cap\Lam\subset\Lam_\rad$ which is solvable. 

If $G=L$, then $\Lam$ is solvable, $\Lam=\Ga_\rad$ and $\Ga/\Ga_\rad$ is finite and (1) holds in this case. 
Now suppose $G\ne L$. Let $\pi_2:G\to G/L$ be the natural projection. 
Since $\pi_2(G)=\pi_1(G)/K'$ and $\pi_1(\Lam)$ is a lattice in $\pi_1(G)$, we get that both $\pi_2(\Ga)$ and 
$\pi_2(\Lam)$ are lattices in $\pi_2(G)$. By Step 2, we have that $\pi_2(\Lam_\rad)=\pi_2(\Lam)_\rad$. 

As shown in Step 1 above (see also the proof of Proposition~\ref{lattice-rad}\,$(d)$), $\pi_2(\Lam)_\rad=Z(\pi_2(\Lam))\subset Z(\pi_2(G))$. 
Therefore, $\pi_2(\Lam_\rad)=Z(\pi_2(\Lam))$ which is central in $\pi_2(G)$. 
Let $M=\pi_2^{-1}(Z(\pi_2(\Lam)))$. Then $M=\Lam_\rad\ker\pi_2=\Lam_\rad L$. This, together with the fact that $L\cap \Lam$ is solvable, 
implies that $M\cap\Lam=\Lam_\rad(L\cap \Lam)=\Lam_\rad$. Now $G/M$ is isomorphic to $\pi_2(G)/\pi_2(\Lam_\rad)$, which is a connected 
semisimple group with finite center and without compact factors. 
Moreover, $(\Lam M)/M$ is a lattice in $G/M$ and it is isomorphic to $\Lam/\Lam_\rad$. As $\Lam_\rad=\Ga_\rad$, we get that 
$\Lam/\Ga_\rad$ is also a lattice in $G/M$. Let $G'=G/M$. Then (1) holds. \\

\noindent{\bf $(2):$} If $\Ga/\Ga_\rad$ is finite, then we can take $\Ga'$ to be trivial and the assertion follows immediately. Now suppose 
$\Ga/\Ga_\rad$ is infinite. From (1), there exists a normal subgroup $\Lam$ of finite index in $\Ga$ such that $\Lam_\rad=\Ga_\rad$ and 
$\Lam/\Ga_\rad$ is a lattice in a connected semisimple Lie group $G'$ with finite center and without 
compact factors. Let $\Lam'=\Lam/\Ga_\rad$ and let $\psi:G'\to G'/Z(G')$ be the natural projection, where $Z(G')$ is the center of $G'$. 
As $Z(G')$ is finite, $\psi(\Lam')$ is a lattice in $\psi(G')$. Since $G'/Z(G')$ is linear, we get from Selberg's Lemma that
$\psi(\Lam')$ admits a torsion-free subgroup (say) $\Lam''$ of finite index. Since $(\Lam')_\rad=\{e\}$, we have that 
$\Lam'\cap Z(G')=\{e\}$, and hence $\psi^{-1}(\Lam'')\cap\Lam'$ is a torsion-free subgroup of finite index in $\Lam'$. 
This, together with Lemma 3.13 of \cite{PaS20}, implies that 
$\Lam'=\Lam/\Ga_\rad$ admits a torsion-free subgroup (say) $\Ga'$ of finite index such that the set $R_g$ of roots of $g$ in $\Ga'$ 
is finite for every $g\in\Ga'$. Since $\Lam$ is a subgroup of finite index in $\Ga$, we have that $\Ga'$ is a subgroup of
finite index in $\Ga/\Ga_\rad$. 
\end{proof}

For a lattice $\Ga$ in a connected Lie group $G$, it is shown in \cite{PaS20} that $\Sub^c_\Ga$ is closed in $\Sub_\Ga$ if 
$G$ is either solvable or semisimple. Here, we generalise this to lattices in any connected Lie group.

\begin{corollary} \label{cyc-cl} 
Let $G$ be a connected Lie group and let $\Gamma$ be a lattice in $G$. Then $\Sub^c_\Gamma$ is closed. 
\end{corollary}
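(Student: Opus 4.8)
The plan is to reduce the assertion to the two cases that are already understood — discrete polycyclic groups and lattices in semisimple Lie groups — and then to glue the pieces together by the extension criterion of Lemma~\ref{cyc-sg}. Since $\Gamma$ is a lattice it is a discrete group, so all the discrete-group results of the previous sections apply to $\Gamma$ and its subquotients. First I would recall that $\Sub^c_G$ is closed for every discrete polycyclic group $G$, as noted in the introduction, and that, by Proposition~\ref{lattice-rad}\,$(a)$, the maximal solvable normal subgroup $\Gamma_\rad$ is polycyclic; hence $\Sub^c_{\Gamma_\rad}$ is closed. This settles the ``solvable part'' of $\Gamma$.

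Next I would split according to Proposition~\ref{quo-poly}\,$(1)$. If $\Gamma/\Gamma_\rad$ is finite, then $\Gamma_\rad$ has finite index in $\Gamma$, and the second (finite-index) statement of Lemma~\ref{cyc-sg}, together with the closedness of $\Sub^c_{\Gamma_\rad}$, immediately yields that $\Sub^c_\Gamma$ is closed.

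If instead $\Gamma/\Gamma_\rad$ is infinite, Proposition~\ref{quo-poly}\,$(1)$ provides a normal subgroup $\Lambda$ of finite index in $\Gamma$ with $\Gamma_\rad\subset\Lambda$ such that $\Lambda/\Gamma_\rad$ is a lattice in a connected semisimple Lie group $G'$ with finite center and without compact factors. By the result of \cite{PaS20} quoted just before the corollary (closedness of $\Sub^c$ for lattices in semisimple Lie groups), $\Sub^c_{\Lambda/\Gamma_\rad}$ is closed. Applying the first statement of Lemma~\ref{cyc-sg} to the normal subgroup $\Gamma_\rad$ of $\Lambda$, whose quotient is $\Lambda/\Gamma_\rad$, and using that both $\Sub^c_{\Gamma_\rad}$ and $\Sub^c_{\Lambda/\Gamma_\rad}$ are closed, I would conclude that $\Sub^c_\Lambda$ is closed. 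Finally, since $\Lambda$ has finite index in $\Gamma$, the finite-index statement of Lemma~\ref{cyc-sg} gives that $\Sub^c_\Gamma$ is closed, completing the argument.

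I do not anticipate a serious obstacle: the entire structural burden has already been discharged in Propositions~\ref{lattice-rad} and~\ref{quo-poly}, and Lemma~\ref{cyc-sg} is designed precisely for this kind of two-step reduction through a normal subgroup and a finite-index subgroup. The only point deserving a moment's care is to confirm that the semisimple quotient $G'$ delivered by Proposition~\ref{quo-poly}\,$(1)$ falls under the hypotheses of the \cite{PaS20} closedness result; but since $G'$ is genuinely semisimple, that result applies verbatim to $\Lambda/\Gamma_\rad$.
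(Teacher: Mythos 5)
Your proposal is correct and follows essentially the same route as the paper: polycyclicity of $\Gamma_\rad$ (hence closedness of $\Sub^c_{\Gamma_\rad}$ via \cite{PaS20}), the finite-index normal subgroup $\Lambda$ from Proposition~\ref{quo-poly}\,(1) whose quotient $\Lambda/\Gamma_\rad$ is finite or a lattice in a connected semisimple Lie group, the \cite{PaS20} closedness result in the semisimple case, and the two-step application of Lemma~\ref{cyc-sg}. The only cosmetic difference is that you split into cases according to whether $\Gamma/\Gamma_\rad$ is finite, while the paper treats both cases uniformly inside the same chain of reductions.
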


\begin{proof} By Lemma~\ref{solvable-Lie}\,(2), $\Ga_\rad$ is polycyclic, every subgroup of it is finitely generated and, by  
Lemma 3.3 in \cite{PaS20}, $\Sub^c_{\Ga_\rad}$ is closed. From Proposition~\ref{quo-poly}\,(1), we have that $\Ga$ has a 
normal subgroup $\Lam$ of finite index such that $\Ga_\rad\subset\Lam$ and $\Lam/\Ga_\rad$ is either finite or it is a lattice 
in a connected semisimple Lie group.  
In the first case, $\Sub^c_{\Lam/\Ga_\rad}$ is finite, and in the second 
case, it is closed by Lemma 3.14 of \cite{PaS20}. As $\Ga/\Lam$ is finite, so is 
$\Sub_{\Ga/\Lam}$. Now by Lemma~\ref{cyc-sg}, $\Sub^c_\Gamma$ is closed. 
\end{proof}

\section{Distal actions of automorphisms of lattices ${\mathbf \Gamma}$ of Lie Groups on Sub$_{\mathbf \Gamma}$}

In this section we discuss and characterise automorphisms of a lattice $\Ga$ which are in class $\NC$ and 
also those which act distally on $\Sub^c_\Ga$. For a certain class of connected Lie groups $G$, we characterise those automorphisms 
of $G$ which keep a lattice $\Ga$ invariant, and act distally on $\Sub^c_\Ga$.  We first state and prove some lemmas for discrete groups. 
Note that any finitely generated abelian group has a unique maximal finite subgroup. 
 
 \begin{lemma} \label{NC}
 Let $G$ be a discrete group and let $T\in\Aut(G)$ be such that $T\in\NC$. Let $H$ be a closed normal 
 subgroup of $G$ such that $T|_H=\Id$. Let 
 $x$ be a nontrivial torsion element in $G$ such that $T(x)\in xH$. Then there exist $l,n\in\N$ such that $x^l\ne e$ and 
 $T^n(x^l)=x^l$. 
 
 Moreover, if $Z(H)$ is finitely generated, then there exists $m\in\N$ such that $m$ is the order of the unique 
 maximal finite group of $Z(H)$ and the following holds: For every nontrivial torsion element $x\in G$ with $T(x)\in xH$,
there exists $l\in\N$, which depends on $x$, such that $T^m(x^l)=x^l\ne e$. 
\end{lemma}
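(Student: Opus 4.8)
The plan is to convert the hypotheses into an explicit formula for the $T$-orbit of $G_x$ and then use the $\NC$ condition together with compactness of $\Sub_G$ to force a genuine periodic point.

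First I would locate the element $h_1:=x^{-1}T(x)$. Since $T(x)\in xH$ we have $h_1\in H$, and since $T|_H=\Id$, Lemma~\ref{zgh} gives $T(g)\in gZ_G(H)$ for all $g\in G$; applying this with $g=x$ yields $h_1\in Z_G(H)\cap H=Z(H)$. Thus $h_1$ lies in the \emph{abelian} group $Z(H)$, which is moreover $\inn(x)$-invariant because $Z(H)$ is characteristic in $H$ and $H$ is normal in $G$. Because $T|_H=\Id$ and $h_1\in H$, an easy induction gives $T^n(x)=xh_1^n$ for all $n\in\Z$. Expanding powers and using that $Z(H)$ is abelian and $\inn(x)$-invariant, I would record the key identity
\[
T^n(x^l)=x^l\,w(l)^n,\qquad w(l):=\prod_{i=0}^{l-1}x^{-i}h_1x^i\in Z(H),
\]
valid for all $l\in\N$ and $n\in\Z$. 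Writing $r$ for the (finite) order of $x$, this shows $T^n(G_x)=\{x^l w(l)^n\mid 0\le l\le r-1\}$, so each subgroup in the orbit is determined by the fixed data $w(1),\dots,w(r-1)\in Z(H)$ together with the exponent $n$.

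Next I would invoke compactness of $\Sub_G$ to pass to a subsequence with $T^{n_k}(G_x)\to L$ in $\Sub_G$; since $T\in\NC$ and $G_x$ is a nontrivial cyclic group, $L\ne\{e\}$. Choosing a nontrivial $g\in L$, the convergence criterion for discrete groups (Lemma 3.2 of \cite{PaS20}) gives $g\in T^{n_k}(G_x)$ for all large $k$, so $g=x^{l_k}w(l_k)^{n_k}$ with $l_k\in\{0,\dots,r-1\}$; after a further pigeonhole I may assume $l_k\equiv l$ is constant. Since $g\ne e$ forces $l\ne 0$, we get $x^l\ne e$; and comparing two indices gives $w(l)^{n_k-n_{k'}}=e$ with $n_k\ne n_{k'}$, so $w(l)$ is a torsion element of $Z(H)$. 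Taking $n$ to be the order of $w(l)$ then yields $T^n(x^l)=x^l w(l)^n=x^l\ne e$, proving the first assertion.

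Finally, the second assertion is immediate once the finitely generated abelian group $Z(H)$ is put in normal form: its torsion elements form the unique maximal finite subgroup $F$, and with $m:=|F|$ every torsion element $w\in Z(H)$ satisfies $w^m=e$ by Lagrange. Running the argument above for an arbitrary nontrivial torsion $x$ with $T(x)\in xH$ produces an $l$ (depending on $x$) with $x^l\ne e$ and $w(l)$ torsion; since then $w(l)\in F$ and $w(l)^m=e$, we conclude $T^m(x^l)=x^l\ne e$ with the \emph{same} $m$ for every such $x$. I expect the main obstacle to be the middle step: extracting from the $\NC$ hypothesis, via compactness, the discrete-convergence criterion, and a pigeonhole on the exponent $l$, that some twist $w(l)$ with $1\le l\le r-1$ is genuinely torsion, rather than merely manipulating the orbit formula formally.
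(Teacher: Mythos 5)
Your proof is correct and follows essentially the same route as the paper's: write $T^n(x^l)=x^ly_l^n$ with a twist $y_l\in H$, use $T\in\NC$ together with compactness of $\Sub_G$, the discrete convergence criterion, and a pigeonhole on exponents to force some $y_l$ (your $w(l)$, $1\le l<r$) to be torsion, then bound its order by $m=|F|$ inside $Z(H)$ to get the uniform exponent in the second assertion. The only cosmetic difference is that you place the twists in $Z(H)$ from the start via Lemma~\ref{zgh} (with an explicit product formula), whereas the paper needs only $y_l\in H$ for the first assertion and invokes Lemma~\ref{zgh} just for the ``moreover'' part.
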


\begin{proof} Let $x\in G$ be a nontrivial torsion element and let $n_x\in\N\setminus\{1\}$ be the smallest number such that $x^{n_x}=e$. 
Then $T(x^j)=x^jy_j$ for some $y_j\in H$, $1\leq j<n_x$. If $y_j=e$ for some $j$ with $1\leq j<n_x$, then $T(x^j)=x^j\ne e$ and the first 
assertion holds for $l=j$ and $n=1$. Suppose $y_j\ne e$ for all $j$, $1\leq j<n_x$. Now we show that $y_l$ has finite 
order for some $l$ such that $1\leq l<n_x$. Since $T\in\NC$ and $\Sub_G$ is compact, $T^{n_k}(G_x)\to A\ne\{e\}$ for some 
unbounded monotone sequence $\{n_k\}\subset\N$. Let $a\in A$ be such that $a\ne e$. Since $G$ is discrete and $x^{n_x}=e$, 
passing to a subsequence of $\{n_k\}$ if necessary, we get that for all $k$, $T^{n_k}(x^l)=a$, for some fixed $l$ such that 
$1\leq l<{n_x}$. Therefore, $x^ly_l^{n_k}=a$ and hence $y_l^{n_k}=x^{-l}a$ for infinitely many $k$. 
 This implies that $y_l$ has finite order (say) $n$. Since $T(x^l)=x^ly_l$, we get that $T^n(x^l)=x^ly_l^n=x^l$.
 
Now suppose $Z(H)$ is finitely generated. To prove the last statement, we may assume that 
$G=\{g\mid T(g)\in gH\}$. By Lemma~\ref{zgh}, we get that $T$ acts trivially on $G/Z(H)$. Let $m$ be the order of the 
unique maximal finite subgroup (say) $F$ of $Z(H)$. We get as above that for a torsion element $x\in G$, there exists $l\in\N$,  
which depend on $x$, such that $x^l\ne e$ and $T(x^l)=x^ly_l\in x^lF$. Therefore, $T^m(x^l)=x^ly_l^m=x^l\ne e$. 
\end{proof}

The following corollary will be used often and it is an easy consequence of Lemma~\ref{NC}. We give a short proof for the sake of 
completeness. 

\begin{corollary} \label{cor-nc}
Let $G$ be a discrete group and let $H$ be a normal subgroup of finite index in $G$ 
such that $Z(H)$ is finitely generated. Let $T\in\Aut(G)$ be such that $T\in\NC$ and $T|_H=\Id$.  
Then there exists $m\in\N$ such that $G'=\{g\in G\mid T^m(g)=g\}$ is a subgroup of finite index in $G$, $H\subset G'$, 
$T^m$ acts trivially on $G/Z(H)$, and the following hold: For every nontrivial element $x\in G$, 
$G_x\cap G'\ne\{e\}$. Moreover, if $x\not\in G'$, then $\{T^n(x)\}_{n\in\N}$ is infinite $($unbounded$)$. 
\end{corollary}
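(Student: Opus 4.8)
The plan is to replace $T$ by a power $S = T^r$ that acts trivially on $G/H$, extract from Lemma~\ref{zgh} the key identity $S^k(x) = x\,z_x^k$ with $z_x \in Z(H)$, and then choose a single exponent $m$ tied to Lemma~\ref{NC} so that all five conclusions follow. Since $T|_H=\Id$ forces $T(H)=H$, the automorphism $T$ descends to the finite group $G/H$, and some positive power $S:=T^r$ acts trivially there, i.e.\ $S(g)\in gH$ for every $g$. I would first note that $\NC$ is stable under nonzero powers (if $T^{rn_k}(A)\to\{e\}$ for a nontrivial cyclic $A$, the sequence $\{rn_k\}\subset\Z$ would contradict $T\in\NC$), so $S\in\NC$. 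As $S|_H=\Id$ and $S$ is trivial on $G/H$, Lemma~\ref{zgh} yields that $S$ is trivial on $G/Z(H)$; hence $z_x:=x^{-1}S(x)\in Z(H)$ for all $x$, and since $z_x$ is fixed by $S$ a short induction gives $S^k(x)=x\,z_x^k$ for all $k$.

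Next I would pin down $m$. Let $F$ be the unique maximal finite subgroup of the finitely generated abelian group $Z(H)$, set $m_0=|F|$ (the integer supplied by the \emph{moreover} clause of Lemma~\ref{NC}), and put $m=r\,m_0$, so $T^m=S^{m_0}$. Three of the assertions are then immediate: $T^m|_H=\Id$ gives $H\subseteq G'$; since $G'$ is the fixed-point subgroup of $T^m$ and contains the finite-index subgroup $H$, it has finite index in $G$; and $T^m=S^{m_0}$ inherits triviality on $G/Z(H)$ from $S$.

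For the remaining two assertions I would split according to the order of $x$. If $x$ is a nontrivial torsion element, then $S(x)\in xH$, so the \emph{moreover} clause of Lemma~\ref{NC} applied to $S$ produces $l\in\N$ with $T^m(x^l)=S^{m_0}(x^l)=x^l\ne e$, whence $x^l\in G_x\cap G'\setminus\{e\}$; if $x$ has infinite order, finiteness of $[G:G']$ and the pigeonhole principle on the cosets $x^jG'$ give $x^k\in G'$ with $k\ge1$ and $x^k\ne e$. Thus $G_x\cap G'\ne\{e\}$ in all cases. Finally, the identity $S^k(x)=x\,z_x^k$ yields the dichotomy behind the last claim: were $\{T^n(x)\}_{n\in\N}$ finite, then $\{x\,z_x^k\}_{k\in\N}$ would be finite, $z_x$ would be torsion, hence $z_x\in F$ and $z_x^{m_0}=e$, giving $T^m(x)=x$ and $x\in G'$; contrapositively, $x\notin G'$ forces $\{T^n(x)\}_{n\in\N}$ to be infinite.

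I expect the principal difficulty to be conceptual: choosing one exponent $m$ that works uniformly across all of $G$. For infinite-order elements a crude pigeonhole is enough, but for torsion elements a badly chosen power of $T$ could meet the cyclic group $G_x$ only in the identity, and it is exactly Lemma~\ref{NC}---guaranteeing that the single integer $m_0=|F|$ fixes a nontrivial power of every torsion element with $S(x)\in xH$---that removes this obstruction. The one bookkeeping point needing care is that this same $m_0$ is what governs the orbit-finiteness dichotomy through $S^k(x)=x\,z_x^k$, so that the torsion case, the fixed-point description of $G'$, and the infiniteness of orbits are all controlled by the same choice of $m$.
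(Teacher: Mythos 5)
Your proposal is correct and follows essentially the same route as the paper's proof: pass to a power $S=T^r$ (the paper's $T^k$) acting trivially on $G/H$, invoke Lemma~\ref{zgh} to get triviality on $G/Z(H)$, set $m$ to be $r$ times the order of the maximal finite subgroup $F$ of $Z(H)$, handle torsion elements via the \emph{moreover} clause of Lemma~\ref{NC} and infinite-order elements via pigeonhole, and derive the orbit-finiteness dichotomy from $S^k(x)=x\,z_x^k$ with $z_x\in Z(H)$. The only differences are cosmetic: you spell out the stability of $\NC$ under powers and the pigeonhole step, which the paper merely asserts, and you dispense with the paper's separate (trivial) treatment of finite $G$.
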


\begin{proof} If $G$ is finite, then $T^m=\Id$ for some $m\in\N$ and the assertions follow trivially for $G'=G$. 
Suppose $G$ is infinite. Since $G/H$ is finite, there exists $k\in \N$ such that $T^k$ acts trivially on
$H$ and on $G/H$. By Lemma~\ref{zgh}, $T^k$ acts trivially on $G/Z(H)$. Let $F$ denote the unique maximal finite subgroup of 
$Z(H)$ and let $d$ be the order of $F$. Let
$m=kd$ and let $G'=\{g\in G\mid T^m(g)=g\}$. Since $T|_H=\Id$, we have that $H\subset G'$ and $G'$ is a subgroup of finite index in $G$. 
Also, $T^m$ acts trivially on $G/Z(H)$. For every element $x$ of infinite order, $G_x\cap G'\ne\{e\}$. Since $T\in\NC$, we have that 
$T^k\in\NC$. Applying Lemma~\ref{NC} for $T^k$ instead of $T$ and $H$ as above, we get that for every nontrivial  torsion element $x$, 
there exists $l\in\N$ which depends on $x$, such that $T^m(x^l)=x^l\ne\{e\}$, i.e. $G_x\cap G'\ne\{e\}$. 

Now suppose $x\in G$. Then $T^k(x)= xy$ for some $y\in Z(H)$ and $T^{kn}(x)=xy^n$, $n\in\N$. Suppose $\{T^n(x)\}_{n\in\N}$ is finite. 
Then so is $\{T^{kn}(x)\}_{n\in\N}$, hence $y$ has finite order and $y\in F$. Therefore, $T^m(x)=xy^d=x$, and hence 
$x\in G'$. Therefore, if $x\in G\setminus G'$, then $\{T^n(x)\}_{n\in\N}$ is infinite. 
\end{proof}

The following lemma about distality will be useful for proving Theorems \ref{poly-d} and \ref{lattice-distal}.

\begin{lemma} \label{distal-subc} Let $G$ be a discrete group, $T\in\Aut(G)$ and let $H$ be a normal subgroup of finite index in $G$ 
such that $T|_H=\Id$, $Z(H)$ is finitely generated, and $\Sub^c_H$ is closed. Then $\Sub^c_G$ is closed and the following holds:
$T$ acts distally on $\Sub^c_G$ if and only if $T^m=\Id$, for some $m\in\N$. 
\end{lemma}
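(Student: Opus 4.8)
The plan is to prove the two halves of the statement separately, using Corollary~\ref{cor-nc} as the engine. First, closedness of $\Sub^c_G$ is immediate: $H$ has finite index in $G$ and $\Sub^c_H$ is closed, so Lemma~\ref{cyc-sg} (its ``in particular'' clause) gives that $\Sub^c_G$ is closed. For the equivalence, one direction is trivial: if $T^m=\Id$ for some $m\in\N$, then $T^m$ is the identity homeomorphism of $\Sub^c_G$, which is distal, and since a power of $T$ is distal iff $T$ is (a preliminaries fact), $T$ acts distally on $\Sub^c_G$. The real work is the converse. Assume $T$ acts distally on $\Sub^c_G$. Applying distality to pairs of the form $(G_x,\{e\})$ (where $\{e\}$ and every $G_x$ lie in $\Sub^c_G$) shows $T^{n_k}(G_x)\not\to\{e\}$, i.e.\ $T\in\NC$. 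Hence Corollary~\ref{cor-nc} applies and yields $m\in\N$ with $G'=\{g\in G\mid T^m(g)=g\}$ of finite index, $H\subseteq G'$, $T^m$ trivial on $G/Z(H)$, and the crucial fact that $\{T^n(x)\}_{n\in\N}$ is infinite whenever $x\notin G'$. The goal becomes showing $G'=G$, which gives $T^m=\Id$.

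Suppose, for contradiction, $x\in G\setminus G'$. Since $T^m$ is trivial on $G/Z(H)$ we may write $T^m(x)=xz$ with $z\in Z(H)$, and as $z\in H$ is $T$-fixed, $T^{mn}(x)=xz^n$ for all $n$. If $z$ had finite order then $x$ would be $T^{m\cdot\mathrm{ord}(z)}$-fixed, contradicting infiniteness of its orbit; so $z$ has infinite order. Put $q=[G:H]$, so $x^q\in H\subseteq G'$. Because $Z(H)$ is characteristic in $H$ and $H\trianglelefteq G$, the group $Z(H)$ is normal and abelian, and the factors $x^{-j}zx^{j}$ all lie in it; hence, setting $P_k=\prod_{j=0}^{k-1}x^{-j}zx^{j}\in Z(H)$, a direct computation in the abelian group $Z(H)$ gives the key identity $(xz^n)^k=x^k P_k^{\,n}$. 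Moreover $P_k$ depends only on $k\bmod q$ and, since $T^m(x^q)=x^q$ forces $(xz)^q=x^q$, one gets $P_q=e$.

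For the case where $x$ has infinite order I will exhibit a proximal pair. Let $S'=\{\,b\bmod q: P_b \text{ has finite order}\,\}$; the cocycle relation $P_{b+b'}=P_b\,(x^{-b}P_{b'}x^{b})$ shows $S'$ is a subgroup of $\Z/q\Z$, say $S'=d'(\Z/q\Z)$, with $d'\ge 2$ because $P_1=z$ has infinite order. Set $B=\langle x^{d'}\rangle$, so that $\{x^k: k\bmod q\in S'\}=B$. Let $D$ be the lcm of the orders of the (finitely many) finite-order $P_b$. Using the identity and the discrete convergence criterion of \cite{PaS20}, along $n=iD$ one has $P_b^{\,n}=e$ for $b\in S'$, so the corresponding elements $x^k$ stay fixed and lie in $B$, while for $b\notin S'$ the elements $x^kP_b^{\,n}$ escape every finite subset of $G$; hence $T^{miD}(G_x)=\langle xz^{iD}\rangle\to B$. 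Since $d'\in S'$, the order of $P_{d'}$ divides $D$, so $T^{miD}(x^{d'})=x^{d'}$ and $B$ is fixed pointwise by $T^{miD}$. Thus $\bigl(T^{miD}(G_x),\,T^{miD}(B)\bigr)=(\langle xz^{iD}\rangle,B)\to(B,B)$, and as $x$ has infinite order with $d'\ge2$ we have $G_x\ne B$; this contradicts distality.

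It remains to treat $x\notin G'$ of finite order, where $G_x$ is a finite subgroup. Here I invoke the standard fact that a distal homeomorphism of a compact metric space is pointwise almost periodic, so $G_x$ is recurrent: there is $n_j\to\infty$ with $T^{mn_j}(G_x)\to G_x$. Because each $T^{mn_j}$ is an automorphism, $|T^{mn_j}(G_x)|=|G_x|$, and convergence of subgroups of a fixed finite order to a subgroup of that same order forces $T^{mn_j}(G_x)=G_x$ for large $j$; then some further power of $T^{mn_j}$ fixes $x$, making $\{T^n(x)\}$ finite and contradicting Corollary~\ref{cor-nc}. In both cases we reach a contradiction, so no $x\notin G'$ exists, i.e.\ $G'=G$ and $T^m=\Id$. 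The main obstacle is the limit analysis in the infinite-order case, specifically controlling the torsion part of $Z(H)$: this is exactly where finite generation of $Z(H)$ is used, guaranteeing that the ``extra'' generators either escape to infinity or, along the arithmetic progression $iD$, collapse onto the fixed cyclic subgroup $B$, so that the $\omega$-limit is the genuine $T^m$-periodic proper subgroup needed for the proximal pair.
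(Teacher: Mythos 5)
Your overall strategy is sound and genuinely different from the paper's: where the paper extracts an arbitrary Chabauty limit $L$ of $T_1^{j_k}(G_x)$, shows $L\cap M=G_x\cap M$ is cyclic and $T_1$-fixed, and then gets a contradiction from an element $a\in L\setminus M$ via the orbit-finiteness clause of Corollary~\ref{cor-nc}, you build an explicit proximal pair $(G_x,B)$, with $B=\langle x^{d'}\rangle$ pointwise fixed by $T^{miD}$, by following the orbit along an arithmetic progression; you then treat torsion elements separately through pointwise almost periodicity of distal homeomorphisms (a standard fact, but an external tool the paper never needs, since its single argument covers both the finite and infinite order cases uniformly). The closedness claim, the easy direction, the reduction to Corollary~\ref{cor-nc}, and the cocycle algebra (normality of $Z(H)$, the identity $(xz^n)^k=x^kP_k^n$, $P_q=e$ and periodicity of $P_k$ mod $q$, $S'$ a subgroup of $\Z/q\Z$ with $d'\geq 2$) are all correct, as is the recurrence argument for torsion $x$.

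There is, however, a genuine gap at the crux, namely the claim $T^{miD}(G_x)=\langle xz^{iD}\rangle\to B$. Your justification --- ``for $b\notin S'$ the elements $x^kP_b^{\,n}$ escape every finite subset of $G$'' --- only rules out a fixed $g$ lying in infinitely many of the groups $\langle xz^{i_jD}\rangle$ \emph{with a fixed exponent}, i.e.\ $g=(xz^{i_jD})^{k}$ for one and the same $k$. But condition (II) of the convergence criterion requires you to exclude representations $g=(xz^{i_jD})^{k_j}$ with exponents $k_j$ \emph{varying} with $j$; a priori the limit along a subsequence could pick up such an element, and escape-to-infinity for each fixed $k$ says nothing about this. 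The gap can be closed: since $(xz^{i_jD})^{d'}=x^{d'}$, computing inside the cyclic group $\langle xz^{i_jD}\rangle$ gives $g^{d'}=x^{d'k_j}$, and because $x$ has infinite order this forces $k_j$ to be a constant $k$; then $g=x^kP_k^{i_jD}$ for infinitely many $j$ forces $P_k$ to have finite order, whence $k\bmod q\in S'$, $d'\mid k$, $P_k^{i_jD}=e$ and $g=x^k\in B$. It is worth noting that this is precisely the point the paper takes care of in its own proof: it deduces $x^{ln_k}=a^l$ and concludes that ``either $\{n_k\}$ is a constant sequence or $x$ has finite order'' before proceeding. Without this step (or an equivalent), the central convergence assertion of your argument is unproved.
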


\begin{proof} Since $\Sub^c_H$ is closed and $G/H$ is finite, it follows from Lemma~\ref{cyc-sg} that $\Sub^c_G$ is closed. 
If $T^m=\Id$, then it acts distally on $\Sub_G$, and hence so does $T$. 
Now suppose $T$ acts distally on $\Sub^c_G$. Then $T\in\NC$ and by Corollary~\ref{cor-nc}, there exists $m\in\N$ such that
$M=\{g\in G\mid T^m(g)=g\}$ has finite index in $G$, $H\subset M$, $T^m$ acts trivially on $G/Z(H)$ and the following hold: 
For every nontrivial element $x\in G$, $G_x\cap M\ne\{e\}$, and if $x\not\in M$, then $\{T^n(x)\}_{n\in\N}$ is infinite. 

We want to show that $T^m=\Id$. 
If possible, suppose $x\in G$ is such that $T^m(x)\ne x$. Then $x\not\in M$. Let $l\in\N$ be the smallest integer such that 
$x^l\in M$.  Then $x^l\ne e$ and, from our assumption, $l\ne 1$ and $l$ is less than or equal to the index of $M$ in $G$. 

 Let $T_1=T^m$. We know from above that $T_1(x)\in x Z(H)$ for all $x\in G$ and 
 $T_1(x)\ne x$. Therefore, $G_x\ne G_x\cap M\ne\{e\}$.  
There exists an unbounded monotone sequence $\{j_k\}\subset\N$ such that 
$T_1^{j_k}(G_x)\to L$ (say) in $\Sub^c_G$. Then $G_x\cap M\subset L\cap M\subset L$. 
Note that $G_x\cap M$ is $T_1$-invariant. Moreover, if $g\in L\cap M$, then $g\in T_1^n(G_x)$ 
for some $n=j_k$, and hence $g\in G_x$ as $T_1(g)=g$. That is, $L\cap M=G_x\cap M$ and it is cyclic. 
As $T_1=T^m$ acts distally on $\Sub^c_G$, we have that $L\cap M\ne L$. Let $a\in L$ be such that $a\not\in M$. 
Since $G$ is discrete, replacing $a$ by $a^{-1}$ if necessary, we get that there exists a sequence $\{n_k\}\subset\N$ 
such that $T_1^{j_k}(x^{n_k})=a$ 
for all large $k$. Since $x^l\in M$, we have that $T_1^{j_k}(x^{ln_k})=x^{ln_k}=a^l$
 for large $k$. Therefore, we have that either $\{n_k\}$ is a constant sequence or $x$ has finite order. 
 In either case, passing to a subsequence if necessary, 
 we can choose $n_k=n_0$ for all $k$. 
 
 Now we have that $a=T_1^{j_k}(x^{n_0})$ for all $k$. Note that $n_0=il+i_0$ for some $i\in\{0\}\cup\N$ and 
 for some fixed $i_0$ with $0\leq i_0 <l$. Let $k\in\N$ be fixed. Then $a=T_1^{j_k}(x^{i_0})x^{il}\in x^{i_0}M$ 
 as $x^{il}\in M$ and $T_1(x)\in xM$. Here, $i_0\ne 0$ as $a\not\in M$. Since $T_1$ acts trivially on $G/Z(H)$, 
 $T_1(x^{i_0})=x^{i_0}y$ for some $y\in Z(H)$. Therefore, 
$$a=T_1^{j_k}(x^{n_0})=x^{il}T_1^{j_k}(x^{i_0})=x^{il}x^{i_0}y^{j_k}=x^{n_0}y^{j_k}.$$ 
Hence $y^{j_k}=x^{-n_0}a$ for all $k\in\N$. Since $\{j_k\}$ is unbounded, we get that $y$ has 
finite order, and hence $y\in F$, where $F$ is the unique maximal finite subgroup of $Z(H)$ with order $d$ (say). 
This implies that $T^{md}(x^{i_0})=T_1^d(x^{i_0})=x^{i_0}y^d=x^{i_0}$ and that 
$\{T^n(x^{i_0})\}_{n\in\N}$ is finite. This leads to a contradiction as 
$1<i_0<l$ and $x^{i_0}\not\in M$. Therefore, 
$T^m(x)=x$ for all $x\in G$, and hence $T^m=\Id$.
\end{proof}

Every polycyclic group contains a unique maximal nilpotent normal subgroup.  The following theorem about distality for 
polycyclic groups generalises Theorem 3.10 of \cite{PaS20} since lattices in a connected solvable Lie group are
polycyclic.  Theorem~\ref{poly-d} holds in particular for any discrete solvable subgroup of a connected Lie group due to 
Lemma~\ref{solvable-Lie}, and it will be useful in proving Theorem~\ref{lattice-distal} for lattices in a connected Lie group. 
Note that the class of polycyclic groups is strictly larger than that of lattices in connected solvable Lie groups 
(see Examples 4.29--4.33 in \cite{Ra72}). Example 3.11 in \cite{PaS20} illustrates that not all the statements in 
the theorem are equivalent. 

\begin{theorem} \label{poly-d} Let $G$ be a discrete polycyclic group and let $T\in\Aut(G)$. Let $G_\nil$ be the unique 
maximal nilpotent normal subgroup of $G$. Then $\Sub^c_G$ is closed and $(1-2)$ are equivalent as well as $(3-6)$ are equivalent. 
\begin{enumerate}
\item[{$(1)$}] $T\in\NC$.
\item[{$(2)$}] There exist $n\in\N$ and a subgroup $G'$ of finite index in $G$ containing $G_\nil$ such that 
$T^n|_{G'}=\Id$, the identity map on $G'$ and $G'\cap G_x\ne\{e\}$ for every nontrivial element $x$ in $G$. 
\end{enumerate}
\begin{enumerate}
\item[{$(3)$}] $T$ acts distally on $\Sub^c_G$.
\item [{$(4)$}] $T$ acts distally on $\Sub^a_G$.
\item [{$(5)$}] $T$ acts distally on $\Sub_G$.
\item[{$(6)$}] $T^n=\Id$ for some $n\in\N$.
\end{enumerate}
If $G$ is nilpotent, then $(1-6)$ are equivalent.
\end{theorem}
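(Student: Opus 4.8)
The plan is to treat the two equivalence blocks separately and to read off the nilpotent case at the end. Throughout I use that $\Sub^c_G$ is closed: since $G$ is polycyclic every subgroup is finitely generated, so this is the cited criterion (Lemma 3.3 of \cite{PaS20}), and it also follows from Lemma~\ref{cyc-sg}. For the block $(1)\Leftrightarrow(2)$, the implication $(2)\Rightarrow(1)$ is direct: given $G'$ and $n$ as in $(2)$ and a nontrivial cyclic $A=G_x$, I pick a nontrivial $x^l\in G_x\cap G'$, so that $T^n(x^l)=x^l$; for any sequence $\{m_k\}\subset\Z$, writing $m_k=nq_k+r_k$ with $0\le r_k<n$ gives $T^{m_k}(x^l)=T^{r_k}(x^l)$, a set of only finitely many nontrivial values. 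Hence no subsequential limit of $T^{m_k}(G_{x^l})$, and a fortiori of $T^{m_k}(A)$, is $\{e\}$, so $T\in\NC$.

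The substance is $(1)\Rightarrow(2)$, and here the goal is to exhibit a finite-index subgroup fixed pointwise by a power of $T$. Since a polycyclic group embeds as a discrete (hence closed) solvable subgroup of some $\GL(n,\R)$, Lemma~\ref{solvable-Lie} applies, and using virtual torsion-freeness I would pass to a characteristic finite-index subgroup $G_0$ that is (strongly polycyclic and) torsion-free; on such a group the sets of roots $R_g$ are finite, so the neighbourhood analysis of Lemma~\ref{nbd} is available. The key principle I would extract from $\NC$ is that \emph{every element of $G_0$ has finite $T$-orbit}: if some $g\in G_0$ had an infinite orbit, then $\{T^{m}(g)\}$ leaves every finite set, and by root-finiteness the cyclic groups satisfy $G_{T^{m_k}(g)}\to\{e\}$ along a subsequence, contradicting $T\in\NC$. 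Since $G_0$ is finitely generated, taking the least common multiple of the orbit lengths of a finite generating set gives $k\in\N$ with $T^k|_{G_0}=\Id$. (Where root-finiteness is delicate one may instead run a downward induction along a characteristic torsion-free series of $G_0$, using Lemma~\ref{zgh} to write $T^k(g)=gz_g$ with $z_g$ in the next term and observing that a nontrivial, and hence infinite-order, $z_g$ again forces $G_{gz_g^{m}}\to\{e\}$.) Running the same argument on the finitely generated nilpotent Fitting subgroup $G_\nil$ (whose root sets are finite by the remark in the introduction) yields $T^{N_0}|_{G_\nil}=\Id$.

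With these in hand I would invoke Corollary~\ref{cor-nc} for the $\NC$-automorphism $S=T^{k}$ with $H=G_0$ (normal of finite index, $Z(G_0)$ finitely generated): this produces $m\in\N$ such that $G''=\{g:T^{km}(g)=g\}$ has finite index in $G$, contains $G_0$, and meets every nontrivial cyclic subgroup nontrivially. Setting $n=\lcm(km,N_0)$ and $G'=\{g:T^{n}(g)=g\}$ then gives a finite-index subgroup with $T^n|_{G'}=\Id$, $G'\supseteq G_0G_\nil\supseteq G_\nil$, and $G_x\cap G'\supseteq G_x\cap G''\ne\{e\}$ for every nontrivial $x$, which is exactly $(2)$. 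For the block $(3)$--$(6)$: $(6)\Rightarrow(5)$ because $T^n=\Id$ makes the action on $\Sub_G$ periodic and hence distal; $(5)\Rightarrow(4)\Rightarrow(3)$ because $\Sub^a_G\supseteq\Sub^c_G$ are $T$-invariant and the restriction of a distal map is distal. For $(3)\Rightarrow(6)$, distality on $\Sub^c_G$ forces $T\in\NC$ (otherwise $T^{n_k}(A)\to\{e\}=T^{n_k}(\{e\})$ for some nontrivial cyclic $A$ would approach the diagonal); by $(1)\Rightarrow(2)$ there is a normal finite-index $G_0$ with $T^k|_{G_0}=\Id$, $Z(G_0)$ finitely generated and $\Sub^c_{G_0}$ closed, and since $T^k$ is still distal on $\Sub^c_G$, Lemma~\ref{distal-subc} applied to $T^k$ gives $(T^k)^{m}=\Id$, i.e.\ $(6)$. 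Finally, if $G$ is nilpotent then $G_\nil=G$, so in $(2)$ one must have $G'=G$ and $(2)$ reduces to $T^n|_G=\Id$; thus $(2)=(6)$, and combined with $(1)\Leftrightarrow(2)$, the $(3)$--$(6)$ block, and $(3)\Rightarrow(1)$ all six statements coincide.

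I expect the main obstacle to be the step inside $(1)\Rightarrow(2)$ that converts $T\in\NC$ into the finiteness of $T$-orbits, namely the convergence analysis in $\Sub_G$ showing that escaping orbits collapse to $\{e\}$. This is exactly where finiteness of the root sets $R_g$ and the structural description of neighbourhoods in $\Sub_G$ (Lemma~\ref{nbd}) are essential: for general polycyclic $G$ the root sets need not be finite (e.g.\ the infinite dihedral group has infinitely many involutions), so one genuinely must first descend to the torsion-free finite-index part $G_0$ and control the specific shear sequences $gz_g^{m}$ before reassembling via Corollary~\ref{cor-nc} and treating the Fitting subgroup $G_\nil$ separately.
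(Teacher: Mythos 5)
Your treatment of closedness, of $(2)\Rightarrow(1)$, of the chain $(6)\Rightarrow(5)\Rightarrow(4)\Rightarrow(3)$, of $(3)\Rightarrow(6)$ via Lemma~\ref{distal-subc}, and of the nilpotent case agrees with the paper. The gap is in $(1)\Rightarrow(2)$, and it is fatal to the argument as written. Both of your pivotal claims --- that on a torsion-free (strongly) polycyclic group the root sets $R_g$ are finite, and consequently that $T\in\NC$ forces every element of the torsion-free finite-index part $G_0$ to have finite $T$-orbit, whence $T^k|_{G_0}=\Id$ for some $k$ --- are false. Take $G_0=\langle a,t\mid tat^{-1}=a^{-1}\rangle$, the Klein bottle group: it is torsion-free and strongly polycyclic, yet $(ta^k)^2=t^2$ for every $k\in\Z$, so $R_{t^2}$ is infinite. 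Define $T\in\Aut(G_0)$ by $T(a)=a$, $T(t)=ta^2$. Then $T$ fixes the index-two subgroup $\langle a,t^2\rangle$ pointwise, and every nontrivial $x\in G_0$ either lies in that subgroup or has $x^2$ a nontrivial element of it, so every $T^{n_k}(G_x)$ contains a fixed nontrivial element and $T\in\NC$; but $T^m(t)=ta^{2m}$, so $t$ has infinite $T$-orbit and no power of $T$ is the identity on $G_0$. The same example breaks your parenthetical fallback: here $z_t=a^2$ is nontrivial of infinite order, yet $G_{T^m(t)}=G_{ta^{2m}}$ contains $t^2$ for every $m$, so no subsequence converges to $\{e\}$. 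The collapse-to-$\{e\}$ argument is only valid when $g$ commutes with $z_g$; when it does not, powers of $gz_g^m$ can stabilize (here $(ta^{2m})^2=t^2$ for all $m$), and this is not a removable technicality --- it is exactly the phenomenon the theorem must accommodate, since $(2)$ only promises a finite-index fixed subgroup, not that a torsion-free finite-index part is fixed.

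Concretely, the paper proceeds differently at just this point: it fixes $G_\nil$ pointwise by a power of $T$ (legitimate, since finitely generated nilpotent groups do have finite root sets; Proposition 3.8 of \cite{PaS20}), passes to a normal finite-index $L$ with $L/L_\nil$ abelian and torsion-free so that a further power $T_1$ of $T$ fixes $L_\nil$ pointwise and acts trivially on $L/L_\nil$, and then invokes Lemma 3.12 of \cite{SY19} to conclude only that each $x\in L$ has \emph{some power} $x^i$ fixed by $T_1$ --- exactly the conclusion that survives in the Klein bottle example ($t$ is not fixed, but $t^2$ is). Finite generation of $L$ together with abelianness of $L/L_\nil$ then shows that the fixed-point subgroup of $T_1$ in $L$ has finite index, and only at that stage does Corollary~\ref{cor-nc} enter, as in your reassembly step. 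So your use of Corollary~\ref{cor-nc} and the endgame are fine, but the input you feed it ($T^k|_{G_0}=\Id$ for a torsion-free $G_0$ of finite index) is unobtainable in general; the argument must be rebuilt around ``each element has a fixed power'' rather than ``each element is eventually fixed.''
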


\begin{proof} Since $G$ is polycyclic, every subgroup of $G$ is finitely generated, by Lemma 3.3 of \cite{PaS20}, 
$\Sub^c_G$ is closed. If $G$ is finite, then so is $\Sub_G$ and $(1-6)$ hold trivially as $T^n=\Id$ for some $n\in\N$. Now suppose 
$G$ is infinite. Suppose (1) holds. We know that $G_\nil$ is characteristic in $G$, and hence it is $T$-invariant. Since 
$G_\nil$ is finitely generated and nilpotent, the set $R_g$ of roots of $g\in G$ is finite for every $g\in G$ 
(cf.\ \cite{He77}, Theorems 3.1.13 and 3.1.17). As $T|_{G_\nil}\in\NC$, by Proposition 3.8 of \cite{PaS20}, $T^{k_1}$ acts trivially on 
$G_\nil$ for some $k_1\in\N$. If $G=G_\nil$, then (2) holds for $G'=G$ and $n=k_1$. 

Now suppose $G\ne G_\nil$. Suppose $G/G_\nil$ is finite. Since $T^{k_1}\in\NC$ and $Z(G_\nil)$ is finitely generated, 
we get from Corollary~\ref{cor-nc} that (2) holds for some subgroup $G'$ containing $G_\nil$ and some $n\in\N$. Now suppose 
$G/G_\nil$ is infinite. 
Let $\bar T: G/G_\nil\to G/G_\nil$ be the natural projection. Since $G$ is infinite and polycyclic, it admits a strongly polycyclic subgroup (say)
$L$ of finite index. Passing to a subgroup of finite index if necessary, we may assume that $L$ is $T$-invariant and normal in $G$ and that 
$L/L_\nil$ is abelian and torsion-free (cf.\  \cite{Ra72}, Corollary 4.11). 
Since $L_\nil$ is nilpotent and characteristic in $L$, it is normal in $G$ and it is contained in $G_\nil$. 

Here, $L/L_\nil$ is infinite, as $G/G_\nil$ is so. 
Let $\bar T:L/L_\nil\to L/L_\nil$ be the natural projection. Since $L/L_\nil$ is torsion-free, by Lemma 3.5 of \cite{PaS20}, 
$\bar T|_{(L/L_\nil)}\in\NC$ and hence $\bar T^{k_2}$ acts trivially on $L/L_\nil$ for some $k_2\in\N$. 
Let $k={\rm lcm}(k_1,k_2)$ and let $T_1=T^k$. Then $T_1$ acts trivially on $G_\nil$, $L_\nil$ and on $L/L_\nil$.
Then $T_1|_{L_\nil}=\Id$ and $T_1(x)\in xL_\nil$ for all $x\in L$. If possible, suppose there exists $x\in L$, such that 
$T_1(x^i)\ne x^i$ for all $i\in\N$. Since $L_\nil$ is torsion-free, by Lemma 3.12 of \cite{SY19}, we get that $T_1|_L\not\in\NC$. 
Since $T|_L\in\NC$, this leads to a contradiction. Hence, given any $x\in L$, 
$T_1(x^i)=x^i$ for some $i\in\N$ (which depends on $x$). Since $L$ is finitely generated and $L/L_\nil$ is abelian, we get a 
subgroup (say) $L'$ of finite index in $L$ such that $T_1$ acts trivially on $L'$. Let $G_1=\{g\in G\mid T^k(g)=g\}$. Then 
$L'G_\nil\subset G_1$ and $G_1$ is a subgroup of finite index in $G$. 
Let $H$ be a normal ($T$-invariant) subgroup of finite index in $G_1$ such that $G_\nil\subset H\subset G_1$. 
Since $G$ is polycyclic, so is $Z(H)$, and hence it is finitely generated. Since $T^{k}|_{H}=\Id$ and 
$T^k\in\NC$, applying Corollary~\ref{cor-nc} for $T^k$ instead of $T$, we get that (2) holds for some subgroup 
$G'$ such that $G_\nil\subset H\subset G'$ and some $n\in\N$. 

Now suppose (2) holds. If $G$ is nilpotent, then $G=G_\nil=G'$ and hence $(2)\implies (1)$. Suppose $G$ is not nilpotent. 
Let $G'$ and $n$ be as in (2). Then $G'$ is a subgroup of finite index (say) $m$ and $T^n|_{G'}=\Id$. We show that $T^n\in\NC$. 
This would imply that $T\in\NC$. If $x\in G$ has infinite order, then $T^n(G_{x^m})=G_{x^m}\ne\{e\}$.
If a nontrivial $x\in G$ has finite order, then there exists $l\in\N$ such that $e\ne x^l\in G'$ and $T^n(G_{x^l})=G_{x^l}\ne\{e\}$. 
Therefore, in either case, $T^{nm_j}(G_x)\not\to\{e\}$ for any sequence $\{m_j\}\subset\Z$. Hence, $T^n\in\NC$ and (1) holds.

We know that $(6)\implies (5)\implies (4)\implies (3)$. Now we show that $(3)\implies (6)$. Suppose (3) holds. That is, 
$T$ acts distally on $\Sub^c_G$. Then $T\in\NC$ and hence (2) holds.  
Let $G'$ be a subgroup of $G$ satisfying the first condition in (2). Here $G'$ is a subgroup of finite index in $G$ such that 
$T^n|_{G'}=\Id$ for some $n\in\N$. We may replace $G'$ by a normal subgroup of finite index and assume that 
it is normal in $G$. As $Z(G')$, being polycyclic, is finitely generated, by Lemma~\ref{distal-subc}, we get that $T^n=\Id$ for 
some $n\in\N$, and hence (6) holds. Therefore, $(3-6)$ are equivalent. 

If $G$ is nilpotent, then $G=G_\nil$ and $2\Leftrightarrow 6$, hence $(1-6)$ are equivalent. 
\end{proof}

For a connected Lie group $G$, recall that $\Aut(G)$ is identified with a subgroup of $\GL(\Gc)$ and the topology inherited by it
as a subspace of $\GL(\Gc)$ coincides with the compact-open topology and it is a Lie group. The following theorem generalises 
Corollary 3.9, Theorem 3.10 and Theorem 3.16 of \cite{PaS20} which are for lattices in simply 
connected nilpotent, simply connected solvable and connected semisimple Lie groups respectively. We will illustrate by constructing 
several counter examples that the theorem is the best possible result in this direction. Note that for any lattice $\Ga$ in a connected 
Lie group, $\Sub^c_\Ga$ is closed in $\Sub_\Ga$ by Corollary~\ref{cyc-cl}. 

\begin{theorem} \label{lattice-distal} Let $\Gamma$ be a lattice in connected Lie group $G$. Let $T\in\Aut(\Ga)$. 
Then $(1-2)$ are equivalent and $(3-6)$ are equivalent.

\begin{enumerate} 
\item[{$(1)$}] $T\in\NC$.
\item[{$(2)$}] There exist $n\in\N$ and a subgroup $\Gamma'$ of finite index in $\Ga$ such that $\Ga_\nil\subset\Ga'$,  
$T^n|_{\Ga'}=\Id$, the identity map on $\Ga'$ and for every nontrivial element $x\in \Ga$, $G_x\cap\Ga'\ne\{e\}$. 
\end{enumerate}
\begin{enumerate}
\item[{$(3)$}] $T$ acts distally on $\Sub^c_\Ga$.
\item [{$(4)$}] $T$ acts distally on $\Sub^a_\Ga$.
\item [{$(5)$}] $T$ acts distally on $\Sub_\Ga$.
\item[{$(6)$}] $T^n=\Id$ for some $n\in\N$.
\end{enumerate}
If $G$ or $\Ga$ is nilpotent then $(1-6)$ are equivalent. \\

\noindent Suppose $\tau\in\Aut(G)$ is such that $\tau$ keeps $\Gamma$ invariant and $T=\tau|_\Ga$. Suppose the radical of $G$ is simply 
connected and nilpotent. If the maximal compact connected normal subgroup of a Levi subgroup of $G$ is normal in $G$, then  
$(1-6)$ are equivalent and they are equivalent to each of the following statements:
\begin{enumerate}
\item [{$(7)$}] $\tau\in\NC$. 
\item[{$(8)$}] $\tau$ acts distally on $\Sub^a_G$.
\item[{$(9)$}] $\tau$ acts distally on $\Sub_G$.
\item[{$(10)$}] $\tau$ is contained in a compact subgroup of $\Aut(G)$.
\end{enumerate}
Moreover, if a Levi subgroup of $G$ has no compact factors, then $(1-10)$ are equivalent to the following: 
\begin{enumerate}
\item[{$(11)$}] $\tau^n=\Id$ for some $n\in\N$. 
\end{enumerate}
\end{theorem}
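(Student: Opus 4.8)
The plan is to treat the two equivalence blocks for the abstract automorphism $T\in\Aut(\Ga)$ first, recycling the machinery already in place, and then to bridge to the ambient automorphism $\tau$. By Corollary~\ref{cyc-cl} the space $\Sub^c_\Ga$ is closed, and since distality passes to closed invariant subspaces and $\Sub^c_\Ga\subset\Sub^a_\Ga\subset\Sub_\Ga$ (with $\Sub^a_\Ga$ always closed), the implications $(6)\Rightarrow(5)\Rightarrow(4)\Rightarrow(3)$ come for free. The implication $(3)\Rightarrow(1)$ is immediate from the definition of $\NC$: if $T^{n_k}(A)\to\{e\}$ for a nontrivial cyclic $A$, then the pair $(A,\{e\})$ in $\Sub^c_\Ga$ has its $T$-orbit meeting the diagonal, contradicting distality. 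So the real content is $(1)\Leftrightarrow(2)$ together with the closing implication $(3)\Rightarrow(6)$.

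\textbf{The equivalence $(1)\Leftrightarrow(2)$.}
For $(1)\Rightarrow(2)$ I would follow the template of Theorem~\ref{poly-d}, with $\Ga_\nil$ (which exists by Proposition~\ref{lattice-rad}$(b)$ and is finitely generated nilpotent inside the polycyclic $\Ga_\rad$) playing the role of $G_\nil$. Since $R_g$ is finite in $\Ga_\nil$, the hypothesis $T|_{\Ga_\nil}\in\NC$ forces $T^{k_1}|_{\Ga_\nil}=\Id$ by Proposition 3.8 of \cite{PaS20}. The quotient $\Ga/\Ga_\rad$ is controlled by Proposition~\ref{quo-poly}: it is either finite or has a finite-index subgroup that is a lattice in a connected semisimple group without compact factors and with finite centre, and by part $(2)$ it has a finite-index torsion-free subgroup with finite root sets. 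On the semisimple quotient, the $\NC$ hypothesis yields a power of the induced automorphism acting trivially on a further finite-index subgroup (the semisimple lattice case of \cite{PaS20}, Theorem 3.16 and Corollary 3.9). Pulling these pieces back and intersecting produces a finite-index $T$-invariant $\Ga'\supset\Ga_\nil$ with a power of $T$ trivial on it; Corollary~\ref{cor-nc} then upgrades this to $G_x\cap\Ga'\ne\{e\}$ for all nontrivial $x$, which is $(2)$. The converse $(2)\Rightarrow(1)$ is the same short root-taking argument as in Theorem~\ref{poly-d}: each nontrivial $x$ has a nontrivial power in $\Ga'$ with finite (nontrivial) $T$-orbit, so $T^{m_j}(G_x)\not\to\{e\}$ and hence $T\in\NC$.

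\textbf{The implication $(3)\Rightarrow(6)$ and the nilpotent addendum.}
Assuming $T$ distal on $\Sub^c_\Ga$, we have $T\in\NC$, so $(2)$ gives a finite-index subgroup on which a power of $T$ is the identity; replacing it by the $T$-invariant normal core $H$ of $\mathrm{Fix}(T^n)$, we obtain a normal finite-index $T$-invariant $H$ with $T^n|_H=\Id$, $Z(H)$ finitely generated, and $\Sub^c_H$ closed (Corollary~\ref{cyc-cl}). Since $T^n$ is still distal on $\Sub^c_\Ga$, Lemma~\ref{distal-subc} applied to $T^n$ and $H$ gives $(T^n)^m=\Id$, i.e.\ $(6)$. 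When $\Ga$ (equivalently $G$) is nilpotent, $\Ga=\Ga_\nil=\Ga'$, so $(2)$ already reads $T^n=\Id$, merging the two blocks into $(1)$--$(6)$.

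\textbf{The statements $(7)$--$(11)$ about $\tau$, and the main obstacle.}
Here the plan is to transfer finite order on $\Ga$ into relative compactness of $\{\tau^n\}$ in $\Aut(G)$, and then to invoke the connected-Lie-group theory of \cite{SY19}. The implication $(10)\Rightarrow(6)$ is clean: if $\tau$ lies in a compact subgroup of $\Aut(G)$, then for each element $s$ of a finite generating set of $\Ga$ the orbit $\{\tau^n(s)\}$ is relatively compact in $G$ but contained in the discrete $\Ga$, hence finite, so $\{T^n\}$ is finite and $T^m=\Id$. The reverse transfer $(6)\Rightarrow(10)$ is where the two structural hypotheses are indispensable and is the step I expect to be hardest. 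Because the radical $R$ is simply connected and nilpotent, $\Ga\cap R$ is a lattice in $R$ (arguing as in the proof of Proposition~\ref{lattice-rad}$(c)$) whose logarithm spans the Lie algebra of $R$, so $\tau^m|_{\Ga\cap R}=\Id$ forces $\du\tau^m=\Id$ on that algebra and hence $\tau^m|_R=\Id$; because the maximal compact connected normal subgroup $K'$ of a Levi subgroup is normal in $G$, the semisimple dynamics split into a compact factor, on which every automorphism already sits in the compact group $\Aut(K')$, and non-compact factors, where the Zariski-dense image of $\Ga$ (Borel density, \cite{Ra72}) pins $\tau^m$ down to the identity. Assembling these shows $\{\tau^n\}$ is relatively compact, giving $(10)$. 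The equivalences $(7)\Leftrightarrow(8)\Leftrightarrow(9)\Leftrightarrow(10)$ then follow from Theorem 4.1 of \cite{SY19}, since the hypotheses guarantee that $Z(G)^0\subset R$ is trivial and $G$ has no nontrivial compact connected central subgroup. Finally, when the Levi subgroup has no compact factors the compact factor $K'$ disappears, so relative compactness of $\{\tau^n\}$ becomes finite order of $\tau$, and $\tau^m=\Id$ follows from $\du\tau^m=\Id$ on all of the Lie algebra; this is $(11)$. The counterexamples promised in the introduction are exactly what show that dropping either structural hypothesis breaks this transfer.
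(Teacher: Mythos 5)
Your architecture matches the paper's (same reductions via Theorem~\ref{poly-d}, Proposition~\ref{quo-poly}, Corollary~\ref{cor-nc}, Lemma~\ref{distal-subc}, and the results of \cite{PaS20} and \cite{SY19}), and your treatment of $(6)\Rightarrow(5)\Rightarrow(4)\Rightarrow(3)$, $(3)\Rightarrow(1)$, $(3)\Rightarrow(6)$ and $(10)\Rightarrow(6)$ is sound. But at the two genuinely hard points the proposal has gaps. First, in $(1)\Rightarrow(2)$, the step ``pulling these pieces back and intersecting produces a finite-index $T$-invariant $\Ga'$ with a power of $T$ trivial on it'' is precisely the extension problem that the whole proof turns on, and it is not routine: knowing that $T^{n_3}$ is trivial on $\Delta\subset\Ga_\rad$ (finite index) and on $\Lam/\Ga_\rad$ only gives $T^{n_3m}(x)\in xZ(\Delta)$ (via Lemma~\ref{zgh}), and such a ``unipotent twist'' can have infinite order while being trivial on subgroup and quotient. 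The template of Theorem~\ref{poly-d} does not transfer: there the quotient $L/L_\nil$ is \emph{abelian} and torsion-free, so fixing a power of each generator forces a finite-index fixed subgroup; here $\Lam/\Ga_\rad$ is a nonabelian semisimple lattice, and having a power of each element fixed does not yield a finite-index fixed subgroup. The paper kills the twist by a separate mechanism: the representation $\varrho:\Ga\to\GL(d,\Z)$ on $Z(\Delta)\cong\Z^d$, Borel's net-subgroup theorem (Corollary 17.7 of \cite{Bo19}) to make $\varrho(\Lam)$ net, and Lemma 3.12 of \cite{SY19}, which together show $T^{n_3m}(x)=x$ for \emph{all} $x\in\Lam$ (an eigenvalue that is a nontrivial root of unity would contradict netness). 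Nothing in your sketch replaces this.

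Second, in the passage to $\tau$, your key structural claim ``$\Ga\cap R$ is a lattice in $R$'' is false under the theorem's hypotheses when the Levi subgroup has a nontrivial compact factor $K'$ (normal in $G$): take $G=K'\times\R$ with $K'$ compact semisimple, and $\Ga=\{(k^n,n)\mid n\in\Z\}$ for $k\in K'$ of infinite order; this is a cocompact lattice, the radical $\R$ is simply connected and nilpotent, $K'$ is normal in $G$, yet $\Ga\cap R$ is trivial. (Corollary 8.28 of \cite{Ra72} requires that no compact factor of the Levi subgroup centralise $R$, which fails here since $K'$ normal forces $[K',R]\subset K'\cap R=\{e\}$.) This is why the paper first passes to $\psi:G\to G/K$, works there, and only concludes $\tau^n(x)\in xK$, settling for an inner automorphism by an element of $K$ and hence relative compactness, i.e.\ $(10)$ rather than $(11)$. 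Moreover, even in the no-compact-factors case where $\Ga'\cap N$ \emph{is} a lattice in $N$, your ``assembling'' step repeats the extension gap at the Lie-group level: $\tau$ trivial on $N$ and trivial on $G/N$ does not give $\tau$ trivial on $G$. The paper's resolution — conjugacy of Levi subgroups ($\tau(S)=aSa^{-1}$, $a\in N$), reduction of $\tau|_S$ to $\inn(x)|_S$ with $x\in N$ using triviality on $G/Z(N)$, and then the Borel density theorem applied to the representation $\rho$ of $S/(S\cap N)$ on $\NN$ fixing $\log x$ to force $x\in Z_G(S)$ — is the crux of $(2)\Rightarrow(11)$, and invoking ``Borel density pins $\tau^m$ down'' without this construction leaves the step unproved.
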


\begin{proof}  
If $G$ is simply connected and nilpotent (resp.\ connected and semisimple), then the assertions in the theorem follow 
from Corollary 3.9 (resp.\ Theorem 3.16) of \cite{PaS20}. 

Suppose $G$ is compact. Equivalently, $\Ga$ is finite, hence $T^n=\Id$ for some $n\in\N$, and $(1-6)$ are 
equivalent. Moreover, when the radical of $G$ is simply connected (and nilpotent), 
the compact group $G$ is semisimple, and hence $\Aut(G)$ is also compact. Therefore, $(1-10)$ are equivalent by 
Theorem 3.16 of \cite{PaS20}, and the additional condition that a Levi subgroup of $G$ has no compact factor implies 
that $G$ is trivial in this case and hence $\tau=\Id$ and $(1-11)$ are equivalent. 

We now assume that $G$ is not compact (equivalently, $\Ga$ is not finite). 
If $G$, or more generally, $\Ga$ is nilpotent, then $\Ga=\Ga_\nil$ and $(2)\implies (6)$, and hence $(2-6)$ are equivalent, 
and $(1-6)$ are equivalent by Theorem~\ref{poly-d} as $\Ga$ is polycyclic in this case. 

If $\Ga$ is solvable, then by Lemma~\ref{solvable-Lie}\,(2), $\Ga$ is polycyclic, 
and it follows from Theorem~\ref{poly-d} that $(1-2)$ (as well as $(3-6)$) are equivalent.  Suppose $\Ga\ne \Ga_\rad$. 
Note that $(2)\implies (1)$ follows easily as in the proof of Theorem~\ref{poly-d}\,($(2)\implies (1)$).  
Now we show that $(1)\implies (2)$.

Suppose (1) holds. We know that $T(\Ga_\rad)=\Ga_\rad$ and that $\Ga_\rad$ is polycyclic by Proposition~\ref{lattice-rad}. Note that 
$\Ga_\nil\subset \Ga_\rad$ and it is also the unique maximal nilpotent normal subgroup of $\Ga_\rad$. By Theorem~\ref{poly-d}, 
$\Ga_\rad$ has a subgroup (say) $\Delta$ of finite index such that $\Ga_\nil\subset\Delta$ and $T^{n_1}|_{\Delta}=\Id$. 
Passing to a normal subgroup of finite index of $\Delta$ if necessary, we get that $\Delta$ is normal in $\Ga$. 
Suppose $\Ga_\rad$ is a subgroup of finite index in $\Ga$. As $T^{n_1}\in\NC$ and $Z(\Delta)$ is finitely 
generated, by Corollary~\ref{cor-nc}, we get that $(2)$ holds for some $\Ga'$ containing $\Delta$, and some $n\in\N$. 

Now suppose $\Ga/\Ga_\rad$ is infinite. By Proposition~\ref{quo-poly}, $\Ga$ has a normal subgroup (say) $\Lam$ of 
finite index such that $\Ga_\rad=\Lam_\rad$, $\Lam/\Ga_\rad$ is torsion-free and the set $R_g$ of roots of $g$ in 
$\Lam/\Ga_\rad$ is finite for every $g\in \Lam/\Ga_\rad$. 
Passing to a subgroup of finite index if necessary, we may also assume that $T(\Lam)=\Lam$. 

Suppose $T\in\NC$. Then $T|_\Lam\in\NC$. 
Let $\eta:\Ga\to\Ga/\Ga_\rad$ be the natural projection and let $\bar T$ be the
automorphism of $\Ga/\Ga_\rad$ corresponding to $T$. Since $\Ga$ is discrete and $\eta(\Lam)$ is torsion-free, we get 
by Lemma 3.5 of \cite{PaS20} that $\bar T|_{\eta(\Lam)}\in\NC$. Since the set $R_g$ of roots of $g$ in $\eta(\Lam)$ is finite for every
$g\in \eta(\Lam)$, we get from Proposition 3.8 of \cite{PaS20} that $\bar T^{n_2}|_{\eta(\Lam)}=\Id$ for some $n_2\in\N$. Therefore,
$T^{n_2}$ acts trivially on $\Lam/\Ga_\rad$. 

Suppose $\Ga_\rad$ is a  finite group of order $n'$ (say). Replacing $n_2$ by its multiple in $\N$ if necessary, we may assume that 
$T^{n_2}$ acts trivially on $\Lam/\Ga_\rad$ as well as on $\Ga_\rad$. Now we may again replace $n_2$ by $n_2n'$ and get that 
$T^{n_2}|_\Lam=\Id$. By Lemma~\ref{solvable-Lie}, $Z(\Lam)$ is finitely generated, and since $T^{n_2}\in\NC$, by Corollary~\ref{cor-nc}, 
we get that (2) holds for some subgroup $\Ga'$ containing $\Lam$, and some $n\in\N$. We note here that 
$\Ga_\nil\subset\Ga_\rad\subset \Lam\subset\Ga'$. 

Now suppose $\Ga_\rad$ is infinite. Replacing  $n_2$ by its multiple in $\N$ if necessary, we may assume  that $T^{n_2}$ acts trivially on 
$\Ga/\Lam$ and $\Lam/\Ga_\rad$. We also have from above that $\Ga_\rad$ has a subgroup 
$\Delta$ of finite index such that $T^{n_1}|_{\Delta}=\Id$, where $n_1\in\N$ and $\Ga_\nil\subset\Delta$. Here, 
$\Delta$ is infinite since $\Ga_\rad$ is so. Passing to a subgroup of finite index if necessary, we may assume 
without loss of generality that $\Delta$ is strongly polycyclic, and hence torsion-free. Let $m$ be the index of $\Delta$ in $\Ga_\rad$ 
and let $L_m$ be the group generated by 
$\{x^m\mid x\in\Ga_\rad\}$. Then $L_m\subset \Delta$ and it is characteristic in $\Ga$. Since $\Ga_\rad$ is polycyclic, $L_m$ is a 
subgroup of finite index in $\Ga_\rad$ (cf.\ \cite{Ra72}, Lemma 4.4). 
Replacing $\Delta$ by $L_m$, we may assume that $\Delta$ is $T$-invariant and normal in $\Ga$. 
Replacing $n_2$ by a larger multiple of it in $\N$ if necessary, we may assume that $T^{n_2}$ acts trivially on 
$\Ga_{\rad}/\Delta$. Let $n_3=\lcm(n_1,n_2)$.
Then $T^{n_3}$ acts trivially on $\Ga/\Lam$, $\Lam/\Ga_\rad$, $\Ga_\rad/\Delta$ and also on $\Delta$. Now for any $x\in\Lam$, 
$T^{n_3m}(x)\in x\Delta$ where $m$ as above is the index of $\Delta$ in $\Ga_\rad$. 
Then $T^{n_3m}$ acts trivially on both $\Delta$ and on $\Lam/\Delta$. By Lemma~\ref{zgh}, $T^{n_3m}$ acts trivially on $\Lam/Z(\Delta)$. 
If $Z(\Delta)=\{e\}$. Then $T^{n_3m}$ acts trivially on $\Lam$, and since $T^{n_3m}\in\NC$, by Corollary~\ref{cor-nc}, 
we get that $(2)$ holds for some subgroup $\Ga'$ containing $\Lam$, and some $n\in\N$. Now $\Ga_\nil$ may not be contained in 
$\Delta$, but $\Ga_\nil\subset\Ga_\rad\subset\Lam\subset\Ga'$.

Now suppose $Z(\Delta)\ne\{e\}$. 
Note that $Z(\Delta)$, being abelian and strongly polycyclic, is isomorphic to $\Z^d$ for some $d\in\N$. 
Note also that $Z(\Delta)$ is characteristic in 
$\Delta$ and hence normal in $\Ga$. Therefore, we have a natural homomorphism $\varrho:\Ga\to \GL(d,\Z)$, 
$\varrho(x)=\inn(x)|_{Z(\Delta)}$, $x\in\Ga$, where $\inn(x)$ is the inner automorphism by $x$ in $\Ga$. 
Note that $\ker\varrho$ is a normal subgroup of $\Ga$
which contains $\Delta$. Since $\varrho(\Ga)$ is finitely generated, by Corollary 17.7 of \cite{Bo19}, $\varrho(\Ga)$ has a subgroup 
(say) $\Ga''$ of finite index which is net, i.e.\ for every $g\in\Ga''$, the multiplicative group generated by eigenvalues of $g$ in 
$\Cm\setminus\{0\}$ is torsion-free. 
We may replace $\Ga''$ by a subgroup of finite index if necessary, and assume that it is normal in $\varrho(\Ga)$. 
Now $\varrho^{-1}(\Ga'')\cap\Lam$ is a normal subgroup of finite index in $\Gamma$ and it contains $\Delta$. Replacing $\Lam$ by 
$\varrho^{-1}(\Ga'')\cap \Lam$, we may assume that $\varrho(\Lam)$ is net. As $\Lam$ is a normal subgroup of finite index in $\Ga$, 
$\Lam\cap \Ga_\rad=\Lam_\rad$ is normal in $\Ga$ and it is a subgroup of finite index in $\Ga_\rad$. 

Let $T_1=T^{n_3m}$. As noted above $T_1$ acts trivially on $\Delta$ and $\Lam/Z(\Delta)$ where $m$ as above is the index of 
$\Delta$ in $\Ga_\rad$. We show that $T_1$ acts trivially on $\Lam$. Since $T\in\NC$, we have that $T_1\in\NC$. 
Let $x\in\Lam$ be fixed. Here, $T_1(x)=xy$ for some $y\in Z(\Delta)$. If $y=e$, then $T_1(x)=x$. If possible, suppose $y\ne e$. 
If $xy=yx$, then $T_1(x^n)=x^ny^n\ne x^n$ for every $n\in\N$, as $Z(\Delta)$ is torsion-free. By Lemma 3.12 of \cite{SY19}, it leads to
to a contraction as $T_1\in\NC$. Now suppose $xy\ne yx$. By Lemma 3.12 of \cite{SY19}, we get that 
$T_1(x^l)=x^l$, for some $l\in\N\setminus\{1\}$ which depends on $x$. This implies that $(xy)^l=x^l$, and hence that 
$x^{l}yx^{-l}=y$. In particular, $\varrho(x)$ has an eigenvalue which is a nontrivial root of unity. 
This leads to a contradiction as $x\in\Lam$ and $\varrho(\Lam)$ is net. 
Therefore, $T_1(x)=x$ for all $x\in\Lam$. 

As $T^{n_1}$ acts trivially on $\Ga_\nil$ and $n_1$ is a factor of $n_3$, we have that $T_1=T^{n_3m}$ acts trivially on $\Ga_\nil$. 
Replacing $\Lam$ by $\Lam\Ga_\nil$, we get $\Ga_\nil\subset\Lam$, which is a subgroup of finite index in $\Ga$.
As $Z(\Lam)$ is finitely generated and $T_1\in\NC$, we get from Corollary~\ref{cor-nc} that (2) holds for some subgroup $\Ga'$ containing 
$\Lam$, and some $n\in\N$. Therefore, $(1)\implies (2)$.

We know that $(6)\implies (5)\implies (4)\implies (3)$. Suppose (3) holds. Note that $(3)\implies (1)\implies (2)$. Let $\Ga'$ be a subgroup 
of finite index in $\Ga$ such that $T^n|_{\Ga'}=\Id$ for some $n\in\N$. We may replace $\Ga'$ by a subgroup of finite index and assume that
$\Ga'$ is normal in $\Ga$. Since $Z(\Ga')$ is finitely generated, we get from Lemma~\ref{distal-subc} that $T^n=\Id$ for some $n\in\N$.
Therefore, $(3-6)$ are equivalent. 

Let $\tau\in\Aut(G)$ be as in the hypothesis such that $\tau|_\Ga=T$. 
If $G$ is nilpotent, then it is simply connected and $(1-11)$ are equivalent by Corollary 3.9 of \cite{PaS20}. If $G$ is a
 connected semisimple Lie group, then by Theorem 3.16 of \cite{PaS20}, we get that $(1-10)$ are equivalent and if $G$ has no 
compact factors, then $(1-11)$ are equivalent. Now we suppose that $G$ is neither nilpotent nor semisimple. 

Now suppose $G$ is a Lie group whose radical is simply connected and nilpotent. Then $G$ has no nontrivial compact connected 
central subgroup, and by Theorem 4.1 of \cite{SY19}, we have that $(7-10)$ are equivalent. Also, $(8)\implies (4)\implies (6)\implies (2)$ is
obvious. Now suppose $(2)$ holds. 
Suppose $T^n|_{\Ga'}=\Id$ for some $n\in\N$ and a subgroup $\Ga'$ of finite index in $\Ga$ as in (2). 
We replace $T$ by $T^n$ (and $\tau$ by $\tau^n)$ for simplicity and assume the $\tau|_{\Ga'}=T|_{\Ga'}=\Id$, where 
$\Ga'$ is also a lattice in $G$. 

Let $K$ be the maximal compact connected normal subgroup of $G$ and let $N$ be the nilradical of $G$. By our assumption that the 
radical is simply connected and nilpotent, we have that $N$ is simply connected and $K\cap N=\{e\}$. We also have that 
$G=SN$, a Levi decomposition, where $S$ is a (semisimple) Levi subgroup of $G$. From our earlier assumption that 
$G$ is neither semisimple nor nilpotent, we have that both $S$ and $N$ are nontrivial. 
Now suppose the maximal compact connected normal subgroup of $S$ is normal in $G$. 

Suppose $K$ is trivial. Then from our assumption on $S$, it follows that $S$ has no nontrivial compact connected normal 
subgroup; equivalently,  it has no compact factors. By Corollary 8.28 of \cite{Ra72}, $\Ga'\cap N$ is a lattice in 
$N$ and $T$ acts trivially on it. By Corollary 3.9 of \cite{PaS20}, we get that $\tau$ acts trivially on
$N$. Therefore, it acts trivially on $\Ga' N$. Let $\pi:G\to G/N$ be the natural projection. By Corollary 8.28 of \cite{Ra72}, 
we have that $\pi(\Ga')$ is a lattice in $\pi(G)$. As $\pi(S)$ has no compact factors, by Theorem 3.16 of \cite{PaS20}, 
$\tau^n$ acts trivially on $\pi(G)=\pi(S)$ for some $n\in\N$. By Lemma~\ref{zgh}, $\tau^n$ acts trivially on $G/Z(N)$. We may again 
replace $\tau$ by $\tau^n$ and assume that $\tau$ acts trivially on $\Ga'N$ as well as on $G/Z(N)$. 

Note that $\tau(S)$ is also a Levi subgroup of $G$, and hence $\tau(S)=aSa^{-1}$ for some $a\in N$. Therefore, $\inn(a)^{-1}\circ\tau$ is
an automorphism of $S$. As $S$ is semisimple, there exists $m\in\N$ such that $(\inn(a^{-1})\circ\tau)^m|_S$ is an inner automorphism 
of $S$. As $\inn(N)$ is normal in $\Aut(G)$, there exist $x\in N$ and $y\in S$, $\tau^m(s)=xysy^{-1}x^{-1}$ for all $s\in S$. We first show that
$y\in Z(S)$. Without loss of any generality, we may replace $\tau$ by $\tau^m$ and assume that $\tau(s)=xysy^{-1}x^{-1}$ for all $s\in S$. 
As $\tau$ acts trivially on $G/Z(N)$, we get that for $s\in S$, $\tau(s)=xysy^{-1}x^{-1}=sz_s$ for some $z_s\in Z(N)$ which depends on $s$. 
This implies that $s^{-1}ysy^{-1}=s^{-1}x^{-1}sz_sx=s^{-1}x^{-1}sxz_s$, $s\in S$. Therefore, $s^{-1}ysy^{-1}\in S\cap N$ for all $s\in S$. 
Since $S$ is connected and $S\cap N$ is a discrete subgroup of $S$, we get that $sy=ys$, $s\in S$, and hence 
$\tau|_S=\inn(x)|_S$ for some $x\in N$. 

Now $\tau(s)=xsx^{-1}=sz_s\in s\,Z(N)$, $s\in S$. We show that $x\in Z_G(S)$, which would imply that 
$\tau|_S=\Id$. If $x=e$, then this is obvious. Suppose $x\ne e$. For $g\in \Ga'$, if $g=g_sg_n$, for some $g_s\in S$ and $g_n\in N$, then 
$g_s\in \Ga'N$ and hence $g_s=\tau(g_s)=xg_sx^{-1}$. 

Observe that $S\cap N$ is a discrete central subgroup of $S$, and hence it contains a subgroup of finite index which is central in $G$. 
Since $N/Z(N)$ is simply connected, we get that $S\cap N\subset Z(N)$, and hence $S\cap N\subset Z(G)$. We know that $G/N$
is isomorphic to $S/(S\cap N)$ which is a connected semisimple Lie group without compact factors. Moreover, $\Ga N/N$ is a lattice in
$G/N$.  Let $\xi$ denote the canonical isomorphism from $G/N$ to $S/(S\cap N)$, and let 
$\pi'=\xi\circ\pi$, where $\pi:G\to G/N$ is as above. If $g=g_sg_n$, for some $g_s\in S$ and $g_n\in N$, then 
$\pi(g)=\pi(g_s)=g_sN$ and $\pi\circ\xi(g)=\xi(g_sN)=g_s(S\cap N)$. As $\pi(\Ga)$ is a lattice in $G/N$, we have that 
$\pi'(\Ga)$ is a lattice in $\pi'(S)$, where 
$\pi'(\Ga)=\{\ga_s(S\cap N)\mid \ga_s\in S\hbox{ and }\ga_s\ga_n\in\Ga\hbox{ for some }\ga_n\in N\}$. 
Now $\pi'(\Ga')$, being a subgroup of finite index in $\pi'(\Ga)$, is also a lattice in $\pi'(S)$.

Let $\NN$ be the Lie algebra of $N$ contained in the Lie algebra $\Gc$ of $G$. Since $N$ is simply connected and nilpotent, 
the exponential map restricted $\NN$ is a homeomorphism onto $N$ with $\log: N\to \NN$ as its inverse (see \cite{Va84}). 
We now define an action of $\pi'(S)=S/(S\cap N)$ on $\NN$. 
 Let $\rho:S/(S\cap N)\to \GL(\NN)$ be defined as follows: for $s\in S$, let $\rho(s(S\cap N))=\Ad(s)|_\NN$, $s\in S$. 
Since $S\cap N$ is central in $N$, we get that $\rho$ is well defined on $\pi'(S)$ 
and it is a representation of $\pi'(S)$ which is a connected semisimple Lie group without compact factors. 
Let $v_x=\log x$ for $x$ as above. 
Let $\ga\in\Ga'$. Then $\pi'(\ga)=\pi'(\ga_s)\in \pi'(\Ga')$ for some $\ga_s\in S$ and $x\ga_sx^{-1}=\ga_s$. Now 
 $$
 \rho(\ga_s(S\cap N))v_x=\Ad(\ga_s)v_x=\log(\ga_sx\ga_s^{-1})=\log(x)=v_x.$$
 Since $\pi'(\Ga')$ is a lattice in $\pi'(S)$, by the Borel density theorem, we get that $\rho(\pi'(s))(v_x)=v_x$ for all $s\in S$. This 
implies that $sxs^{-1}= x$ for all $s\in S$, and hence that $\tau|_S=\Id$. Since $\tau|_N=\Id$ and $G=SN$, we get that $\tau=\Id$. 
Therefore, $(11)$ holds. That is, $(2)\implies (11)$ in the case at hand. 
Since $(11)\implies (10)\implies (6)\implies (1)$, and $(1-2)$, $(3-6)$ as 
well as $(7-10)$ are equivalent, we have that $(1-11)$ are equivalent if a Levi subgroup of $G$ has no 
compact factors and the radical of $G$ is simply connected and nilpotent.

Now suppose $K$ is nontrivial. It is semisimple as $K\cap N=\{e\}$. From our assumption on $S$, 
it follows that $K$ is the product of all compact (simple) factors of the Levi subgroup $S$. Then either $S=K$ or 
$S=KS'=S'K$ where $S'$ is a product of all non-compact simple factors of $S$. Moreover $G=KN$ or $G=KS'N$, 
where $N$ is nontrivial. It is enough to show that $(10)$ holds. 

Let $\psi:G\to G/K$ be the natural projection. Since $\tau(K)=K$, we have the corresponding action of $\tau$ on $G/K$. 
Then $\psi(G)$ has no nontrivial compact connected normal subgroup, $\psi(\Ga')$ is a lattice in $\psi(G)$ and 
we get from (2) that $\tau^n$ acts trivially on $\psi(\Ga')$. Either $\psi(G)=\psi(N)$ or
$\psi(G)=\psi(S'N)$. Note that $\psi(N)$ is simply connected as $N$ is so. If $G=KN$, then we have by Corollary 3.9 of \cite{PaS20} 
 that $\tau^n$ acts trivially on $\psi(G)=\psi(N)$. If $G=KS'N$, then $\psi(S')$ is a Levi subgroup of $\psi(G)$ and it has no compact 
 factors. From above, we have that $\tau^n$ acts trivially on $\psi(G)$ for some $n\in\N$. Therefore, in either case, 
$\tau^n(x)\in xK$, for all $x\in G$. This implies that, for any $x\in N$, $x^{-1}\tau^n(x)\in K\cap N=\{e\}$. 
That is, $\tau^n$ acts trivially on $N$. As $K\subset S$, it follows that $\tau^n(S)=S$. Suppose $S'$ is nontrivial. Then $\tau^n(S')=S'$. 
Since $S'\cap K$ is a finite central subgroup in $S'$ and $S'$ is connected, it follows that for all $x\in S'$, $x^{-1}\tau^n(x)$ is 
contained in the connected component of the identity in $S'\cap K$ which is trivial. Therefore, $\tau^n(x)=x$ for all $x\in S'$, and 
hence $\tau^n$ acts trivially on $S'N$ which is a co-compact normal subgroup of $G$. 

Since $K$ is a compact connected semisimple Lie group, its automorphism group contains the group of inner automorphisms of 
$K$ as a subgroup of finite index, and hence it is compact. Moreover, as $K$ is normal and $N$ is simply connected, elements of $K$ 
centralise $N$, and also $S'N$ if $S'$ is nontrivial. Note that $G=KN$ (resp.\ $G=KS'N$) and 
$\tau$ acts trivially on $N$ (resp.\ $S'N$). Since $\tau(K)=K$ and $\tau^n|_{N}=\Id$ and $\tau^n|_{S'}=\Id$ if $S'$ is nontrivial, 
replacing $n$ by its multiple in $\N$, we have that $\tau^n$ is an inner automorphism of $G$ by an element of $K$. 
Therefore, $\tau$ generates a relatively compact group in $\Aut(G)$. That is, $(2)\implies (10)$, and hence $(1-10)$ are equivalent. 
\end{proof}

Now we illustrate by examples that Theorem~\ref{lattice-distal} is the best possible result for lattices in a connected Lie group. If $G$ is a nontrivial
compact connected semisimple Lie group, then the trivial group is a lattice which is invariant under any automorphism of $G$ and 
$\Aut(G)$ is a nontrivial compact group containing elements of infinite order. Hence for such a $G$, $(10)$ above holds but $(11)$ can not 
hold in general. 

Example 3.11 in \cite{PaS20} shows that a simply connected solvable Lie group $G$ can admit a lattice $\Ga$ and an 
automorphism $\tau\in\Aut(G)$ such that $\tau$ keeps $\Ga$ invariant, $\tau|_\Ga\in\NC$ but $\tau$ does not act distally on 
$\Sub^c_\Ga$. This illustrates that in Theorem~\ref{lattice-distal}, neither $(1-6)$ nor $(1-10)$ are equivalent in general. If 
$G=\TT^d$, $d\geq 2$, then its lattices are finite and any automorphism of $G$ keeps the finite group 
$G_n=\{g\in G\mid g^n=e\}$ invariant for any fixed $n\in\N$. Note that $\Aut(G)$ is isomorphic to $\GL(d,\Z)$ and, 
by Selberg's Lemma, it admits a subgroup of finite index which is torsion-free. Hence for such a $G$, 
(6) above holds for any lattice but (10) or (11) can not hold in general. 

Now we give an example of a class of groups $G=K\ltimes\R^d$, $d\geq 3$, where $K$ is any nontrivial compact connected 
subgroup of $\GL(d,\R)$ and $G$ admits an automorphism $\tau$ and a lattice $\Ga$ such that $\tau|_\Ga=\Id$
but $\tau$ does not generate a relatively compact group in $\Aut(G)$, hence $(1-6)$ of Theorem~\ref{lattice-distal} hold in this case, 
but none of $(7-11)$ holds. The group $G$ as above has compact or trivial Levi subgroups, and it is solvable if 
$K$ is abelian. Example \ref{ex1} together with the examples mentioned above illustrate that the conditions in Theorem~\ref{lattice-distal} 
that the radical is simply connected and nilpotent and the maximal compact connected normal subgroup 
of a Levi subgroup is normal in the whole group are necessary for the equivalence of $(1-10)$.  

\begin{example} \label{ex1}
Let $G=K\ltimes\R^d$, for some $d\geq 3$, and let $K$ be any nontrivial compact connected subgroup of $\GL(d,\R)$, where 
the group operation is given by $(k_1,x_1)(k_2,x_2)=(k_1k_2, k_2^{-1}(x_1)+x_2)$. Let 
$\Ga=\Z^d\subset\R^d$. Since $K$ is compact, $\Ga$ is a lattice in $G$. Choose $k\in K$ and $z\in \R^d$ such that $k(z)\ne z$.
Let $\tau=\inn(z)$, the inner automorphism by $z$, i.e.\ $\tau(g)=zgz^{-1}$ for all $g\in G$. Then $\tau$ acts trivially on $\Ga$. 
Now we prove that the closed subgroup generated by $\tau$ is noncompact in $\Aut(G)$. It is enough to 
show that $\{\tau^n(k)\mid n\in\N\}$ is unbounded. Note that $\tau^n(k)=(k, nk^{-1}(z) - nz)=(k, n(k^{-1}(z) - z))=(k,ny)$, where
$y=k^{-1}(z) - z\in \R^d$. Since $k(z)\ne z$, $y\ne 0$. Therefore, $\{ny\mid n\in\N\}$ is unbounded, and hence
$\{\tau^n(k)\mid n\in\N\}$ is unbounded. Note that $K$ could be chosen to be abelian or semisimple and it is not normal in $G$.
Here $(1-6)$ of Theorem~\ref{lattice-distal} hold but none of the $(7-11)$ of the same theorem holds. 
\end{example}

\section{Expansive actions of automorphisms on Sub$_{\mathbf \Ga}$ of lattices ${\mathbf \Ga}$ in Lie groups}

In this section, we study expansive actions of automorphisms of $G$ on $\Sub^c_G$ for a certain class of discrete groups $G$ 
which include discrete polycyclic groups. We also show that a lattice $\Ga$ in a connected noncompact Lie group does not 
admit any automorphism which acts expansively on $\Sub^c_\Ga$. 

  For a discrete group $G$ with the property that the set $R_g$ of roots of $g$ in $G$ is finite for every $g\in G$, 
  $\Sub^c_G$ is closed in $\Sub_G$ (cf.\ \cite{PaS20}, Lemma 3.4). For such a group $G$, it is shown in Proposition 3.8 of \cite{PaS20} 
  that only finite order automorphisms act distally on $\Sub^c_G$, in case $G$ is finitely generated. Here, we study the expansivity 
  of actions of automorphisms of $G$ on $\Sub^c_G$ in the following. 
  
\begin{lemma}\label{nbd}
Let $G$ be a discrete group with the property that the set $R_g$ of roots of $g$ in $G$ is finite for all $g\in G$. Then the complement 
of any neighbourhood of $\{e\}$ in $\Sub^c_G$ is finite. Moreover, $G$ does not admit any
automorphism which acts expansively on $\Sub^c_G$ unless $G$ is finite. 
\end{lemma}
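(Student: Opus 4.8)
The plan is to prove the two assertions in turn, the second resting on the first together with the finiteness of roots.

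For the first assertion, I would note that in the Chabauty topology a basic neighbourhood of the trivial subgroup $\{e\}$ has the form $O_F=\{A\in\Sub_G\mid A\cap F=\emptyset\}$ for a finite set $F\subseteq G$ with $e\notin F$: for discrete $G$ compact sets are finite, and the sub-basic sets $O_U$ with $e\in U$ are all of $\Sub_G$. Hence it suffices to show that $\{G_x\in\Sub^c_G\mid G_x\cap F\neq\emptyset\}$ is finite. If $G_x\cap F\neq\emptyset$ then $x^m=f$ for some $f\in F$ and some $m\neq 0$, so the generator $x$ lies in $R_f\cup R_f^{-1}$ (where $R_f^{-1}=\{y^{-1}\mid y\in R_f\}$); since $F$ is finite and each $R_f$ is finite by hypothesis, there are only finitely many possible generators, and hence finitely many such $G_x$. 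As every neighbourhood of $\{e\}$ contains some $O_F$, its complement in $\Sub^c_G$ is finite. Equivalently, each nontrivial $g$ lies in only finitely many cyclic subgroups, namely those generated by elements of $R_g\cup R_g^{-1}$.

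For the second assertion I would argue by contradiction: suppose $G$ is infinite and $T\in\Aut(G)$ acts expansively on $\Sub^c_G$, which is compact since it is closed in $\Sub_G$. Fix a compatible metric $d$, let $\delta>0$ be an expansive constant, and choose a finite set $F'\subseteq G\setminus\{e\}$ so large that $A\cap F'=B\cap F'$ forces $d(A,B)\le\delta$ (possible since two subgroups are $d$-close once they agree on all nontrivial elements of a long initial segment of an enumeration of $G$). Set $S=\bigcup_{f\in F'}(R_f\cup R_f^{-1})$, a finite set by the root hypothesis. For any nontrivial $x$, applying expansivity to the distinct pair $(G_x,\{e\})$ gives some $n$ with $d(T^nG_x,\{e\})>\delta$, whence $T^nG_x\cap F'\neq\emptyset$; thus some power $(T^nx)^k=f\in F'$ with $k\neq 0$, and therefore $T^nx\in R_f\cup R_f^{-1}\subseteq S$. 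Consequently every nontrivial element of $G$ lies in $\bigcup_{u\in S}\{T^ju\mid j\in\Z\}$, so $G\setminus\{e\}$ is covered by only finitely many $T$-orbits.

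It remains to see that finitely many $T$-orbits on $G\setminus\{e\}$ is incompatible with $G$ being infinite and having finite roots, and this is the crux. The main (and clean) case is when $G$ has an element $a$ of infinite order: the infinitely many distinct elements $a^{2^j}$, $j\ge 0$, fall into finitely many orbits, so infinitely many of them lie in one orbit, which yields integers $p_i\neq 0$ with $T^{p_i}(h)=h^{\pm 2^{c_i}}$, $c_i\ge 1$, for $h=a^{2^{j_1}}$. Replacing $T$ by $T_1=T^{2p_2}$ (still an automorphism, and a nonzero power) absorbs the sign and gives $T_1(h)=h^{Q}$ with an integer $Q\ge 2$. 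Then $w_s:=T_1^{-s}(h)$ satisfies $w_s^{Q^s}=h$ for all $s\ge 0$, and the $w_s$ are pairwise distinct because $h$ has infinite order (if $w_s=w_{s'}$ with $s<s'$ then $h=w_s^{Q^s}=w_s^{Q^{s'}}$ would force $Q^{s'}=Q^{s}$). Thus $h$ has infinitely many roots, contradicting $R_h$ finite. I expect the main obstacle to be the remaining torsion case, where the finitely many orders force bounded exponent; here I would take an infinite orbit of order-$m$ elements, pass to its prime-order socle $c=b^{m/p}$, and split on whether the orbit of $c$ is finite or infinite. If it is finite, then infinitely many of the distinct subgroups in the original orbit share the fixed nontrivial element $c$, contradicting the first assertion; if it is infinite, one must exhibit infinitely many distinct $T$-orbits (for instance among the elements $c\,T^k(c)$ inside a suitable abelian $T$-invariant section), contradicting finiteness of the orbit set. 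In every case one reaches a contradiction, so no such $T$ exists unless $G$ is finite; the root-tower $w_s$ is the device I regard as the heart of the argument.
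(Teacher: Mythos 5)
Your proof of the first assertion is correct and in fact more direct than the paper's: the paper passes to a convergent subsequence in the compact space $\Sub^c_G$ and analyses the limit group, whereas you only need the description of the Chabauty sub-basis for a discrete group (every neighbourhood of $\{e\}$ contains some $O_F$ with $F$ finite and $e\notin F$) together with the observation that a cyclic subgroup meeting $F$ must be generated by an element of the finite set $\bigcup_{f\in F}(R_f\cup R_f^{-1})$. Your main case of the second assertion is also correct, and it takes a genuinely different route from the paper. The paper fixes a single element $x$ of infinite order, uses expansivity and the first assertion to pigeonhole the subgroups $T^{n_k}(G_{x^k})$ inside the finite complement of a neighbourhood of $\{e\}$, extracts $m$ with $T^m(x)=x$, and concludes that $T^m$ has infinitely many fixed points in $\Sub^c_G$, contradicting the fact that an expansive homeomorphism of a compact metric space has only finitely many fixed points (Theorem 5.26 of \cite{Wa82}). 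You instead cover $G\setminus\{e\}$ by finitely many $T$-orbits and reach a purely algebraic contradiction via the root tower $w_s=T_1^{-s}(h)$ with $w_s^{Q^s}=h$; this buys self-containedness (no external dynamical theorem), at the cost of the compactness argument producing your separating set $F'$, which is legitimate. (A minor simplification: two elements in the same $T$-orbit are equal under a power of $T$, so $T^{p_2}(h)=h^{2^{c_2}}$ exactly; the $\pm$ sign and the passage to $T^{2p_2}$ are unnecessary.)

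The one genuine defect is the torsion case, which you call the crux and leave unfinished: in the subcase where the orbit of the prime-order element $c$ is infinite, you only gesture at a construction and produce no contradiction. But this entire case is vacuous, and noticing this is exactly how the paper dispenses with it: applying the hypothesis to $g=e$, the set $R_e=\{y\in G\mid y^n=e\ \mbox{for some}\ n\in\N\}$ is precisely the set of torsion elements of $G$, and it is finite. Hence an infinite $G$ satisfying the hypothesis automatically contains an element of infinite order, and your main case is the only case; indeed your own remark that the torsion case forces bounded exponent already shows it would make $R_e$ infinite. With this one-line observation inserted in place of your case analysis, your proof is complete and correct.
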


\begin{proof} If $G$ is finite, then $\Sub_G$ is finite, and the first assertion holds trivially and any automorphism of $G$ acts 
expansively on $G$. Now suppose $G$ is infinite. 
If possible, suppose there are infinitely many elements outside some open neighbourhood $U$ of $\{e\}$ in $\Sub^c_G$ namely, 
$G_{x_n}\notin U$, $x_n\neq x_m$ for all $m, n\in\N$. Note that $\Sub^c_G$, being closed in $\Sub_G$, is compact. 
Passing to a subsequence if necessary, we get that $G_{x_n}\to G_x$ for some $x\in G$, as $n\to\infty$. Then $G_x\notin U$, 
and hence $x\ne e$. As $G$ is discrete, there exists $n_0\in\N$ such that $x\in G_{x_n}$ for all $n\geq n_0$. 
It follows that $x={x_n}^{m_n}$ for some $m_n\in\Z$ and for all $n\geq n_0$. Replacing $x$ by $x^{-1}$ if necessary, we
may assume that $m_n\in\N$ for infinitely many $n$.  This leads to a contradiction as $x_n$'s are distinct and $R_x$ is finite.
Hence, given any neighbourhood of $U$ of $\{e\}$ in $\Sub^c_G$, $\Sub^c_G\setminus U$ is finite. 

Let $T\in\Aut(G)$. If possible, suppose $T$ acts expansively on $\Sub^c_G$ with an expansive constant 
$\epsilon>0$. Let $U=\{H\in\Sub^c_G\mid \db(H,\{e\})<\epsilon\}$,where $\db$ is the metric on $\Sub_G$. 
Since $G$ is infinite and $R_e$ is finite, there exists $x\in G$ which generates an infinite cyclic group. For every $k\in\N$, 
there exists $n_k\in\Z$ such that $\db(T^{n_k}(G_{x^k}),\{0\})>\epsilon$. Since $\Sub^c_G\setminus U$ is finite, we have that 
$T^{n_k}(G_{x^k})=T^{n_l}(G_{x^l})$ for infinitely many $k$ and $l$ with $k\ne l$. Here, $n_k\ne n_l$ if $l\ne k$, as $x$ has 
infinite order. Let $y=x^k$ for some fixed $k$. Then $y=T^{n_l-n_k}(x)^l$ for infinitely many $l$. 
Since $R_y$ is finite, we get that $\{T^{n_l-n_k}(x)\}_{l\in\N}$ is finite.  As each $n_l\ne n_j$, if $l,j\in\N$ and $l\ne j$, 
there exists $m\in\N$ such that $T^m(x)=x$. Now $T^m(G_{x^k})=G_{x^k}$ for infinitely many $k$. That is, $T^m$ has 
infinitely many fixed points in $\Sub^c_G$. This leads to a contradiction, due to Theorem 5.26 of \cite{Wa82}.
Therefore, $T$ is not expansive. 
\end{proof}

Note that Lemma~\ref{nbd}, in particular, implies that any discrete finitely generated infinite nilpotent group $G$ does not admit any 
automorphism that acts expansively on $\Sub^c_G$, as the set of roots of $g$ is finite for every $g\in G$. Such groups $G$ form a 
proper subclass of (discrete) polycyclic groups. If $G$ is any discrete polycyclic group, then every subgroup 
of it is finitely generated, and by Lemma 3.3 of \cite{PaS20}, $\Sub^c_G$ is closed in $\Sub_G$. 
The following theorem shows that such a $G$ does not admit any automorphism which acts expansively on $G$, 
unless $G$ is finite. The theorem will be useful in the proof of  Theorem~\ref{main-expa}. As noted before, the class of polycyclic 
groups is strictly larger than that of lattices in connected solvable Lie groups.

\begin{theorem}\label{poly-e}
Let $G$ be an infinite discrete polycyclic group and let $T\in\Aut(G)$. Then the $T$-action on $\Sub^c_G$ is not expansive. 
In particular, this holds when $G$ is any discrete solvable subgroup of a connected Lie group. 
\end{theorem}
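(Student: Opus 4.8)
The plan is to reduce the general polycyclic case to the finitely generated nilpotent case already settled by Lemma~\ref{nbd}. The bridge is the maximal nilpotent normal subgroup $G_\nil$ (the Fitting subgroup): it is characteristic in $G$, hence $T$-invariant, and I claim it is infinite whenever $G$ is. Granting this, the cyclic subgroups of $G_\nil$ form a $T$-invariant subspace of $\Sub^c_G$ on which an expansive $T$ would restrict expansively, contradicting Lemma~\ref{nbd}.

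First I would establish that $G_\nil$ is infinite. Since $G$ is infinite and polycyclic, it admits a finite-index strongly polycyclic subgroup; passing to its normal core (again strongly polycyclic, and hence torsion-free) yields an infinite torsion-free normal subgroup $N$ of finite index. Let $A$ be the last nontrivial term of the derived series of $N$. Then $A$ is abelian and characteristic in $N$, hence normal in $G$; being nontrivial and torsion-free it is infinite; and being abelian it is nilpotent, so $A\subset G_\nil$. Thus $G_\nil$ is infinite.

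Next I would run the reduction. As $G_\nil$ is a finitely generated nilpotent group, the set of roots $R_g$ is finite for every $g\in G_\nil$, so Lemma~\ref{nbd} applies to $G_\nil$ and shows that no automorphism of $G_\nil$ acts expansively on $\Sub^c_{G_\nil}$. Now the cyclic subgroups of $G_\nil$ are exactly the members of $\Sub^c_G$ contained in $G_\nil$; since the Chabauty convergence criterion for discrete groups refers only to intersections and unions taken inside the relevant group, the subspace topology on $\Sub^c_{G_\nil}\subset\Sub^c_G$ agrees with its intrinsic Chabauty topology. Because $G_\nil$ is $T$-invariant, $\Sub^c_{G_\nil}$ is a $T$-invariant subspace of $\Sub^c_G$, and the restricted action is precisely that of $T|_{G_\nil}\in\Aut(G_\nil)$ on $\Sub^c_{G_\nil}$. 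Hence, if $T$ acted expansively on $\Sub^c_G$, then $T|_{G_\nil}$ would act expansively on $\Sub^c_{G_\nil}$, contradicting Lemma~\ref{nbd} as $G_\nil$ is infinite. Therefore $T$ does not act expansively on $\Sub^c_G$. The final assertion then follows from Lemma~\ref{solvable-Lie}: a discrete solvable subgroup $H$ of a connected Lie group has $H^0=\{e\}$, so $H=H/H^0$ is polycyclic, and the conclusion for infinite such $H$ is exactly what was just proved.

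I expect the main obstacle to be the first step, namely verifying that $G_\nil$ is genuinely infinite for an infinite polycyclic group; the remainder is a routine application of the behaviour of expansivity on invariant subspaces together with Lemma~\ref{nbd}. The care needed there is to produce a \emph{torsion-free} normal subgroup of finite index before extracting an infinite nilpotent normal subgroup, since otherwise the last derived term could a priori be finite.
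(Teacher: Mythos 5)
Your proof is correct, and it differs from the paper's in a way worth noting. The paper's argument restricts the dynamics step by step: it first passes to a $T$-invariant finite-index strongly polycyclic subgroup (the $T$-invariance has to be arranged, e.g.\ by intersecting the finitely many subgroups of a given index in a finitely generated group), then walks down the derived series of that subgroup to its last nontrivial term, an infinite finitely generated torsion-free abelian group isomorphic to $\Z^n$, and applies Lemma~\ref{nbd} there. You instead use the finite-index strongly polycyclic subgroup and its last derived term purely group-theoretically, only to certify that the Fitting subgroup $G_\nil$ is infinite; the single dynamical restriction is then to $G_\nil$, which is characteristic in $G$ and hence invariant under \emph{every} automorphism, so no $T$-invariance bookkeeping is needed at all. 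The dynamical input you invoke --- that an infinite finitely generated nilpotent group admits no automorphism acting expansively on the (closed) space of its cyclic subgroups --- is exactly the remark the paper records immediately after Lemma~\ref{nbd}, so both proofs rest on the same two pillars: finiteness of root sets feeding Lemma~\ref{nbd}, and restriction of expansivity to invariant subspaces of a compact space. What the paper's route buys is a marginally more concrete endpoint ($\Z^n$ rather than a general finitely generated nilpotent group); what your route buys is the cleaner invariance argument, and your closing caution is well placed: producing a torsion-free (strongly polycyclic) normal subgroup before taking the last derived term is precisely what guarantees that term is infinite rather than a finite abelian group.
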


\begin{proof}
Let $G'$ be a subgroup of finite index in $G$ which is strongly polycyclic. Passing to a subgroup of finite index if necessary, 
we may assume that $G'$ is $T$-invariant. Now we may replace $G$ by $G'$ and assume that $G$ is strongly polycyclic. 
Let $G(0)=G$ and for $n\in\N\cup\{0\}$, let $G(n+1)=[G(n),G(n)]$ be the commutator subgroup of $G(n)$. Each $G(n)$ is a 
characteristic subgroup of $G$. Since $G$ is solvable, there exists $k\in\N\cup\{0\}$ such that $G(k)\ne\{e\}$ and $G(k+1)=\{e\}$.
Here, $G(k)$ is an infinite strongly polycyclic abelian $T$-invariant group. Therefore, replacing $G$ by $G(k)$ if necessary, 
we may assume that $G$ is abelian. Now $G$ is isomorphic to $\Z^n$ for some $n\in\N$. It is easy to see that 
the set $R_g$ of roots of $g$ is finite in $G=\Z^n$ (see also Example 3.1.12 in \cite{He77}). Hence by Lemma~\ref{nbd}, 
the $T$-action on $\Sub^c_G$ is not expansive. 

Every discrete solvable subgroup of a connected Lie group is polycyclic by Lemma~\ref{solvable-Lie}\,(2), hence the second assertion 
follows from the first. \end{proof}

Recall that for a locally compact metrizable group $G$, $\Sub^a_G$ is the set of all closed abelian subgroups of $G$. It is closed in 
$\Sub_G$. The following result is already known for all connected Lie groups (cf.\  \cite{PrS20}, Theorem 3.1). Combining it with
Theorem~\ref{poly-e}, we get the following generalisation. 

\begin{corollary} \label{cor1}
Let $G$ be an infinite Lie group and let $T\in\Aut(G)$. If $G/G^0$ is polycyclic, then the $T$-action on $\Sub^a_G$ is not 
expansive. In particular, if $G$ is a closed subgroup of a connected Lie group $H$ such that $G$ is either solvable or normal in $H$, 
then the $T$-acton on $\Sub^a_G$ is not expansive.
\end{corollary}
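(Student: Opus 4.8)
The plan is to argue by contradiction, reducing everything to two results already available: Theorem~3.1 of \cite{PrS20} (a nontrivial connected Lie group carries no automorphism acting expansively on its space of abelian subgroups) and Theorem~\ref{poly-e} (an infinite discrete polycyclic group carries none acting expansively on $\Sub^c_G$). The only mechanism I need to connect them is the elementary principle from Section~2 that the restriction of an expansive homeomorphism of a compact space to a nonempty closed invariant subspace is again expansive. So I would assume that $T$ acts expansively on the compact metrizable space $\Sub^a_G$, exhibit a closed $T$-invariant subspace on which expansivity is impossible, and split the argument according to whether the identity component $G^0$ is trivial.

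First I would treat the case $G^0\neq\{e\}$. Since $G^0$ is characteristic in $G$ it is $T$-invariant, so $T|_{G^0}\in\Aut(G^0)$ and $G^0$ is a nontrivial connected Lie group. Using that $G^0$ is closed together with the convergence criterion of Section~2, the set $\{H\in\Sub_G\mid H\subseteq G^0\}$ is closed in $\Sub_G$; intersecting it with $\Sub^a_G$ identifies $\Sub^a_{G^0}$ with a closed, $T$-invariant subspace of $\Sub^a_G$. Expansivity of $T$ on $\Sub^a_G$ would then force $T|_{G^0}$ to act expansively on $\Sub^a_{G^0}$, contradicting Theorem~3.1 of \cite{PrS20}.

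Next I would treat the case $G^0=\{e\}$, i.e.\ $G$ discrete, so that $G=G/G^0$ is an infinite polycyclic group. Then every subgroup of $G$ is finitely generated, so $\Sub^c_G$ is closed in $\Sub_G$ by Lemma~3.3 of \cite{PaS20}; as a closed $T$-invariant subset of the closed set $\Sub^a_G$ it would inherit an expansive $T$-action, contradicting Theorem~\ref{poly-e}. This completes the main assertion.

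For the ``in particular'' statement it suffices to verify the hypothesis $G/G^0$ polycyclic and invoke the part just proved. If $G$ is solvable this is immediate from Lemma~\ref{solvable-Lie}\,(2). If instead $G$ is normal in the connected group $H$, then $G^0$, being characteristic in $G$, is normal in $H$, and passing to the connected Lie group $H/G^0$ I would observe that $G/G^0$ is a discrete closed normal subgroup; hence it is central in $H/G^0$ and lies in $Z(H/G^0)$. Since the center of a connected Lie group and all its closed subgroups are compactly generated (cf.\ \cite{Mos67}), $G/G^0$ is a finitely generated abelian group, which is polycyclic. I expect this last verification in the normal case to be the main obstacle, since it hinges on the reduction modulo $G^0$ and the twin facts that the component group is central in $H/G^0$ and that closed subgroups of the center are compactly generated; by contrast the solvable case and the central dichotomy on $G^0$ are routine once the restriction principle is in hand.
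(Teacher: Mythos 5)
Your proposal is correct and follows essentially the same route as the paper: split on whether $G^0$ is trivial, apply Theorem~3.1 of \cite{PrS20} to the closed invariant subspace $\Sub^a_{G^0}$ when $G^0\neq\{e\}$, reduce the discrete case to the polycyclic result, and for the ``in particular'' part verify that $G/G^0$ is polycyclic via Lemma~\ref{solvable-Lie}\,(2) in the solvable case and via centrality of the discrete normal subgroup $G/G^0$ in $H/G^0$ in the normal case. The only cosmetic difference is that in the normal case you invoke Moskowitz's compact-generation of central subgroups directly, where the paper simply cites Lemma~\ref{solvable-Lie}\,(2); both yield that $G/G^0$ is finitely generated abelian, hence polycyclic.
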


\begin{proof} Let $G^0$ be the connected component of the identity in $G$. Then $G^0$ is a closed $T$-invariant subgroup of $G$, 
and hence it is a Lie group. If $G^0$ is nontrivial, by Theorem 3.1 of \cite{PrS20}, the $T$-action on $\Sub^a_{G^0}$, and hence on 
$\Sub^a_G$ is not expansive. If $G^0$ is trivial, then $G$ is discrete. If $G/G^0$ is polycyclic, i.e.\ $G$ is polycyclic, by Lemma~\ref{nbd}, 
the $T$-action on $\Sub^c_G$, and hence, on $\Sub^a_G$ is not expansive. 

Suppose $G$ is a closed subgroup of a connected Lie group $H$. If $G$ is solvable, then by Lemma~\ref{solvable-Lie}\,(2), 
$G/G^0$ is polycyclic. If $G$ is normal in $H$, then so is $G^0$, and $G/G^0$, being a discrete normal subgroup of the connected 
Lie group $H/G^0$, is central in $H/G^0$, and hence it is polycyclic by Lemma~\ref{solvable-Lie}\,(2). Therefore, in either
case, the second assertion follows from the first. 
\end{proof}

Now we are ready to prove one of the main results about expansivity related to automorphisms of lattices in connected 
Lie groups. Note that a lattice $\Ga$ in a compact Lie group is finite and hence all its automorphisms act expansively on $\Sub_\Ga$. 

\begin{theorem} \label{main-expa} A lattice $\Ga$ in a connected noncompact Lie group does not admit any automorphism which acts 
expansively on $\Sub^c_\Gamma$.
\end{theorem}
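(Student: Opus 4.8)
The plan is to argue by contradiction: assume $T\in\Aut(\Ga)$ acts expansively on $\Sub^c_\Ga$ and produce an infinite $T$-invariant subgroup $H\subset\Ga$ to which one of the two non-expansivity results, Theorem~\ref{poly-e} or Lemma~\ref{nbd}, applies. The principle I would use throughout is that for any $T$-invariant subgroup $H$ of the discrete group $\Ga$, the set $\Sub_H$ sits inside $\Sub_\Ga$ as a closed subset (for discrete groups a Chabauty limit of subgroups contained in $H$ is again contained in $H$), so $\Sub^c_H=\Sub_H\cap\Sub^c_\Ga$ is a closed $T$-invariant subspace of the compact space $\Sub^c_\Ga$ (which is closed by Corollary~\ref{cyc-cl}); moreover the homeomorphism of $\Sub^c_H$ induced by $T|_H$ is precisely the restriction of the one induced by $T$. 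Since $G$ is noncompact, $\Ga$ is infinite, and since expansivity passes to invariant subspaces (Section 2), it suffices to find such an $H$ that is infinite and either polycyclic or has the property that $R_g$ is finite for every $g\in H$.

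First I would examine $\Ga_\rad$, which is characteristic, hence $T$-invariant, and polycyclic by Proposition~\ref{lattice-rad}$(a)$. If $\Ga_\rad$ is infinite, then $T|_{\Ga_\rad}$ acts expansively on the closed invariant subspace $\Sub^c_{\Ga_\rad}$, contradicting Theorem~\ref{poly-e}. Hence I may assume $\Ga_\rad$ is finite, so that $\Ga/\Ga_\rad$ is infinite.

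In this remaining case the goal is an infinite $T$-invariant finite-index subgroup $\Lam$ of $\Ga$ with $R_g$ finite for every $g\in\Lam$; then Lemma~\ref{nbd} applied to $\Lam$ yields the contradiction. By Proposition~\ref{quo-poly}$(2)$ the group $\Ga/\Ga_\rad$ has a finite-index (torsion-free) subgroup $\Ga'$ in which every element has finitely many roots. Let $\eta:\Ga\to\Ga/\Ga_\rad$ be the projection and put $D=\eta^{-1}(\Ga')$, a finite-index subgroup containing $\Ga_\rad$. Since $\Ga$ is finitely generated there are only finitely many subgroups of index $[\Ga:D]$, so the orbit $\{T^i(D)\}_{i\in\Z}$ is finite and $\Lam:=\bigcap_{i\in\Z}T^i(D)$ is a $T$-invariant finite-index subgroup containing the characteristic subgroup $\Ga_\rad$; its image $\Lam/\Ga_\rad$ is a subgroup of automorphic images of $\Ga'$ and so again has the finite-root property. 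The crucial point is that finiteness of roots lifts from $\Lam/\Ga_\rad$ to $\Lam$: for $g\in\Lam$ the map $\eta$ sends $R_g$ into the finite set $R_{\eta(g)}$ of roots of $\eta(g)$ in $\Lam/\Ga_\rad$, and each fibre of $\eta|_{R_g}$ lies in a coset of the finite group $\Ga_\rad$, so $|R_g|\le|\Ga_\rad|\cdot|R_{\eta(g)}|<\infty$. Thus $\Lam$ is infinite with all roots finite, and Lemma~\ref{nbd} gives the contradiction.

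The only delicate step is this last case. One cannot simply replace $\Ga$ by a torsion-free finite-index subgroup, since a general lattice in a connected Lie group need not be virtually torsion-free (or even residually finite). This is why I would route the finite-root property through the quotient $\Ga/\Ga_\rad$ via Proposition~\ref{quo-poly}$(2)$ and then lift it using only the finiteness of $\Ga_\rad$, together with the finite-orbit trick to restore $T$-invariance; keeping $\Ga_\rad\subset\Lam$ throughout is exactly what makes the lifting of roots (and the control of torsion, which all lies in $\Ga_\rad$) go through.
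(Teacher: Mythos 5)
Your proposal is correct and takes essentially the same route as the paper's own proof: force $\Ga_\rad$ to be finite via Theorem~\ref{poly-e}, pull back the finite-root subgroup of $\Ga/\Ga_\rad$ from Proposition~\ref{quo-poly}\,(2) to a finite-index subgroup $\Lam\supset\Ga_\rad$ of $\Ga$, lift the finite-root property using $|\Ga_\rad|<\infty$, arrange $T$-invariance of $\Lam$ by passing to a further finite-index subgroup, and conclude with Lemma~\ref{nbd}. The only difference is that you spell out two steps the paper leaves implicit, namely the finitely-many-subgroups-of-given-index argument for $T$-invariance and the counting bound $|R_g|\le|\Ga_\rad|\cdot|R_{\eta(g)}|$, both of which are valid.
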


\begin{proof} Let $T\in\Aut(\Gamma)$. Let $G$ be a connected noncompact Lie group in which $\Gamma$ is a lattice. 
By Proposition~\ref{lattice-rad}\,$(a)$, $\Ga$ admits a largest solvable normal subgroup $\Ga_\rad$ which is polycyclic. Note that $\Ga_\rad$ 
is characteristic in $\Ga$, and hence it is $T$-invariant.  

If possible, suppose $T$ acts expansively on $\Sub^c_\Ga$. Then $T|_{\Ga_\rad}$ acts expansively on $\Sub^c_{\Ga_\rad}$. 
By Theorem~\ref{poly-e}, $\Ga_\rad$ is finite. By Proposition~\ref{quo-poly}\,(2), $\Ga$ has a subgroup of finite index $\Lam$ 
containing $\Ga_\rad$ such that the set $R_g$ of roots of 
$g$ in $\Lam/\Ga_\rad$ is finite, for every $g\in\Lam/\Ga_\rad$. This, together with the fact that $\Ga_\rad$ is finite, implies that the 
set $R_g$ of roots of $g$ in $\Lam$ is finite for every $g\in\Lam$. Passing to a subgroup of finite index if necessary, we may 
assume that $\Lam$ is $T$-invariant. Since $T$ acts expansively on $\Sub^c_\Ga$, and hence
on $\Sub^c_\Lam$, from Lemma~\ref{nbd} we get that $\Lam$ is finite, and hence $\Ga$ is also finite. 
This leads to a contradiction as $G$ is noncompact (cf.\ \cite{Ra72}, Remark 5.2\,(2) and Lemma 5.4).  
Hence, $T$ does not act expansively on $\Sub^c_\Ga$. 
\end{proof}

\noindent{\bf Acknowledgements:} R.\ Palit would like to acknowledge the CSIR-JRF research
fellowship from CSIR, Govt.\ of India. M.\ B.\ Prajapati would like to
acknowledge the UGC-JRF research fellowship from UGC, Govt.\ of India. R.\ Shah would like to acknowledge the 
MATRICS research grant from DST-SERB, Govt.\ of India which partially supported R.\ Palit while this work 
was carried out at JNU, New Delhi.

\bigskip
\begin{flushleft}
Rajdip Palit rajdip1729@gmail.com \\[1mm]
Manoj Prajapati manoj.prajapati.2519@gmail.com \\[1mm]
Riddhi Shah riddhi.kausti@gmail.com, rshah@jnu.ac.in \\[4mm]
School of Physical Sciences\\ 
Jawaharlal Nehru University\\
New Delhi 110067, India
\end{flushleft}

\end{document}